\newtheorem{theorem}{Theorem}[section]
\newtheorem{proposition}{Proposition}[section]
\newtheorem{lemma}{Lemma}[section]
\newtheorem{corollary}{Corollary}[section]
\numberwithin{equation}{section}
\title[Inverse problems for Schr\"odinger equations]{Inverse problems for Schr\"odinger equations with unbounded potentials}
\thanks{The author is supported by the grant ANR-17-CE40-0029 of the French National Research Agency ANR (project MultiOnde).}
\author[Mourad Choulli]{Mourad Choulli}
\address{Universit\'e de Lorraine, 34 cours L\'eopold, 54052 Nancy cedex, France}
\email{mourad.choulli@univ-lorraine.fr}
\begin{document}

\begin{abstract}
We summarize in these notes the course given at the Summer School of AIP 2019 held in Grenoble from July 1st to July 5th. This course was mainly devoted to the determination of  the unbounded potential in a Schr\"odinger equation from the associated Dirichlet-to-Neumann map (abbreviated to DN map in this text). We establish a stability inequality for potentials belonging to $L^n$, where $n\ge 3$ is the dimension of the space. Next, we prove a uniqueness result  for potentials in $L^{n/2}$, $n\ge 3$, and apply this uniqueness result  to demonstrate a Borg-Levinson type theorem. 

We use a classical approach which is essentially based on the construction of the so-called complex geometric optic solutions (abbreviated to CGO solutions in this text).

\end{abstract}

\subjclass[2010]{35R30}

\keywords{Schr\"odinger equation, unbounded potential, Dirichlet-to-Neumann map, uniqueness, stability inequality, Borg-Levinson type theorem.}

\maketitle

\tableofcontents

\section{Preliminaries}

\subsection{Notations}

Throughout,  $\Omega$ denotes a bounded Lipschitz domain of $\mathbb{R}^n$, $n\ge 3$, with boundary $\Gamma$. 

The constant $c_\Omega$ will always denote a generic constant only depending on $n$ and $\Omega$ and all the Banach spaces we consider are complex. 

We define
\[
\underline{n}=\frac{2n}{n+2},\quad \overline{n}=\frac{2n}{n-2}
\]
and we observe that
\[
1<\underline{n}<2<\overline{n}.
\]

The duality pairing between a Banach space and its dual  is denoted by the symbol $\langle \cdot ,\cdot \rangle$. Finally, $[\cdot ,\cdot]$ denotes the usual commutator: $[A,B]=AB-BA$.

Let $(X,d\mu)=(\Omega ,dx)$ or $(X,d\mu)=(\Gamma ,dS(x))$, $dx$ is the Lebesgue measure on $\mathbb{R}^n$ and $dS(x)$ is the measure induced by the Lebesgue measure on $\Gamma$. 

For $f,g\in L^2(X)=L^2(X,d\mu )$ and $F,G\in L^2(X)=L^2((X,d\mu) ,\mathbb{C}^n)$, we use the usual notations
\begin{align*}
&(f,g)_{L^2(X)}=\int_X f\overline{g}d\mu,
\\
&(F,G)_{L^2(X)}=\int_X F\cdot \overline{G}d\mu,
\\
&\|f\|_{L^2(X)}=(f,f)_{L^2(X)}^{1/2},
\\
&\|F\|_{L^2(X)}=(F,F)_{L^2(X)}^{1/2}.
\end{align*}

It is well known that, according to Poincar\'e's inequality, $u\rightarrow \|\nabla u\|_{L^2(\Omega )}$ is a norm on $H_0^1(\Omega )$ which is equivalent to the $H^1(\Omega )$-norm. In all of this text, $\|u\|_{H_0^1(\Omega )}$ denotes indifferently one these two equivalent norms.

\subsection{Trace theorem}

We recall that $H^1(\Omega )$ is continuously embedded in $L^{\overline{n}}(\Omega )$:
\[
\|u\|_{L^{\overline{n}}(\Omega )}\le c_\Omega\|u\|_{H^1(\Omega )},\quad u\in H^1(\Omega ).
\]
We denote the norm of this embedding by $\mathfrak{e}_\Omega$.

\begin{lemma}\label{lemma-mult}
Let $V\in L^{n/2}(\Omega )$ and denote by $\mathfrak{b}_V$ the sesquilinear  form defined by
\[
\mathfrak{b}_V(u,v)=\int_\Omega Vu\overline{v}dx,\quad u,v\in H^1(\Omega ).
\]
Then $\mathfrak{b}$ is bounded with
\begin{equation}\label{pr1}
|\mathfrak{b}_V(u,v)|\le\mathfrak{e}_\Omega^2 \|V\|_{L^{n/2}(\Omega )}\|u\|_{H^1(\Omega )}\|v\|_{H^1(\Omega )},\quad u,v\in H^1(\Omega ).
\end{equation}
Moreover, for any $u\in H^1(\Omega)$, $\ell_V(u):v\in H_0^1(\Omega )\mapsto \mathfrak{b}_V(u,v)$ belongs to $H^{-1}(\Omega )$ with
\begin{equation}\label{p2}
\|\ell_V(u)\|_{H^{-1}(\Omega )}\le \mathfrak{e}_\Omega^2 \|V\|_{L^{n/2}(\Omega )}\|u\|_{H^1(\Omega )}.
\end{equation}
\end{lemma}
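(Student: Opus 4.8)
The plan is to read off (\ref{pr1}) from the generalized H\"older inequality combined with the Sobolev embedding $H^1(\Omega)\hookrightarrow L^{\overline{n}}(\Omega)$ recalled just before the statement. First I would check that the triple of exponents $n/2,\ \overline{n},\ \overline{n}$ is H\"older-admissible: since $\overline{n}=\frac{2n}{n-2}$ one has $\frac{2}{\overline{n}}=\frac{n-2}{n}$, hence
\[
\frac{2}{n}+\frac{1}{\overline{n}}+\frac{1}{\overline{n}}=\frac{2}{n}+\frac{n-2}{n}=1 .
\]
Consequently, for $u,v\in H^1(\Omega)$ the function $Vu\overline{v}$ is integrable and
\[
|\mathfrak{b}_V(u,v)|\le\int_\Omega |V|\,|u|\,|v|\,dx\le \|V\|_{L^{n/2}(\Omega)}\,\|u\|_{L^{\overline{n}}(\Omega)}\,\|v\|_{L^{\overline{n}}(\Omega)} ,
\]
so in particular $\mathfrak{b}_V$ is a well-defined sesquilinear form. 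Inserting $\|u\|_{L^{\overline{n}}(\Omega)}\le\mathfrak{e}_\Omega\|u\|_{H^1(\Omega)}$ and the analogous estimate for $v$ then gives (\ref{pr1}), whence the boundedness of $\mathfrak{b}_V$ on $H^1(\Omega)\times H^1(\Omega)$.

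For the second assertion I would fix $u\in H^1(\Omega)$ and simply restrict the estimate above to test functions $v\in H_0^1(\Omega)\subset H^1(\Omega)$, obtaining
\[
|\langle \ell_V(u),v\rangle|=|\mathfrak{b}_V(u,v)|\le \mathfrak{e}_\Omega^2\,\|V\|_{L^{n/2}(\Omega)}\,\|u\|_{H^1(\Omega)}\,\|v\|_{H^1(\Omega)},\qquad v\in H_0^1(\Omega).
\]
Thus $\ell_V(u)$ is a bounded conjugate-linear form on $H_0^1(\Omega)$, i.e. (after the usual identification) an element of $H^{-1}(\Omega)$, and taking the supremum over $v$ in the unit ball of $H_0^1(\Omega)$ yields (\ref{p2}); here I would use the freedom fixed in the Preliminaries to take $\|\cdot\|_{H_0^1(\Omega)}$ to be the full $H^1(\Omega)$-norm, so that the constant comes out exactly as $\mathfrak{e}_\Omega^2$ with no extra Poincar\'e factor $c_\Omega$.

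In truth there is no genuine obstacle here: every step is a one-line application of H\"older's inequality or of the stated embedding, and the product $Vu\overline{v}$ is integrable precisely because of the exponent identity above. The only points deserving a moment's care are this exponent bookkeeping $\frac{2}{n}+\frac{2}{\overline{n}}=1$ (which reflects the relation between the Sobolev exponents $\underline{n}$ and $\overline{n}$ introduced at the outset) and the harmless choice of which of the two equivalent norms on $H_0^1(\Omega)$ to use in (\ref{p2}).
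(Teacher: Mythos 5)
Your proof is correct and follows essentially the same route as the paper: the paper first applies Cauchy--Schwarz to get $|\mathfrak{b}_V(u,v)|\le \|Vu^2\|_{L^1(\Omega)}^{1/2}\|Vv^2\|_{L^1(\Omega)}^{1/2}$ and then H\"older with the conjugate pair $n/2$, $\overline{n}/2$, which is just a repackaging of your single three-exponent H\"older step $\tfrac{2}{n}+\tfrac{1}{\overline{n}}+\tfrac{1}{\overline{n}}=1$, followed by the same embedding $\|w\|_{L^{\overline{n}}(\Omega)}\le\mathfrak{e}_\Omega\|w\|_{H^1(\Omega)}$ and the same restriction to $v\in H_0^1(\Omega)$ for \eqref{p2}. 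No gaps.
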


\begin{proof}
Let $u,v\in H^1(\Omega )$. We get by applying Cauchy-Schwarz's inequality 
\begin{equation}\label{p3}
|\mathfrak{b}_V(u,v)|\le \|Vu^2\|_{L^1(\Omega )}^{1/2}\|Vv^2\|_{L^1(\Omega )}^{1/2}.
\end{equation}
As $n/2$ is the conjugate exponent of $\overline{n}/2$, we obtain from H\"older's inequality, with $w=u$ or $w=v$,
\begin{equation}\label{p4}
\|Vw^2\|_{L^1(\Omega )}^{1/2}\le \|V\|_{L^{n/2}(\Omega)}^{1/2}\|w\|_{L^{\overline{n}}(\Omega )}\le \mathfrak{e}_\Omega \|V\|_{L^{n/2}(\Omega)}^{1/2}\|w\|_{H^1(\Omega )}.
\end{equation}
Then \eqref{p4} with $w=u$ and then with $w=v$ in \eqref{p3} gives \eqref{pr1}.

For $u\in H^1(\Omega )$,  \eqref{pr1} yields
\[
|\ell_V(u)(v)|\le \mathfrak{e}_\Omega^2 \|V\|_{L^{n/2}(\Omega )}\|u\|_{H^1(\Omega )}\|v\|_{H_0^1(\Omega )},\quad v\in H_0^1(\Omega ),
\]
from which we deduce  that $\ell_V(u)\in H^{-1}(\Omega )$ and that \eqref{p2} holds. 
\end{proof}

The following trace theorem is due to Gagliardo (see for instance \cite[Theorem 1.5.1.3, page 38]{Gr}) in which $C^{0,1}(D)$, $D=\overline{\Omega}$ or $D=\Gamma$, denotes the Banach space of Lipschitz continuous functions on $D$.

\begin{theorem}\label{trace-theorem-D}
The mapping $u\in C^{0,1}(\overline{\Omega})\mapsto u_{|\Gamma}\in C^{0,1}(\Gamma)$ has unique bounded extension, denoted by $\gamma_0$, as an operator from $H^1(\Omega )$ onto $H^{1/2}(\Gamma )$. This operator has a bounded right inverse.
\end{theorem}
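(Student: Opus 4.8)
The plan is to localize $\Gamma$ into Lipschitz graph patches, flatten each patch to the half-space $\mathbb{R}^n_+=\{x_n>0\}$ by a bi-Lipschitz change of variables, prove the trace and extension estimates on $\mathbb{R}^n_+$ by a Fourier argument, and glue the pieces with a partition of unity.

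First I would fix a finite open cover $U_1,\dots,U_N$ of $\Gamma$ together with an interior patch $U_0\Subset\Omega$, so that $\{U_j\}_{j=0}^N$ covers $\overline\Omega$, chosen so that in suitably rotated coordinates each $U_j\cap\Omega$ ($j\ge1$) is the region lying above the graph of a Lipschitz function $\varphi_j:\mathbb{R}^{n-1}\to\mathbb{R}$, with $U_j\cap\Gamma$ the graph itself; let $\{\chi_j\}_{j=0}^N$ be a subordinate partition of unity. The map $\Phi_j(x',x_n)=(x',x_n-\varphi_j(x'))$ is bi-Lipschitz and sends $U_j\cap\Omega$ into $\{y_n>0\}$ and $U_j\cap\Gamma$ into $\{y_n=0\}$. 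The two elementary facts I would record here are: (i) composition with a bi-Lipschitz map is a bounded isomorphism of $H^1$, by the almost-everywhere chain rule and the fact that the Jacobian is bounded above and below; and (ii) composition with a bi-Lipschitz map of $\mathbb{R}^{n-1}$ preserves $H^{1/2}$, which one reads off the intrinsic Gagliardo seminorm $[g]^2=\iint|g(x')-g(y')|^2|x'-y'|^{-n}\,dx'dy'$, invariant up to a multiplicative constant under such maps. Fact (ii) is what makes $H^{1/2}(\Gamma)$ well defined and localizable on a merely Lipschitz boundary.

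Second, on $\mathbb{R}^n_+$ I would prove $\|\gamma_0u\|_{H^{1/2}(\mathbb{R}^{n-1})}\le\|u\|_{H^1(\mathbb{R}^n_+)}$ for $u\in C_c^\infty(\overline{\mathbb{R}^n_+})$: taking the partial Fourier transform in $x'$ and writing $\langle\xi'\rangle=(1+|\xi'|^2)^{1/2}$, one has $|\hat u(\xi',0)|^2=-\int_0^\infty\partial_{x_n}|\hat u(\xi',x_n)|^2\,dx_n\le 2\int_0^\infty|\hat u|\,|\partial_{x_n}\hat u|\,dx_n$, hence $\langle\xi'\rangle|\hat u(\xi',0)|^2\le\int_0^\infty\big(\langle\xi'\rangle^2|\hat u|^2+|\partial_{x_n}\hat u|^2\big)\,dx_n$; integrating in $\xi'$ and using Plancherel gives the estimate, and density of $C_c^\infty(\overline{\mathbb{R}^n_+})$ in $H^1(\mathbb{R}^n_+)$ extends $\gamma_0$ to a bounded operator. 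For the right inverse I would use the explicit lifting $\widehat{Eg}(\xi',x_n)=\hat g(\xi')\,\psi(x_n\langle\xi'\rangle)$ with $\psi\in C_c^\infty([0,\infty))$ and $\psi(0)=1$, and check by the same computation that $\|Eg\|_{H^1(\mathbb{R}^n_+)}\le C\|g\|_{H^{1/2}(\mathbb{R}^{n-1})}$ while $\gamma_0Eg=g$.

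Third, the localized half-space estimates, transported back through the $\Phi_j$ and summed against the $\chi_j$, define a bounded operator $\gamma_0:H^1(\Omega)\to H^{1/2}(\Gamma)$ agreeing with $u\mapsto u_{|\Gamma}$ on $C^{0,1}(\overline\Omega)$, and the transported (suitably truncated) half-space extensions, reassembled as $Rg=\sum_j\chi_j E_j g$, give a bounded right inverse; surjectivity of $\gamma_0$ is then automatic. Uniqueness of the extension follows from density of $C^\infty(\overline\Omega)$, hence of $C^{0,1}(\overline\Omega)$, in $H^1(\Omega)$, valid for bounded Lipschitz domains. The main obstacle is fact (ii) together with checking that the localized $H^{1/2}$ norms recombine to a norm equivalent to the intrinsic one on $H^{1/2}(\Gamma)$ — the careful bookkeeping of a fractional norm under Lipschitz coordinate changes — whereas the half-space estimates are soft once one commits to the Fourier computation, and the $H^1$ density statement is standard and would simply be cited.
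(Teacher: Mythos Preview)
The paper does not prove this theorem at all: it is stated as a classical result due to Gagliardo and simply cited from Grisvard \cite[Theorem 1.5.1.3]{Gr}. So there is no ``paper's own proof'' to compare against; you have supplied a proof where the paper chose to quote one.

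Your outline is the standard textbook argument and is essentially correct. The localization by a partition of unity, the bi-Lipschitz flattening to the half-space, the Fourier computation for the trace inequality on $\mathbb{R}^n_+$, and the explicit lifting $\widehat{Eg}(\xi',x_n)=\hat g(\xi')\psi(x_n\langle\xi'\rangle)$ are all sound. Your identification of the one genuinely delicate point---that $H^{1/2}$ is stable under bi-Lipschitz changes of variables on $\mathbb{R}^{n-1}$, read off the Gagliardo seminorm, and that the chart-wise $H^{1/2}$ norms patch to an equivalent norm on $H^{1/2}(\Gamma)$---is accurate; this is exactly where Lipschitz regularity of $\Gamma$ enters and where a Fourier definition of $H^{1/2}(\Gamma)$ would not be available. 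One small bookkeeping point: in the reassembly $Rg=\sum_j\chi_jE_jg$ you should make explicit that $E_j$ acts on a localized piece of $g$ (e.g.\ $E_j(\tilde\chi_j|_\Gamma\, g)$ with $\tilde\chi_j\equiv1$ on $\mathrm{supp}\,\chi_j$), so that $\gamma_0 Rg=\sum_j\chi_j|_\Gamma\, g=g$ is transparent; as written the formula is slightly ambiguous, though the intent is clear.
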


According to this theorem, for any $f\in H^{1/2}(\Gamma )$, there exists $\mathscr{E}f\in H^1(\Omega )$ so that $\gamma_0\mathscr{E}f=f$ and 
\begin{equation}\label{p5}
\|\mathscr{E}f\|_{H^1(\Omega )}\le c_\Omega \|f\|_{H^{\frac{1}{2}}(\Gamma )}.
\end{equation}

We define on $H^{1/2}(\Gamma )$ the quotient norm
\[
\| f\|_q =\inf \left\{ \|F\|_{H^1(\Omega )};\; \gamma_0F=f \right\}.
\]
In light of \eqref{p5}, this quotient norm is equivalent to the initial norm on $H^{1/2}(\Gamma )$. Henceforward for convenience we will not distinguish these two norms and we use the notation $\|f\|_{H^{1/2}(\Gamma )}$ for both.

We set, for $V\in L^{n/2}(\Omega )$, 
\[
\mathscr{S}_V=\{ u\in H^1(\Omega );\; (-\Delta u+V)u=0\; \mbox{in}\; \Omega\}.
\]

\begin{lemma}\label{lemma-nor}
Let $V\in L^{n/2}(\Omega)$, $u\in \mathscr{S}_V$ and define 
\[
\gamma_1u : f\in H^{\frac{1}{2}}(\Gamma )\mapsto \gamma_1u(f)=\int_\Omega \left(\nabla u\cdot \nabla \overline{F}+Vu\overline{F}\right)dx,
\]
where $F\in H^1(\Omega )$ is arbitrary so that $\gamma_0F=f$. Then $\gamma_1u$ is well defined, belongs to $H^{-1/2}(\Gamma )$ and
\begin{equation}\label{p6}
\|\gamma_1u\|_{H^{-1/2}(\Gamma )}\le c_\Omega \left(1+\|V\|_{L^{n/2}(\Omega)}\right) \|u\|_{H^1(\Omega )}.
\end{equation}
\end{lemma}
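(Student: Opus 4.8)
The plan is to show that the formula defining $\gamma_1 u$ does not depend on the choice of extension $F$, and then to estimate the resulting functional. First I would prove well-definedness: suppose $F_1, F_2 \in H^1(\Omega)$ both satisfy $\gamma_0 F_i = f$, so that $F_1 - F_2 \in H_0^1(\Omega)$. Since $u \in \mathscr{S}_V$ means $(-\Delta + V)u = 0$ in $\Omega$, and using Lemma \ref{lemma-mult} to make sense of $Vu$ as an element of $H^{-1}(\Omega)$ via $\ell_V(u)$, the weak formulation of the equation reads $\int_\Omega \nabla u \cdot \nabla \overline{\varphi}\, dx + \langle \ell_V(u), \varphi \rangle = 0$ for all $\varphi \in H_0^1(\Omega)$. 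Applying this with $\varphi = F_1 - F_2$ shows $\gamma_1 u(f)$ computed with $F_1$ equals that computed with $F_2$. (One should remark that $\int_\Omega V u \overline{F}\, dx$ is indeed well defined for $F \in H^1(\Omega)$ by \eqref{pr1}, so each individual expression makes sense, not merely the difference.)

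Next I would establish linearity in $f$ — immediate since we may pick extensions additively, e.g. via the bounded operator $\mathscr{E}$ — and then bound $|\gamma_1 u(f)|$. The natural choice is $F = \mathscr{E}f$, so that $\|F\|_{H^1(\Omega)} \le c_\Omega \|f\|_{H^{1/2}(\Gamma)}$ by \eqref{p5}. Then
\[
|\gamma_1 u(f)| \le \|\nabla u\|_{L^2(\Omega)} \|\nabla F\|_{L^2(\Omega)} + |\mathfrak{b}_V(u, F)|,
\]
and applying Cauchy–Schwarz to the first term and \eqref{pr1} to the second gives
\[
|\gamma_1 u(f)| \le \|u\|_{H^1(\Omega)} \|F\|_{H^1(\Omega)} + \mathfrak{e}_\Omega^2 \|V\|_{L^{n/2}(\Omega)} \|u\|_{H^1(\Omega)} \|F\|_{H^1(\Omega)} \le c_\Omega \left(1 + \|V\|_{L^{n/2}(\Omega)}\right) \|u\|_{H^1(\Omega)} \|f\|_{H^{1/2}(\Gamma)}.
\]
This exhibits $\gamma_1 u$ as a bounded antilinear (or linear, depending on the convention for the pairing) functional on $H^{1/2}(\Gamma)$, hence an element of $H^{-1/2}(\Gamma)$, with the stated norm bound \eqref{p6}.

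I do not anticipate a serious obstacle here: the argument is essentially bookkeeping built on the two estimates already proved in Lemma \ref{lemma-mult} and the trace/extension theorem. The one point requiring a little care is the well-definedness step — making precise in what sense $(-\Delta + V)u = 0$ holds (namely as an identity in $H^{-1}(\Omega)$, where $-\Delta u$ is the usual distributional Laplacian paired against $H_0^1$ and $Vu$ is interpreted as $\ell_V(u)$), and checking that the test function $F_1 - F_2 \in H_0^1(\Omega)$ is legitimately admissible in that weak formulation. Once that is pinned down, the estimate is routine, and the choice $F = \mathscr{E}f$ cleanly absorbs the extension constant into $c_\Omega$.
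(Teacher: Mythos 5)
Your proposal is correct and follows essentially the same route as the paper: well-definedness via testing the weak formulation of $(-\Delta+V)u=0$ against $F_1-F_2\in H_0^1(\Omega)$ (the paper makes the density approximation by $C_0^\infty(\Omega)$ functions explicit), followed by the estimate from \eqref{pr1} and the trace/extension bound. The only cosmetic difference is that you fix $F=\mathscr{E}f$ and invoke \eqref{p5}, whereas the paper keeps $F$ arbitrary and passes to the quotient norm on $H^{1/2}(\Gamma)$; both yield \eqref{p6}.
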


\begin{proof} 
Let $F_1,F_2\in H^1(\Omega )$ so that $\gamma_0F_1=\gamma_0F_2=f$. As $F_1-F_2\in H_0^1(\Omega )$, we find a sequence $(\psi _k)\in C_0^\infty (\Omega )$ converging to $F_1-F_2$. Then 
\[
0=\langle (-\Delta +V)u,\psi_k\rangle =\int_\Omega (\nabla u\cdot \nabla \overline{\psi_k}+Vu\overline{\psi_k})dx.
\]
We obtain by passing to the limit, when $k\rightarrow +\infty$,
\[
0=\int_\Omega \left[\nabla u\cdot \nabla (\overline{F}_1-\overline{F}_2)+Vu(\overline{F}_1-\overline{F}_2)\right]dx.
\]
This shows that $\gamma_1u$ is well defined.

In light of  \eqref{pr1}, we get
\[
\left|  \gamma_1u(f)\right| \le c_\Omega \left(1+\|V\|_{L^{n/2}(\Omega)}\right) \|u\|_{H^1(\Omega )}\|F\|_{H^1(\Omega )}.
\]
But $F\in H^1(\Omega )$ is arbitrary so that $\gamma_0F=f$. Whence
\[
\left|  \gamma_1u(f)\right| \le c_\Omega \left(1+\|V\|_{L^{n/2}(\Omega)}\right) \|u\|_{H^1(\Omega )}\|f\|_{H^{\frac{1}{2}}(\Gamma )}
\]
from which we deduce readily that $\gamma_1u\in H^{-1/2}(\Gamma )$ and \eqref{p6} holds.
\end{proof}

We recall that $C^{1,1}(\overline{\Omega})$ denotes the space of functions from $C^1(\overline{\Omega} )$ having their partial derivatives of first order in $C^{0,1}(\overline{\Omega} )$.

When $u\in \mathscr{S}_V\cap C^{1,1}(\overline{\Omega} )$ then one can check, with the help of Green's formula, that $\gamma_1u=\partial_\nu u$, where $\partial_\nu $ denotes the derivative along the unit normal vector field $\nu$ pointing outward $\Omega$. Therefore, $\gamma_1$ can be seen as an extension of $\partial_\nu $ for functions from $\mathscr{S}_V$.

\begin{lemma}\label{form}
Let $F\in H^1(\Omega )$. Then $\Delta F\in \mathscr{D}'(\Omega )$ extends to a bounded linear form on $H_0^1(\Omega )$. 
\end{lemma}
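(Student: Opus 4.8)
The plan is to compute the action of the distribution $\Delta F$ on test functions, recognise it as a functional that is continuous for the $H_0^1(\Omega)$-norm, and then extend it to $H_0^1(\Omega)$ by density of $C_0^\infty(\Omega)$.

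First I would unwind the definition: for $\psi\in C_0^\infty(\Omega)$ one has $\langle\Delta F,\psi\rangle=\int_\Omega F\,\Delta\overline{\psi}\,dx$. Since each first-order partial derivative $\partial_j F$ is the weak derivative of $F\in H^1(\Omega)$ and $\partial_j\psi\in C_0^\infty(\Omega)$, the very definition of the weak gradient gives $\int_\Omega F\,\partial_j^2\overline{\psi}\,dx=-\int_\Omega \partial_jF\,\partial_j\overline{\psi}\,dx$; summing over $j=1,\dots,n$ yields
\[
\langle\Delta F,\psi\rangle=-\int_\Omega \nabla F\cdot\nabla\overline{\psi}\,dx,\qquad \psi\in C_0^\infty(\Omega).
\]
It is worth noting that this identity follows directly from the definition of the distributional derivative applied componentwise, not from Green's formula, so that no extra regularity on $F$ is needed.

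Next, Cauchy-Schwarz's inequality gives, for every $\psi\in C_0^\infty(\Omega)$,
\[
|\langle\Delta F,\psi\rangle|\le\|\nabla F\|_{L^2(\Omega)}\|\nabla\psi\|_{L^2(\Omega)}\le c_\Omega\|\nabla F\|_{L^2(\Omega)}\|\psi\|_{H_0^1(\Omega)},
\]
where we used that $u\mapsto\|\nabla u\|_{L^2(\Omega)}$ is equivalent to the $H_0^1(\Omega)$-norm, as recalled in the Preliminaries. Since $C_0^\infty(\Omega)$ is dense in $H_0^1(\Omega)$, the linear form $\psi\mapsto\langle\Delta F,\psi\rangle$, being bounded on the dense subspace $C_0^\infty(\Omega)$, admits a unique bounded extension to $H_0^1(\Omega)$, i.e.\ an element of $H^{-1}(\Omega)$, still denoted $\Delta F$, with $\|\Delta F\|_{H^{-1}(\Omega)}\le c_\Omega\|\nabla F\|_{L^2(\Omega)}$. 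There is essentially no obstacle in this argument; the only point requiring a word of care is the integration by parts for a general $F\in H^1(\Omega)$, which, as noted above, is justified by the definition of the weak derivative rather than by Green's formula.
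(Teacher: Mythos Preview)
Your proof is correct and follows essentially the same approach as the paper: write $\langle\Delta F,\psi\rangle=-\int_\Omega\nabla F\cdot\nabla\overline{\psi}\,dx$ for test functions, bound by Cauchy--Schwarz, and extend by density of $C_0^\infty(\Omega)$ in $H_0^1(\Omega)$. The only difference is that you spell out the weak-derivative justification and insert a constant $c_\Omega$ from the norm equivalence, whereas the paper directly identifies $\|\nabla\psi\|_{L^2(\Omega)}$ with $\|\psi\|_{H_0^1(\Omega)}$ (as permitted by the convention stated in the Preliminaries).
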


\begin{proof}
We have
\[
\langle \Delta F ,\varphi \rangle=-\int_\Omega \nabla F\cdot \nabla \overline{\varphi} dx,\quad \varphi \in C_0^\infty (\Omega).
\]
Whence
\[
\left| \langle \Delta F ,\varphi \rangle\right|\le \|\nabla F\|_{L^2(\Omega )}\|\varphi \|_{H_0^1(\Omega)},\quad \varphi \in C_0^\infty (\Omega).
\]
The lemma then follows by using the density of $C_0^\infty (\Omega)$ in $H_0^1(\Omega )$.
\end{proof}

This lemma allows us to consider $\Delta F$, $F\in H^1(\Omega )$, as an element of $H^{-1}(\Omega )$.

\subsection{Spectral decomposition}

For $V\in L^{n/2}(\Omega )$ real valued, we define on $H^1(\Omega)\times H^1(\Omega )$ the sesquilinear form $\mathfrak{a}$ by
\begin{align*}
\mathfrak{a}(u,v)&=\int_\Omega \left(\nabla u\cdot \nabla\overline{ v}+Vu\overline{v}\right)dx
\\
&=\int_\Omega \nabla u\cdot \nabla\overline{v}dx+\mathfrak{b}_V(u,v).
\end{align*}

An immediate consequence of Lemma \ref{lemma-mult} is that $\mathfrak{a}$ is bounded with
\[
|\mathfrak{a}(u,v)|\le \left(1+c_\Omega \|V\|_{L^{n/2}(\Omega )}\right)\|u\|_{H^1(\Omega )}\|v\|_{H^1(\Omega )}.
\]
Let $\mathfrak{p}_\Omega$ be the smallest constant so that
\[
\|u\|_{H^1(\Omega )}\le \mathfrak{p}_\Omega \|\nabla u\|_{L^2(\Omega )},\quad u\in H_0^1(\Omega ).
\]

Using the density of $C_0^\infty (\Omega )$ in $L^{n/2}(\Omega )$, we find $\tilde{V}\in C_0^\infty(\Omega )$ so that 
\[
\|V-\tilde{V}\|_{L^{n/2}(\Omega )}\le \frac{1}{2\left[\mathfrak{p}_\Omega \mathfrak{e}_\Omega \right]^2}.
\]
Therefore, for $u\in H_0^1(\Omega)$,
\begin{align*}
\mathfrak{a}(u,u)&\ge \mathfrak{p}_\Omega^{-2} \|u\|_{H^1(\Omega)}^2-\|\tilde{V}\|_{L^\infty (\Omega )}\|u\|_{L^2(\Omega )}^2
\\
&\hskip 3cm -\|V-\tilde{V}\|_{L^{n/2}(\Omega )}\|u\|_{L^{\overline{n}}(\Omega)}^2
\\
&\ge \frac{\mathfrak{p}_\Omega^{-2}}{2}\|u\|_{H^1(\Omega)}^2-\|\tilde{V}\|_{L^\infty (\Omega )}\|u\|_{L^2(\Omega )}^2
\end{align*}

The bounded operator $A_V: H_0^1(\Omega )\rightarrow H^{-1}(\Omega )$ defined by 
\[
\langle A_Vu,v\rangle =\mathfrak{a}(u,v),\quad u,v\in H_0^1(\Omega )
\]
is then self-adjoint (with respect to the pivot space $L^2(\Omega)$) and coercive. Whence,  according to \cite[Theorem 2.37, page 49]{WM}, the spectrum of $A_V$, $\sigma(A_V)$, consists in a sequence 

\[
-\infty< \lambda_V^1\le \lambda_V^2\le \ldots \le \lambda_V^k\le \ldots ,
\]
with
\[
\lambda_V^k\rightarrow \infty \quad  \mbox{as}\; k\rightarrow \infty .
\]

The set $\rho (A_V)=\mathbb{C}\setminus\sigma(A_V)$ is  usually called the resolvent set of the operator $A_V$. We define then
\[
R_V(\lambda )=(A_V-\lambda )^{-1}, \quad \lambda \in \rho (A_V).
\]

Furthermore, $L^2(\Omega )$ admits an orthonormal basis $(\phi_V^k)$ of eigenfunctions, each $\phi_V^k$ is associated to $\lambda_V^k$.

Finally, we note that we have the following Weyl's asymptotic formula (see \cite[Proposition A.1, page 223]{Po})
\begin{equation}\label{weyl}
\lambda_V^k\sim c_\Omega k^{2/n},\quad k\ge 1.
\end{equation}

We denote for convenience  the subset of real valued potentials $V\in L^{n/2}(\Omega )$ so that  $0\in \rho(A_V)$ by $L_\ast^{n/2}(\Omega )$.

\subsection{Non homogenous BVPs and the DN map}

Pick $V\in L_\ast^{n/2}(\Omega )$, $f\in H^{1/2}(\Gamma )$ and let $w =A_V^{-1}(\mathscr{E}f)\in H_0^1(\Omega )$. If $v\in H_0^1(\Omega )$ then we have
\[
-\langle (- \Delta +V)\mathscr{E}f,v\rangle =-\mathfrak{a}(\mathscr{E}f,v)
\]
and hence
\[
\mathfrak{a}(w+\mathscr{E}f,v)=0,\quad v\in H_0^1(\Omega ).
\]
Therefore, one can check that $u=w+\mathscr{E}f\in H^1(\Omega )$ satisfies $-\Delta u+Vu=0$ in $\mathscr{D}'(\Omega )$ and $\gamma_0u=f$. That is $u$ is the solution of the BVP
\begin{equation}\label{bvp}
\left\{
\begin{array}{ll}
-\Delta u+Vu=0\quad \quad \mbox{in}\; \Omega ,\\ \gamma_0u=f.
\end{array}
\right.
\end{equation}

The uniqueness of solutions of the BVP \eqref{bvp} follows from the fact that $A_V$ is invertible.

\begin{theorem}\label{theorembvp}
Let $V\in L_\ast^{n/2}(\Omega )$ and $f\in H^{1/2}(\Gamma )$. Then the BVP \eqref{bvp} has a unique solution $u_V(f)\in H^1(\Omega )$ with
\begin{equation}\label{ine4}
\|u_V(f)\|_{H^1(\Omega )}\le c_\Omega \left(1+\|V\|_{L^{n/2}(\Omega )}\right)\|f\|_{H^{1/2}(\Gamma )}.
\end{equation}
\end{theorem}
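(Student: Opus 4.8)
The plan is to read off existence and uniqueness from the construction performed just before the statement, and then to derive \eqref{ine4} by keeping track of the constants in it. Fix $f\in H^{1/2}(\Gamma)$ and an extension $\mathscr Ef\in H^1(\Omega)$ with $\gamma_0\mathscr Ef=f$ and $\|\mathscr Ef\|_{H^1(\Omega)}\le c_\Omega\|f\|_{H^{1/2}(\Gamma)}$, available from Theorem \ref{trace-theorem-D} and \eqref{p5}. The first point is that $(-\Delta+V)\mathscr Ef$ is a genuine element of $H^{-1}(\Omega)$: by Lemma \ref{form}, $\Delta\mathscr Ef\in H^{-1}(\Omega)$ with $\|\Delta\mathscr Ef\|_{H^{-1}(\Omega)}\le\|\nabla\mathscr Ef\|_{L^2(\Omega)}$, while by Lemma \ref{lemma-mult} the linear form $\ell_V(\mathscr Ef):v\mapsto\mathfrak b_V(\mathscr Ef,v)$ belongs to $H^{-1}(\Omega)$ and satisfies \eqref{p2}. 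Hence $g:=-\Delta\mathscr Ef+\ell_V(\mathscr Ef)\in H^{-1}(\Omega)$, and using \eqref{p5} once more,
\begin{equation*}
\|g\|_{H^{-1}(\Omega)}\le\bigl(1+\mathfrak e_\Omega^2\|V\|_{L^{n/2}(\Omega)}\bigr)\|\mathscr Ef\|_{H^1(\Omega)}\le c_\Omega\bigl(1+\|V\|_{L^{n/2}(\Omega)}\bigr)\|f\|_{H^{1/2}(\Gamma)}.
\end{equation*}

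Next I would solve. Since $V\in L_\ast^{n/2}(\Omega)$ one has $0\in\rho(A_V)$, so $A_V:H_0^1(\Omega)\to H^{-1}(\Omega)$ is an isomorphism with inverse $R_V(0)$; set $w:=-R_V(0)g\in H_0^1(\Omega)$ and $u_V(f):=w+\mathscr Ef\in H^1(\Omega)$. Then $\gamma_0u_V(f)=\gamma_0\mathscr Ef=f$, and for every $v\in H_0^1(\Omega)$,
\begin{equation*}
\mathfrak a(u_V(f),v)=\langle A_Vw,v\rangle+\mathfrak a(\mathscr Ef,v)=\langle A_Vw+g,v\rangle=0,
\end{equation*}
because $\mathfrak a(\mathscr Ef,v)=\langle g,v\rangle$ by the definition of $g$ and $A_Vw=-g$ by the definition of $w$. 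Testing against $v\in C_0^\infty(\Omega)$, this reads $-\Delta u_V(f)+Vu_V(f)=0$ in $\mathscr D'(\Omega)$, so $u_V(f)$ solves \eqref{bvp}. Uniqueness is immediate: the difference of two solutions lies in $H_0^1(\Omega)$ and is annihilated by $A_V$, hence vanishes since $A_V$ is injective.

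For the estimate, $\|w\|_{H^1(\Omega)}\le\|R_V(0)\|_{\mathcal L(H^{-1}(\Omega),H_0^1(\Omega))}\|g\|_{H^{-1}(\Omega)}$, so that $\|u_V(f)\|_{H^1(\Omega)}\le\|w\|_{H^1(\Omega)}+\|\mathscr Ef\|_{H^1(\Omega)}$ together with the bound on $\|g\|_{H^{-1}(\Omega)}$ yields an inequality of the form $\|u_V(f)\|_{H^1(\Omega)}\le c_\Omega\bigl(1+\|R_V(0)\|\bigr)\bigl(1+\|V\|_{L^{n/2}(\Omega)}\bigr)\|f\|_{H^{1/2}(\Gamma)}$. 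The one step I expect to require care is the precise shape of the constant in \eqref{ine4}: the argument naturally produces the extra factor $\|R_V(0)\|=\|A_V^{-1}\|_{\mathcal L(H^{-1}(\Omega),H_0^1(\Omega))}$, which is finite for $V\in L_\ast^{n/2}(\Omega)$ but is not, a priori, controlled by $\|V\|_{L^{n/2}(\Omega)}$ and $\Omega$ alone; one therefore either reads the constant in \eqref{ine4} as being allowed to depend on $\|R_V(0)\|$ as well, or restricts to a family of potentials on which $\|R_V(0)\|$ is uniformly bounded, in which case \eqref{ine4} holds verbatim with that uniform bound absorbed into $c_\Omega$. Apart from this bookkeeping, the proof is routine given Lemmas \ref{form} and \ref{lemma-mult} and the invertibility of $A_V$.
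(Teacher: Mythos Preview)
Your argument is exactly the construction the paper sketches immediately before the statement (there is no separate proof in the paper): lift $f$ to $\mathscr Ef$, apply $A_V^{-1}$ to the right-hand side, and add the two pieces. Your observation about the constant is correct and worth recording: the estimate \eqref{ine4} as written, with $c_\Omega$ depending only on $n$ and $\Omega$, cannot hold uniformly over $L_\ast^{n/2}(\Omega)$, since $\|A_V^{-1}\|$ blows up as an eigenvalue of $A_V$ approaches $0$; the paper is tacitly allowing the constant to depend on $V$ through $\|A_V^{-1}\|$ (or working on bounded sets of potentials with a uniform spectral gap), and you have identified this correctly rather than found a gap in your own reasoning.
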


Let $V\in L_\ast^{n/2}(\Omega )$. From Theorem \ref{theorembvp}, for $f\in H^{\frac{1}{2}}(\Gamma )$, we have $u_V(f)\in \mathscr{S}_V$. Therefore, in light of Lemma \ref{lemma-nor},  we can define the DN map $\Lambda_V$ by
\[
\Lambda_V:f\in H^{1/2}(\Gamma )\mapsto \gamma_1u_V(f)\in H^{-1/2}(\Gamma).
\]

From inequalities \eqref{p6}  and \eqref{ine4} we have $\Lambda_V\in \mathscr{B}\left(H^{1/2}(\Gamma ),H^{-1/2}(\Gamma )\right)$ and
\begin{equation}\label{ine7}
\|\Lambda_V\|_{\mathscr{B}\left(H^{1/2}(\Gamma ),H^{-1/2}(\Gamma )\right)}\le c_\Omega \left(1+\|V\|_{L^{n/2}(\Omega )}\right).
\end{equation}

The following integral identity will be useful in the sequel.

\begin{lemma}\label{integral-identity}
Let $V,\tilde{V}\in L_\ast^{n/2}(\Omega )$, $u\in \mathscr{S}_V$ and $\tilde{u}\in \mathscr{S}_{\tilde{V}}$. Then we have
\begin{equation}\label{ii}
\int_\Omega ({\tilde{V}}-V)u\overline{\tilde{u}}dx =\langle \left(\Lambda_{\tilde{V}}-\Lambda_V\right)(\gamma_0u),\gamma_0\tilde{u}\rangle .
\end{equation}
\end{lemma}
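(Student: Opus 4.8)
The plan is to compute the two pairings $\langle\Lambda_V(\gamma_0u),\gamma_0\tilde u\rangle$ and $\langle\Lambda_{\tilde V}(\gamma_0u),\gamma_0\tilde u\rangle$ separately and then subtract. Put $f=\gamma_0u$ and $g=\gamma_0\tilde u$. Since $u\in\mathscr{S}_V$ with $\gamma_0u=f$ and $\tilde u\in\mathscr{S}_{\tilde V}$ with $\gamma_0\tilde u=g$, the uniqueness part of Theorem \ref{theorembvp} yields $u=u_V(f)$ and $\tilde u=u_{\tilde V}(g)$, so that $\Lambda_V(f)=\gamma_1u$ whereas $\Lambda_{\tilde V}(f)=\gamma_1\tilde w$ with $\tilde w:=u_{\tilde V}(f)\in\mathscr{S}_{\tilde V}$ and $\gamma_0\tilde w=f$.

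For the first pairing I would invoke the definition of $\gamma_1u$ in Lemma \ref{lemma-nor}, which does not depend on the chosen extension; taking the extension $F=\tilde u$ of $g$ (legitimate since $\gamma_0\tilde u=g$) gives
\[
\langle\Lambda_V(f),g\rangle=\gamma_1u(g)=\int_\Omega\left(\nabla u\cdot\nabla\overline{\tilde u}+Vu\overline{\tilde u}\right)dx.
\]
In the same way, choosing again $F=\tilde u$ as an extension of $g$ in $\gamma_1\tilde w(g)$,
\[
\langle\Lambda_{\tilde V}(f),g\rangle=\int_\Omega\left(\nabla\tilde w\cdot\nabla\overline{\tilde u}+\tilde V\tilde w\overline{\tilde u}\right)dx.
\]

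The crux is then to replace $\tilde w$ by $u$ in the last line, that is to prove
\[
\int_\Omega\left(\nabla(\tilde w-u)\cdot\nabla\overline{\tilde u}+\tilde V(\tilde w-u)\overline{\tilde u}\right)dx=0.
\]
Since $\gamma_0(\tilde w-u)=f-f=0$ we have $\tilde w-u\in H_0^1(\Omega)$. Taking complex conjugates, using the symmetry of the scalar product on $\mathbb{C}^n$ and the fact that $\tilde V$ is real valued, the integral above is the conjugate of
\[
\int_\Omega\left(\nabla\tilde u\cdot\nabla\overline{(\tilde w-u)}+\tilde V\tilde u\,\overline{(\tilde w-u)}\right)dx,
\]
which vanishes because $\tilde u\in\mathscr{S}_{\tilde V}$: this is exactly the weak formulation of $(-\Delta+\tilde V)\tilde u=0$ tested against the element $\tilde w-u$ of $H_0^1(\Omega)$, as already obtained in the proof of Lemma \ref{lemma-nor} (one uses Lemma \ref{lemma-mult} together with Lemma \ref{form} to know that $(-\Delta+\tilde V)\tilde u\in H^{-1}(\Omega)$, and then the density of $C_0^\infty(\Omega)$ in $H_0^1(\Omega)$). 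Hence $\langle\Lambda_{\tilde V}(f),g\rangle=\int_\Omega(\nabla u\cdot\nabla\overline{\tilde u}+\tilde Vu\overline{\tilde u})dx$, and subtracting the two expressions gives \eqref{ii}.

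I expect the only delicate point to be this substitution of $\tilde w$ for $u$: the two potentials play asymmetric roles, since $u$ solves the equation with $V$ and not with $\tilde V$, so $u$ cannot be fed directly into the weak formulation attached to $\tilde u$; routing through $\tilde w=u_{\tilde V}(f)$ and exploiting $\tilde w-u\in H_0^1(\Omega)$ circumvents this, at the price of carefully tracking the complex conjugations — which is precisely where the hypothesis that $V$ and $\tilde V$ are real valued is used.
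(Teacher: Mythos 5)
Your proposal is correct and follows essentially the same route as the paper: your $\tilde w=u_{\tilde V}(f)$ is the paper's auxiliary function $v$, the choice of extension $F=\tilde u$ in the definition of $\gamma_1$ is the same, and the key step — testing the weak formulation of $(-\Delta+\tilde V)\tilde u=0$ against $\tilde w-u\in H_0^1(\Omega)$ and taking conjugates using that $\tilde V$ is real valued — is exactly the paper's identity \eqref{ii2}. The only difference is bookkeeping: you compute the two pairings separately and subtract, whereas the paper works directly with $\langle\gamma_1(u-v),\gamma_0\tilde u\rangle$.
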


\begin{proof}
Let $v=u_{\tilde{V}}(\gamma_0u)$. From the definition of $\gamma_1$ in Lemma \ref{lemma-nor}, we have
\begin{equation}\label{ii1}
\langle \gamma_1(u-v),\gamma_0 \tilde{u}\rangle = \int_\Omega (Vu-\tilde{V}v)\overline{\tilde{u}}dx+\int_\Omega \nabla (u-v)\cdot \nabla \overline{\tilde{u}}dx 
\end{equation}
and
\begin{equation}\label{ii2}
0=\langle \gamma_1{\tilde{u}},\gamma_0(u-v)\rangle = \int_\Omega \tilde{V} \tilde{u}(\overline{u}-\overline{v})dx+\int_\Omega \nabla \tilde{u}\cdot \nabla (\overline{u}-\overline{v})dx.
\end{equation}

Identity \eqref{ii2} yields
\[
\int_\Omega \nabla \tilde{u}\cdot \nabla (\overline{u}-\overline{v})dx=-\int_\Omega \tilde{V} \tilde{u}(\overline{u}-\overline{v})dx.
\]
Equivalently, we have 
\[
\int_\Omega \nabla \overline{\tilde{u}}\cdot \nabla (u-v)dx=-\int_\Omega \tilde{V} \overline{\tilde{u}}(u-v)dx.
\]
This inequality in \eqref{ii1} gives
\[
\langle \gamma_1(u-v),\gamma_0 \tilde{u}\rangle = \int_\Omega (V-{\tilde{V}})u\overline{\tilde{u}}dx.
\]
We end up getting the expected identity by noting that 
\[ 
\langle \gamma_1(v-u),\gamma_0 \tilde{u}\rangle=\langle \left(\Lambda_{\tilde{V}}-\Lambda_V\right)(\gamma_0u),\gamma_0\tilde{u}\rangle.
\]
\end{proof}

Even if it is not always necessary, we assume in the rest of this text that $\Omega$ is of class $C^{1,1}$.

\subsection{DN map for transposition solutions}\label{sub-ts}

We recall that the space $H_\Delta (\Omega)$ is defined by
\[
H_\Delta (\Omega )=\{ u\in L^2(\Omega); \; \Delta u\in L^2(\Omega )\}.
\]

This space, equipped with its natural norm
\[
\|u\|_{H_\Delta (\Omega)}=\|u\|_{L^2 (\Omega)}+\|\Delta u\|_{L^2(\Omega)},
\]
is a Banach space.

For this space we have the following trace theorem

\begin{lemma}\label{lemma-ts1}
$($\cite{BU}$)$ The mapping $u\in C_0^\infty (\overline{\Omega})\mapsto (u_{|\Gamma},\partial_\nu u_{|\Gamma})$ extends to a bounded operator  
\[
u\in H_\Delta (\Omega )\mapsto (\gamma_0u,\gamma_1u)\in H^{-1/2}(\Gamma )\times H^{-3/2}(\Gamma ).
\]
Moreover, where $V\in L^\infty(\Omega ,\mathbb{R})$, the following generalized Green's formula holds
\begin{equation}\label{ts1}
\int_\Omega (-\Delta +V)u\overline{v}=\int_\Omega u(-\Delta +V)\overline{v}+\langle \gamma_0u,\gamma_1v\rangle-\langle \gamma_1u,\gamma_0v\rangle,
\end{equation}
for $u\in H_\Delta (\Omega)$ and $v\in H^2(\Omega )$.
\end{lemma}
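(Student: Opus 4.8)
The plan is to construct $\gamma_0$ and $\gamma_1$ on $H_\Delta(\Omega)$ by transposition against $H^2(\Omega)$. As a preliminary reduction, note that since $V$ is real valued one has $\int_\Omega Vu\overline v\,dx=\int_\Omega uV\overline v\,dx$, so the two terms containing $V$ in \eqref{ts1} cancel; hence it suffices to build the traces and to prove the identity for $V=0$, namely
\[
\int_\Omega (-\Delta u)\overline v\,dx-\int_\Omega u(-\Delta\overline v)\,dx=\langle \gamma_0u,\gamma_1v\rangle-\langle \gamma_1u,\gamma_0v\rangle,\qquad u\in H_\Delta(\Omega),\ v\in H^2(\Omega).
\]

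First I would record the two facts about $H^2(\Omega)$ that carry the geometric content and hold because $\Omega$ is of class $C^{1,1}$ (see \cite{Gr}): the map $v\mapsto(\gamma_0v,\gamma_1v)=(v_{|\Gamma},\partial_\nu v_{|\Gamma})$ is bounded from $H^2(\Omega)$ onto $H^{3/2}(\Gamma)\times H^{1/2}(\Gamma)$ and admits a bounded right inverse, and its kernel is $H_0^2(\Omega)$, the closure of $C_0^\infty(\Omega)$ in $H^2(\Omega)$. Then, for $u\in H_\Delta(\Omega)$ and $\Phi\in H^2(\Omega)$, I set
\[
Q(u,\Phi)=\int_\Omega\big[(-\Delta u)\overline\Phi-u(-\Delta\overline\Phi)\big]\,dx .
\]
Cauchy--Schwarz gives $|Q(u,\Phi)|\le\|u\|_{H_\Delta(\Omega)}\|\Phi\|_{H^2(\Omega)}$, and $Q(u,\cdot)$ vanishes on $C_0^\infty(\Omega)$ by the very definition of $\Delta u$ as a distribution, hence on $H_0^2(\Omega)$ by continuity. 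Consequently $Q(u,\Phi)$ depends on $\Phi$ only through $(\gamma_0\Phi,\gamma_1\Phi)$. So, choosing for $\varphi\in H^{1/2}(\Gamma)$ a lift $\Phi\in H^2(\Omega)$ with $\gamma_0\Phi=0$, $\gamma_1\Phi=\varphi$ and $\|\Phi\|_{H^2(\Omega)}\le c_\Omega\|\varphi\|_{H^{1/2}(\Gamma)}$, and for $\varphi\in H^{3/2}(\Gamma)$ a lift $\Phi\in H^2(\Omega)$ with $\gamma_0\Phi=\varphi$, $\gamma_1\Phi=0$ and $\|\Phi\|_{H^2(\Omega)}\le c_\Omega\|\varphi\|_{H^{3/2}(\Gamma)}$, one can set unambiguously $\langle\gamma_0u,\varphi\rangle:=Q(u,\Phi)$ and $\langle\gamma_1u,\varphi\rangle:=-Q(u,\Phi)$. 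The estimate on $Q$ and the bounds on the lifts then give $\gamma_0u\in H^{-1/2}(\Gamma)$, $\gamma_1u\in H^{-3/2}(\Gamma)$ with norms bounded by $c_\Omega\|u\|_{H_\Delta(\Omega)}$.

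Next I would verify that these operators extend the classical traces and satisfy the identity. For $u\in C_0^\infty(\overline\Omega)$, the classical Green formula (valid for $\Phi\in H^2(\Omega)$ by density of $C^\infty(\overline\Omega)$ in $H^2(\Omega)$) reads $Q(u,\Phi)=\int_\Gamma\big(u_{|\Gamma}\,\overline{\partial_\nu\Phi}-\partial_\nu u_{|\Gamma}\,\overline{\Phi_{|\Gamma}}\big)\,dS$; specializing the lift shows $\langle\gamma_0u,\varphi\rangle=\int_\Gamma u_{|\Gamma}\overline\varphi\,dS$ and $\langle\gamma_1u,\varphi\rangle=\int_\Gamma\partial_\nu u_{|\Gamma}\,\overline\varphi\,dS$, that is $\gamma_0u=u_{|\Gamma}$ and $\gamma_1u=\partial_\nu u_{|\Gamma}$ (pairings being extensions of the $L^2(\Gamma)$ inner product). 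Finally, for $u\in H_\Delta(\Omega)$ and $v\in H^2(\Omega)$, pick lifts $\Phi_0$ of $(\gamma_0v,0)$ and $\Phi_1$ of $(0,\gamma_1v)$; then $v-\Phi_0-\Phi_1\in H_0^2(\Omega)$, so $Q(u,v)=Q(u,\Phi_0)+Q(u,\Phi_1)=-\langle\gamma_1u,\gamma_0v\rangle+\langle\gamma_0u,\gamma_1v\rangle$, which is the announced identity for $V=0$, hence for all real $V$ by the reduction above. If one wants $\gamma_0,\gamma_1$ to be literally \emph{the} continuous extension, one adds that $C_0^\infty(\overline\Omega)$ is dense in $H_\Delta(\Omega)$.

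I expect the only genuinely delicate points to be the two $H^2$ facts invoked at the outset --- the surjectivity of the trace pair with bounded right inverse and the identification of its kernel with $H_0^2(\Omega)$ at the $C^{1,1}$ level of smoothness. Everything else is a routine manipulation of Cauchy--Schwarz and of the distributional definition of $\Delta u$, together with the classical Green formula on smooth pairs.
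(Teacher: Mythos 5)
Your argument is sound, but note that the paper itself offers no proof of this lemma: it is stated as a quoted result from \cite{BU}, so there is no internal proof to compare against. What you have written is the standard transposition (duality) construction of the traces on $H_\Delta(\Omega)$, and it works: the reduction to $V=0$ is legitimate because $V\in L^\infty(\Omega,\mathbb{R})$ makes the two $V$-terms cancel pointwise in $L^1(\Omega)$; the form $Q(u,\cdot)$ is bounded on $H^2(\Omega)$ and kills $C_0^\infty(\Omega)$ by the distributional definition of $\Delta u$, hence kills $H_0^2(\Omega)$; and once one knows that $(\gamma_0,\gamma_1):H^2(\Omega)\to H^{3/2}(\Gamma)\times H^{1/2}(\Gamma)$ is onto with bounded right inverse and kernel exactly $H_0^2(\Omega)$ (true for $C^{1,1}$ domains, which is the standing assumption at this point of the text, see \cite{Gr}), your definitions of $\gamma_0u$ and $\gamma_1u$ are unambiguous, land in the right dual spaces with the right norm bounds, agree with the classical traces on smooth functions, and yield \eqref{ts1} by splitting $v$ along the two lifts. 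You correctly isolate the two genuinely nontrivial external inputs; I would only add that the density of $C_0^\infty(\overline{\Omega})$ in $H_\Delta(\Omega)$, which you mention in passing as what makes the extension \emph{the} extension, is itself not a triviality (it is usually proved by a mollification-and-dilation argument near the boundary) and deserves at least a citation rather than a parenthetical remark, since without it the word ``extends'' in the statement is only existence, not uniqueness.
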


The existence of transposition solutions is guaranteed by the following theorem.

\begin{theorem}\label{theorem-ts1}
$($\cite{CKS}$)$ Fix $M>0$. Then there exists a constant $C>0$, only depending on $\Omega$ and $M$ so that, for any $V\in L^\infty (\Omega )\cap L_\ast^{n/2}(\Omega )$ satisfying $\|V\|_{L^\infty(\Omega)}\le M$ and $f\in H^{-1/2}(\Gamma )$, we find  a unique $u_V(f)\in H_\Delta (\Omega)$ solution of the non homogenous BVP
\[
(-\Delta +V)u=0\; \mbox{in}\; \Omega \quad \mbox{and}\quad \gamma_0u=f.
\]
Furthermore,
\begin{equation}\label{ts2}
\|u_V(f)\|_{H_\Delta (\Omega)}\le C\|f\|_{H^{-1/2}(\Gamma )}.
\end{equation}
\end{theorem}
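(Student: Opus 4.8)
The plan is to construct $u_V(f)$ by transposition against the solution operator of the homogeneous Dirichlet problem, and then to read off the equation, the Dirichlet trace and uniqueness from the generalized Green formula \eqref{ts1}. First, for $g\in L^2(\Omega)$ I would let $w_g\in H_0^1(\Omega)$ be the unique solution of $(-\Delta+V)w_g=g$ in $\Omega$; this exists because $0\in\rho(A_V)$, so that $w_g=R_V(0)g$. Since $\Omega$ is of class $C^{1,1}$ and $V\in L^\infty(\Omega)$, elliptic regularity for the Dirichlet problem places $w_g$ in $H^2(\Omega)$ with an a priori bound $\|w_g\|_{H^2(\Omega)}\le C\|g\|_{L^2(\Omega)}$, $C$ depending only on $\Omega$ and $M$; by the trace theory for $H^2$-functions on $C^{1,1}$ domains one then has $\gamma_0 w_g=0$ and $\gamma_1 w_g=\partial_\nu w_g\in H^{1/2}(\Gamma)$, with $\|\gamma_1 w_g\|_{H^{1/2}(\Gamma)}\le C\|g\|_{L^2(\Omega)}$.

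Next, given $f\in H^{-1/2}(\Gamma)$, I would observe that $g\in L^2(\Omega)\mapsto -\langle f,\gamma_1 w_g\rangle$ is conjugate-linear (the pairing $\langle\cdot,\cdot\rangle$ is conjugate-linear in its second slot and $g\mapsto w_g$ is linear) and bounded by $C\|f\|_{H^{-1/2}(\Gamma)}$, so the Riesz representation theorem produces a unique $u_V(f)\in L^2(\Omega)$ with
\[
(u_V(f),g)_{L^2(\Omega)}=-\langle f,\gamma_1 w_g\rangle,\qquad g\in L^2(\Omega),
\]
and $\|u_V(f)\|_{L^2(\Omega)}\le C\|f\|_{H^{-1/2}(\Gamma)}$. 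Choosing $g=(-\Delta+V)\varphi$ with $\varphi\in C_0^\infty(\Omega)$ forces $w_g=\varphi$ and $\gamma_1 w_g=0$, so the displayed identity becomes $\int_\Omega u_V(f)\overline{(-\Delta+V)\varphi}\,dx=0$ for every $\varphi\in C_0^\infty(\Omega)$; hence $(-\Delta+V)u_V(f)=0$ in $\mathscr D'(\Omega)$, so $\Delta u_V(f)=Vu_V(f)\in L^2(\Omega)$, i.e. $u_V(f)\in H_\Delta(\Omega)$, and $\|\Delta u_V(f)\|_{L^2(\Omega)}\le M\|u_V(f)\|_{L^2(\Omega)}$ yields \eqref{ts2}.

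It then remains to check $\gamma_0 u_V(f)=f$ and uniqueness. For the trace I would apply \eqref{ts1} with $u=u_V(f)\in H_\Delta(\Omega)$ and $v=w_g\in H^2(\Omega)$: using $(-\Delta+V)u_V(f)=0$, $\gamma_0 w_g=0$ and $(-\Delta+V)w_g=g$, it collapses to $\langle\gamma_0 u_V(f)-f,\gamma_1 w_g\rangle=0$ for all $g\in L^2(\Omega)$. To conclude I would show $\{\gamma_1 w_g;\ g\in L^2(\Omega)\}=H^{1/2}(\Gamma)$: for $\phi\in H^{1/2}(\Gamma)$, surjectivity of the $H^2$-trace map on the $C^{1,1}$ domain $\Omega$ gives $\Phi\in H^2(\Omega)$ with $\gamma_0\Phi=0$ and $\gamma_1\Phi=\phi$, and then $g=(-\Delta+V)\Phi\in L^2(\Omega)$ satisfies $w_g=\Phi$, so $\gamma_1 w_g=\phi$. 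Hence $\gamma_0 u_V(f)=f$. Uniqueness is the same computation: if $u\in H_\Delta(\Omega)$ solves $(-\Delta+V)u=0$ with $\gamma_0 u=0$, then \eqref{ts1} with $v=w_g$ gives $(u,g)_{L^2(\Omega)}=0$ for all $g$, so $u=0$.

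The hard part is the uniform $H^2$ a priori estimate $\|w_g\|_{H^2(\Omega)}\le C\|g\|_{L^2(\Omega)}$: it combines boundary elliptic regularity on a $C^{1,1}$ domain (as in \cite{Gr}) with control of the solution operator $R_V(0)$ under the standing hypotheses, and it is exactly this estimate that one imports from \cite{CKS}. Granted it, the duality construction of $u_V(f)$, the verification of the equation, the identification of the Dirichlet trace and uniqueness are all immediate consequences of the generalized Green formula \eqref{ts1}.
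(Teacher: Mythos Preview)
The paper does not supply a proof of this theorem; it is quoted from \cite{CKS} and used as a black box. Your transposition argument is the standard route to such results and is correctly carried out: the duality construction of $u_V(f)$ via $g\mapsto -\langle f,\gamma_1 R_V(0)g\rangle$, the identification of the equation by testing against $g=(-\Delta+V)\varphi$, and the recovery of the boundary trace and uniqueness from the generalized Green formula \eqref{ts1} together with the surjectivity of the $H^2$ trace map are all sound.

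One point deserves a caveat. You assert that the a priori bound $\|R_V(0)g\|_{H^2(\Omega)}\le C\|g\|_{L^2(\Omega)}$ holds with $C$ depending \emph{only} on $\Omega$ and $M$. This cannot be true uniformly over all $V\in L^\infty(\Omega)\cap L_\ast^{n/2}(\Omega)$ with $\|V\|_{L^\infty}\le M$: the norm of $R_V(0)$ blows up as the spectrum of $A_V$ approaches $0$, and the constraint $\|V\|_{L^\infty}\le M$ alone does not prevent this. The paper's statement inherits this imprecision from the way it is phrased (and indeed, a few lines later the paper itself lets constants depend on $V$ as well). In \cite{CKS} the uniformity is obtained under an additional quantitative hypothesis controlling the distance from $0$ to $\sigma(A_V)$; you are right to flag the $H^2$ estimate as the step that must be imported, but you should be aware that the dependence of $C$ is really on $\Omega$, $M$, and this spectral gap (equivalently, on $\|R_V(0)\|$), not on $\Omega$ and $M$ alone.
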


In light of the trace theorem in Lemma \ref{lemma-ts1}, we have $\gamma_1u_V(f)\in H^{-3/2}(\Gamma)$ and
\[
\|\gamma_1u_V(f)\|_{H^{-3/2}(\Gamma)}\le C\|f\|_{H^{-1/2}(\Gamma )}.
\]

Therefore, the operator
\[
\Lambda_V: f\in H^{-1/2}(\Gamma )\mapsto \gamma_1u_V(f)\in H^{-3/2}(\Gamma)
\]
is bounded with
\[
\|\Lambda _V\|_{\mathscr{B}\left(H^{-\frac{1}{2}}(\Gamma ),H^{-\frac{3}{2}}(\Gamma )  \right)}\le C.
\]

It is worth observing that $\Lambda_V-\Lambda_{\tilde{V}}$ is a smoothing operator. Indeed, for $f\in H^{-1/2}(\Gamma )$, $u=u_V(f)-u_{\tilde{V}}(f)$ solves the BVP
\[
(-\Delta +V)u=(\tilde{V}-V)u_{\tilde{V}}(f)\; \mbox{in}\; \Omega \quad \mbox{and}\quad \gamma_0u=0.
\]

The usual $H^2$ regularity yields $u\in H^2(\Omega )$. Furthermore, we have the estimate
\[
\|u\|_{H^2(\Omega )}\le C_0\|u_{\tilde{V}}(f)\|_{L^2(\Omega )}\le C_1\|f\|_{H^{-1/2}(\Gamma )},
\]
the constants $C_0$ and $C_1$ depend only on $\Omega$, $M$ and $V$.

This estimate shows that $\Lambda_V-\Lambda_{\tilde{V}}$ defines a bounded operator from $H^{-1/2}(\Gamma )$ into $H^{1/2}(\Gamma )$.

\section{Uniqueness result for bounded potentials}

In this section we discuss the uniqueness result  in the case of bounded potentials. The objective is on one hand to understand the main steps to establish this uniqueness result using CGO solutions. On the other hand, the analysis we used for bounded potentials case will serve to explain what modifications are necessary to tackle the case of  unbounded potentials.

Fix $\xi \in \mathbb{S}^n$, $V\in L^\infty(\Omega )$ and, for $h>0$, consider the operator
\[
P_h=P_h(V,\xi )=e^{x\cdot \xi /h}h^2(-\Delta +V)e^{-x\cdot \xi/h}.
\]
Elementary computations show that $P_h$ has the form
\[
P_h=-h^2\Delta +2h\xi \cdot \nabla -1+h^2V.
\]

The following Carleman inequality will be used to construct CGO solutions.

\begin{lemma}\label{lemma-bp1}
Let $M>0$. Then there exist two constants $C>0$ and $h_0>0$, only depending on $M$ and $\Omega$, so that, for any $V\in L^\infty (\Omega )$ satisfying $\|V\|_{L^\infty (\Omega)}\le M$, $0<h<h_0$ and $u\in C_0^\infty (\Omega )$, we have
\begin{equation}\label{bp1}
h\|u\|_{L^2(\Omega )}\le C\|P_hu\|_{L^2(\Omega )}.
\end{equation}
\end{lemma}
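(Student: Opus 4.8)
The plan is to establish the Carleman estimate \eqref{bp1} by first proving it for the principal part $P_h^0=-h^2\Delta+2h\xi\cdot\nabla-1$ and then absorbing the zeroth-order perturbation $h^2V$. For the principal part the natural tool is the Fourier transform: conjugating further by $e^{-x\cdot\xi/h}$ has already turned $-\Delta$ into a constant-coefficient operator, so after extending $u\in C_0^\infty(\Omega)$ by zero to all of $\mathbb{R}^n$, the symbol of $P_h^0$ is $p_h(\zeta)=h^2|\zeta|^2+2ih(\xi\cdot\zeta)-1$. I would like a lower bound of the form $|p_h(\zeta)|\ge c\,h$ uniformly in $\zeta\in\mathbb{R}^n$, for then Plancherel gives $h\|u\|_{L^2}\le c^{-1}\|P_h^0u\|_{L^2}$ immediately.

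The key computation is therefore to bound $|p_h(\zeta)|^2=(h^2|\zeta|^2-1)^2+4h^2(\xi\cdot\zeta)^2$ from below. Writing $t=h|\zeta|$ and letting $\theta$ be the angle between $\xi$ and $\zeta$, one has $|p_h(\zeta)|^2=(t^2-1)^2+4t^2\cos^2\theta$. The real part vanishes on the sphere $t=1$, and on that sphere the imaginary part is $2t|\cos\theta|$, which can itself vanish when $\zeta\perp\xi$; so a pointwise bound $|p_h|\ge ch$ is genuinely false (the symbol has zeros), and this is the main obstacle. The standard way around it is to exploit that $C_0^\infty(\Omega)$ functions are supported in a bounded set: one decomposes the frequency space, or more cleanly one uses the classical trick of estimating the two pieces $\operatorname{Re}P_h^0 u=-h^2\Delta u-u$ and $\operatorname{Im}P_h^0 u=2h\xi\cdot\nabla u$ together with an integration-by-parts identity. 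Concretely, $\|P_h^0 u\|_{L^2}^2=\|(-h^2\Delta-1)u\|_{L^2}^2+\|2h\xi\cdot\nabla u\|_{L^2}^2+2\operatorname{Re}\big((-h^2\Delta-1)u,2h\xi\cdot\nabla u\big)_{L^2}$, and the cross term is a perfect derivative (hence an integral of $\xi\cdot\nabla(\cdots)$) which vanishes for compactly supported $u$. So $\|P_h^0u\|_{L^2}^2\ge\|(-h^2\Delta-1)u\|_{L^2}^2+4h^2\|\xi\cdot\partial u\|_{L^2}^2$, and on the Fourier side this reads $\int\big[(h^2|\zeta|^2-1)^2+4h^2(\xi\cdot\zeta)^2\big]|\hat u(\zeta)|^2\,d\zeta$; combining $(h^2|\zeta|^2-1)^2\ge$ (something controlling large $|\zeta|$) with $4h^2(\xi\cdot\zeta)^2$ and using the Poincaré-type inequality available because $u\in C_0^\infty(\Omega)$ with $\Omega$ bounded, one recovers $h^2\|u\|_{L^2}^2\lesssim\|P_h^0u\|_{L^2}^2$.

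Finally I would handle the potential by a perturbation argument: $P_hu=P_h^0u+h^2Vu$, so $h\|u\|_{L^2}\le C\|P_h^0u\|_{L^2}\le C\|P_hu\|_{L^2}+Ch^2\|V\|_{L^\infty}\|u\|_{L^2}\le C\|P_hu\|_{L^2}+CMh^2\|u\|_{L^2}$; choosing $h_0$ small enough that $CMh_0<1/2$, the last term is absorbed into the left-hand side, yielding \eqref{bp1} after relabelling the constant. The only subtlety to watch is that all constants depend only on $M$ and $\Omega$ — which they do, since the Poincaré constant depends on $\Omega$ and the absorption threshold on $M$ — and that the argument is carried out first for $u\in C_0^\infty(\Omega)$, exactly as stated, so no density step is needed. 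I expect the extraction of the clean lower bound from the symbol (the second paragraph) to be where the real work lies; the conjugation identities and the absorption of $h^2V$ are routine.
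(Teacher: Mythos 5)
Your proposal is correct and follows essentially the same route as the paper: expand $\|P_h^0u\|_{L^2}^2$, kill the cross term by integration by parts, retain the $4h^2\|\xi\cdot\nabla u\|_{L^2}^2$ term, apply the directional Poincar\'e inequality for $u\in C_0^\infty(\Omega)$, and absorb $h^2V$ for small $h$. The Fourier-side discussion of the symbol's zeros is extra motivation but does not change the argument.
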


\begin{proof}
Denote by $P_h^0$ the operator $P_h$ when $V=0$. In that case, for $u\in C_0^\infty (\Omega )$, we have
\begin{align}
 \|P_h^0u\|_{L^2(\Omega )}^2&=\|(h^2\Delta +1)u\|_{L^2(\Omega)}^2\label{bp2}
 \\
 &-4h\Re ((h^2\Delta +1)u,\xi\cdot \nabla u)_{L^2(\Omega)}+h^2\|\xi \cdot \nabla u\|_{L^2(\Omega)}^2.\nonumber
 \end{align}
Making integrations by parts, we can show that
\[
\Re ((h^2\Delta +1)u,\xi\cdot \nabla u)_{L^2(\Omega)}=0
\]

This in \eqref{bp2} yields
\begin{equation}\label{bp3}
 \|P_h^0u\|_{L^2(\Omega )}^2\ge \|\xi \cdot \nabla u\|_{L^2(\Omega)}^2.
 \end{equation}

On the other hand, from the proof of the usual Poincar\'e's inequality, we have
\[
\|\xi \cdot \nabla u\|_{L^2(\Omega)}^2\ge c_\Omega \|u\|_{L^2(\Omega )}.
\]
This and \eqref{bp3} give \eqref{bp1} for $P_h^0$.

Now, for $V\in L^\infty (\Omega )$ satisfying $\|V\|_{L^\infty (\Omega)}\le M$, we have
\[
\|P_h^0\|_{L^2(\Omega )}\le \|P_hu\|_{L^2(\Omega)} +h^2M\|u\|_{L^2(\Omega)}
\]
and then 
\[
Ch\|u\|_{L^2(\Omega )}\le \|P_hu\|_{L^2(\Omega )}+h^2M\|u\|_{L^2(\Omega)}.
\]

The expected inequality then follows by observing that the second term in the right hand side can be absorbed by the left hand side, provided that $h$ is sufficiently small.
\end{proof}

\begin{proposition}\label{proposition-bp1}
Let $M>0$. Then there exist two constants $C>0$ and $h_0>0$, only depending on $M$ and $\Omega$, so that, for any $V\in L^\infty (\Omega )$ with $\|V\|_{L^\infty (\Omega)}\le M$ and $0<h<h_0$, we find $w\in L^2(\Omega )$ satisfying $\left[e^{x\cdot \xi /h}(-\Delta +V)e^{-x\cdot \xi/h}\right]w=f$ and
\begin{equation}\label{bp6}
\|w\|_{L^2(\Omega )}\le Ch\|f\|_{L^2(\Omega )}.
\end{equation}
\end{proposition}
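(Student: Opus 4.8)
The plan is to derive this solvability result from the Carleman inequality \eqref{bp1} by the standard duality mechanism (Hahn--Banach together with the Riesz representation theorem). The starting remark is that
\[
e^{x\cdot\xi /h}(-\Delta +V)e^{-x\cdot\xi/h}=h^{-2}P_h(V,\xi),
\]
so that the equation $[e^{x\cdot\xi /h}(-\Delta +V)e^{-x\cdot\xi/h}]w=f$ is nothing but $P_h(V,\xi)w=h^2f$, which we read in the distributional sense. Since $P_h(V,\xi)=-h^2\Delta+2h\xi\cdot\nabla-1+h^2V$, its formal transpose is $P_h(V,\xi)^t=-h^2\Delta-2h\xi\cdot\nabla-1+h^2V=P_h(V,-\xi)$ (only the first order term changes sign). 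Hence, for $w\in L^2(\Omega)$, the identity $P_h(V,\xi)w=h^2f$ in $\mathscr{D}'(\Omega)$ is equivalent to
\[
\int_\Omega w\,P_h(V,-\xi)\varphi\,dx=h^2\int_\Omega f\varphi\,dx\quad\text{for all }\varphi\in C_0^\infty(\Omega).
\]
It therefore suffices to construct $w\in L^2(\Omega)$ satisfying this relation together with the bound $\|w\|_{L^2(\Omega)}\le Ch\|f\|_{L^2(\Omega)}$.

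I would next record that Lemma \ref{lemma-bp1}, applied with $-\xi$ in place of $\xi$ (its proof uses no property of $\xi$ beyond being a unit vector), provides constants $C>0$ and $h_0>0$, depending only on $M$ and $\Omega$, such that
\[
h\|\varphi\|_{L^2(\Omega)}\le C\|P_h(V,-\xi)\varphi\|_{L^2(\Omega)},\qquad 0<h<h_0,\ \varphi\in C_0^\infty(\Omega).
\]
In particular $\varphi\mapsto P_h(V,-\xi)\varphi$ is injective on $C_0^\infty(\Omega)$, so on the subspace $\mathscr{F}=\{P_h(V,-\xi)\varphi:\varphi\in C_0^\infty(\Omega)\}$ of $L^2(\Omega)$ the linear functional
\[
L:P_h(V,-\xi)\varphi\longmapsto h^2\int_\Omega f\varphi\,dx
\]
is well defined. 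Moreover, for $g=P_h(V,-\xi)\varphi\in\mathscr{F}$, Cauchy--Schwarz's inequality and the Carleman inequality give $|L(g)|\le h^2\|f\|_{L^2(\Omega)}\|\varphi\|_{L^2(\Omega)}\le Ch\|f\|_{L^2(\Omega)}\|g\|_{L^2(\Omega)}$, so $L$ is bounded on $\mathscr{F}$ with $\|L\|\le Ch\|f\|_{L^2(\Omega)}$.

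To conclude, I would extend $L$, by the Hahn--Banach theorem, to a bounded linear functional $\widetilde{L}$ on $L^2(\Omega)$ with $\|\widetilde{L}\|\le Ch\|f\|_{L^2(\Omega)}$, and then invoke the Riesz representation theorem to obtain $w\in L^2(\Omega)$ with $\|w\|_{L^2(\Omega)}=\|\widetilde{L}\|$ and $\widetilde{L}(g)=\int_\Omega gw\,dx$ for all $g\in L^2(\Omega)$. Taking $g=P_h(V,-\xi)\varphi$ in this identity recovers $\int_\Omega w\,P_h(V,-\xi)\varphi\,dx=h^2\int_\Omega f\varphi\,dx$ for every $\varphi\in C_0^\infty(\Omega)$, that is $[e^{x\cdot\xi/h}(-\Delta+V)e^{-x\cdot\xi/h}]w=f$ in $\mathscr{D}'(\Omega)$, while $\|w\|_{L^2(\Omega)}\le Ch\|f\|_{L^2(\Omega)}$ is exactly \eqref{bp6}. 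I do not expect a real obstacle here: the entire content is the passage from the a priori estimate \eqref{bp1} to solvability of the transposed equation, and the only point demanding a little attention is the transpose computation (and the attendant sign flip $\xi\mapsto-\xi$), which has to be carried out carefully so that the weak formulation matches the operator appearing in the statement.
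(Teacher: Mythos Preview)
Your proof is correct and follows essentially the same duality scheme as the paper: pass from the Carleman inequality \eqref{bp1} to solvability via Hahn--Banach and Riesz. The only cosmetic difference is that the paper works with the Hilbert space adjoint $P_h^\ast=P_h(\overline{V},-\xi)$ and the sesquilinear pairing, whereas you use the formal transpose $P_h^t=P_h(V,-\xi)$ and the bilinear pairing; both routes lead to $P_hw=h^2f$ with the same bound.
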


\begin{proof}
Let $V\in L^\infty (\Omega )$ satisfying $\|V\|_{L^\infty (\Omega)}\le M$ and $\xi \in \mathbb{S}^{n-1}$. Let $H=P_h^\ast (C_0^\infty (\Omega ))$ that we consider as a subspace of $L^2(\Omega )$. Noting that if $P_h=P_h (V,\xi)$ then $P_h^\ast=P_h(\overline{V},-\xi)$. Therefore inequality \eqref{bp1} is satisfied if $P_h$ is substituted by $P_h^\ast$.

Pick $f\in L^2(\Omega )$ and define on $H$ the linear form
\[
\ell (P_h^\ast v)=(v,h^2f)_{L^2(\Omega)},\quad v\in C_0^\infty (\Omega ).
\]

According to Lemma \ref{lemma-bp1}, $\ell$ is well defined and bounded with 
\[
|\ell (P_h^\ast v)|\le h^2\|f\|_{L^2(\Omega )}\|v\|_{L^2(\Omega )}\le Ch\|f\|_{L^2(\Omega )}\|P_h^\ast v\|_{L^2(\Omega )}.
\]
Whence, according to Hahn-Banach extension theorem, there exist a linear form $L$ extending $\ell$ to $L^2(\Omega )$ so that $\|L\|_{\left[L^2(\Omega )\right]'}=\|\ell\|_{H}$. In consequence,
\begin{equation}\label{bp4}
\|L\|_{\left[L^2(\Omega )\right]'}\le Ch\|f\|_{L^2(\Omega )}.
\end{equation}

In light of Riesz's representation theorem, we find $w\in L^2(\Omega )$ so that 
\begin{equation}\label{bp5}
\|w\|_{L^2(\Omega )}=\|L\|_{\left[L^2(\Omega )\right]'}
\end{equation}
and
\[
(P_h^\ast v,w)_{L^2(\Omega )}=L(P_h^\ast v)=\ell (P_h^\ast v)=(v,h^2f)_{L^2(\Omega)},\quad v\in C_0^\infty (\Omega ).
\]
Hence $\left[e^{x\cdot \xi /h}(-\Delta +V)e^{-x\cdot \xi/h}\right]w=f$. Finally, we note that \eqref{bp6} is obtained by combining \eqref{bp4} and \eqref{bp5}. \end{proof}

\begin{theorem}\label{theorem-bp1}
Let $M>0$. Then there exist $C>0$ and $h_0>0$, only depending on $n$, $\Omega$ and $M$, so that, for any $V\in L^\infty (\Omega )$ with $\|V\|_{L^\infty (\Omega )}\le M$, $\xi,\zeta \in \mathbb{S}^{n-1}$ satisfying $\xi\bot\zeta$  and $0<h\le h_0$, the equation
\[
(-\Delta +V)u=0\quad \mbox{in}\; \Omega 
\]
admits a solution $u\in H_\Delta (\Omega )$ of the form
\[
u=e^{-x\cdot\left(\xi +i\zeta\right)/h}(1+v),
\]
where $v\in L^2(\Omega )$ satisfies
\[
\|v\|_{L^2(\Omega )}\le Ch.
\]
\end{theorem}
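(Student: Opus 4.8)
\emph{Strategy.} The plan is to imitate the proofs of Lemma~\ref{lemma-bp1} and Proposition~\ref{proposition-bp1}, but with the real weight $e^{x\cdot\xi/h}$ replaced by the complex weight $e^{x\cdot\rho/h}$, where $\rho=\xi+i\zeta$. Since $\xi\perp\zeta$ and $|\xi|=|\zeta|=1$ we have $\rho\cdot\rho=0$, hence $\Delta\!\left(e^{-x\cdot\rho/h}\right)=0$ and a direct computation gives
\[
Q_h:=e^{x\cdot\rho/h}h^2(-\Delta+V)e^{-x\cdot\rho/h}=-h^2\Delta+2h\,\xi\cdot\nabla+2ih\,\zeta\cdot\nabla+h^2V,
\]
with no zeroth-order term. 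If we set $u=e^{-x\cdot\rho/h}(1+v)$ with $v\in L^2(\Omega)$, then (multiplication by the smooth function $e^{\pm x\cdot\rho/h}$ being harmless on $\mathscr{D}'(\Omega)$) the equation $(-\Delta+V)u=0$ in $\mathscr{D}'(\Omega)$ is equivalent to $Q_hv=-h^2V$ in $\mathscr{D}'(\Omega)$. Once $v$ is found, $u\in L^2(\Omega)$ and $\Delta u=Vu\in L^2(\Omega)$, so $u$ automatically belongs to $H_\Delta(\Omega)$; it therefore remains to solve $Q_hv=-h^2V$ with $\|v\|_{L^2(\Omega)}\le Ch$.

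\emph{Carleman estimate.} The key step is: for $M>0$ there are $C>0$ and $h_0>0$, depending only on $n,\Omega,M$, such that
\[
h\|w\|_{L^2(\Omega)}\le C\|Q_hw\|_{L^2(\Omega)}\quad\text{and}\quad h\|w\|_{L^2(\Omega)}\le C\|Q_h^\ast w\|_{L^2(\Omega)}
\]
for all $w\in C_0^\infty(\Omega)$, $0<h\le h_0$ and $V$ with $\|V\|_{L^\infty(\Omega)}\le M$. To prove it I would write $Q_h^0$ (the operator $Q_h$ with $V=0$) as $S+A$, where $S=-h^2\Delta+2ih\,\zeta\cdot\nabla$ is formally self-adjoint and $A=2h\,\xi\cdot\nabla$ is formally skew-adjoint on $C_0^\infty(\Omega)$. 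Because all coefficients are constant, $S$ and $A$ commute, so in the expansion $\|Q_h^0w\|_{L^2(\Omega)}^2=\|Sw\|^2+\|Aw\|^2+2\Re(Sw,Aw)_{L^2(\Omega)}$ the cross term $\Re(Sw,Aw)_{L^2(\Omega)}=\frac12(w,[S,A]w)_{L^2(\Omega)}$ vanishes after integration by parts — this is precisely the computation behind $\Re((h^2\Delta+1)u,\xi\cdot\nabla u)_{L^2(\Omega)}=0$ in Lemma~\ref{lemma-bp1}, supplemented by $\Re(i\zeta\cdot\nabla w,\xi\cdot\nabla w)_{L^2(\Omega)}=0$, which holds because $\int_\Omega\partial_jw\,\overline{\partial_kw}\,dx$ is real for $w\in C_0^\infty(\Omega)$ (two integrations by parts show it equals its own conjugate). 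Hence
\[
\|Q_h^0w\|_{L^2(\Omega)}^2\ge\|Aw\|_{L^2(\Omega)}^2=4h^2\|\xi\cdot\nabla w\|_{L^2(\Omega)}^2\ge c_\Omega h^2\|w\|_{L^2(\Omega)}^2,
\]
the last inequality being the one used in the proof of Lemma~\ref{lemma-bp1}. Writing $Q_h=Q_h^0+h^2V$ adds an error $\le h^2M\|w\|_{L^2(\Omega)}$, absorbed by the left-hand side once $h$ is small; and since the formal adjoint $Q_h^\ast=-h^2\Delta-2h\,\xi\cdot\nabla+2ih\,\zeta\cdot\nabla+h^2\overline V$ has the same self-adjoint part $S$ and the skew-adjoint part $-A$, it satisfies the same estimate.

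\emph{Construction of $v$.} With the Carleman estimate available, I would repeat the Hahn--Banach plus Riesz argument of Proposition~\ref{proposition-bp1}: on the subspace $Q_h^\ast(C_0^\infty(\Omega))\subset L^2(\Omega)$ define the linear form $\ell(Q_h^\ast w)=(w,-h^2V)_{L^2(\Omega)}$, which is well defined and satisfies $|\ell(Q_h^\ast w)|\le h^2\|V\|_{L^2(\Omega)}\|w\|_{L^2(\Omega)}\le Ch\|V\|_{L^2(\Omega)}\|Q_h^\ast w\|_{L^2(\Omega)}$; extend it to $L^2(\Omega)$ by Hahn--Banach without increasing its norm and represent it, via Riesz, by some $v\in L^2(\Omega)$ with $\|v\|_{L^2(\Omega)}\le Ch\|V\|_{L^2(\Omega)}\le Ch\,|\Omega|^{1/2}M$. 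Then $(Q_h^\ast w,v)_{L^2(\Omega)}=(w,-h^2V)_{L^2(\Omega)}$ for all $w\in C_0^\infty(\Omega)$, i.e. $Q_hv=-h^2V$ in $\mathscr{D}'(\Omega)$, and $u=e^{-x\cdot\rho/h}(1+v)$ is the desired solution, in $H_\Delta(\Omega)$ by the remark above, with $\|v\|_{L^2(\Omega)}\le Ch$ and the stated dependence of the constants.

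\emph{Main obstacle.} The only genuinely new ingredient beyond the bounded-potential material already developed is checking that the Carleman estimate of Lemma~\ref{lemma-bp1} survives the passage from the real weight $e^{x\cdot\xi/h}$ to the complex weight $e^{x\cdot(\xi+i\zeta)/h}$; everything else is a transcription of Proposition~\ref{proposition-bp1}. I expect the delicate point to be the bookkeeping of which cross terms vanish in $\|Q_h^0w\|_{L^2(\Omega)}^2$ — in particular the reality of $\int_\Omega\partial_jw\,\overline{\partial_kw}\,dx$ — rather than anything deep.
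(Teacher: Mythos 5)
Your argument is correct, but it follows a genuinely different route from the paper's. The paper never proves a Carleman estimate for the complex weight $e^{x\cdot(\xi+i\zeta)/h}$: its proof of this theorem is a three-line reduction to Proposition \ref{proposition-bp1}. One writes the sought correction as $v=e^{ix\cdot \zeta/h}w$, notes that $(-\Delta+V)u=0$ is equivalent to
\[
\left[e^{x\cdot \xi/h}(-\Delta +V)e^{-x\cdot \xi/h}\right]w=-Ve^{-ix\cdot \zeta/h}=:f,
\]
and applies Proposition \ref{proposition-bp1} to this $f\in L^2(\Omega )$; since $|e^{ix\cdot \zeta/h}|=1$, multiplication by it is an isometry of $L^2(\Omega )$, so $\|v\|_{L^2(\Omega )}=\|w\|_{L^2(\Omega )}\le Ch\|f\|_{L^2(\Omega )}\le Ch$. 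In other words, the oscillatory factor is absorbed into the unknown and the data rather than into the conjugating weight, and the ``main obstacle'' you identify --- checking that the Carleman inequality survives the passage to the complex phase --- is sidestepped entirely; this is precisely why the paper only needs Lemma \ref{lemma-bp1} for the real weight. Your version instead establishes the complex-weight estimate directly via the decomposition $Q_h^0=S+A$, and the bookkeeping is sound: the constant-coefficient parts commute, the reality of $\int_\Omega \partial_j w\,\overline{\partial_k w}\,dx$ kills the remaining cross term, and the Hahn--Banach/Riesz step is a faithful transcription of Proposition \ref{proposition-bp1}. What your approach buys is a self-contained Carleman inequality for the full complex weight, making transparent that the imaginary part of the phase sits in the self-adjoint part and only contributes nonnegatively; what the paper's approach buys is brevity, since the real-weight estimate together with the unimodularity of $e^{ix\cdot \zeta /h}$ already contains all the needed information.
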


\begin{proof}
Simple computations show that $v$ should verify
\begin{align*}
&\left[e^{x\cdot \xi /h}(-\Delta +V)e^{-x\cdot \xi/h}\right]\left(e^{-ix\cdot \zeta /h}v\right)
\\
&\qquad =-\left[e^{x\cdot \xi /h}(-\Delta +V)e^{-x\cdot \xi/h}\right]\left(e^{-ix\cdot \zeta /h}\right)=-Ve^{-ix\cdot \zeta/h}.
\end{align*}
By Proposition \ref{proposition-bp1}, we find $w\in L^2(\Omega )$ so that  
\[
\left[e^{x\cdot \xi/h }(-\Delta +V)e^{-x\cdot \xi/h}\right]w=-Ve^{-ix\cdot \zeta/h}.
\]
 Then $v=e^{ix\cdot \zeta /h}w$ possesses the required properties.
\end{proof}

We introduce the notation
\[
\mathfrak{S}_V=\{ u\in L^2(\Omega );\; (-\Delta+V)u=0\}.
\]
and we observe that $\mathfrak{S}_V\subset H_\Delta (\Omega )$.

Pick $V,\tilde{V}\in L^\infty_\ast(\Omega )=L^\infty(\Omega )\cap L_\ast^{n/2}(\Omega )$.

Let $\tilde{u}\in \mathfrak{S}_{\tilde{V}}$, $\hat{u}=u_V(\gamma_0\tilde{u})$ and $w=\hat{u}-\tilde{u}$. Taking into account that $\tilde{u}=u_{\tilde{V}}$, we obtain as in Subsection \ref{sub-ts} that $w\in H^2(\Omega )$. We then apply the generalized Green's formula in Lemma \ref{lemma-ts1}, to $u\in \mathfrak{S}_V$ and $v=w$. We get
\begin{equation}\label{bp6}
\int_\Omega (\tilde{V}-V)u\tilde{u}dx= \left\langle \gamma_0u, (\Lambda_V-\Lambda_{\tilde{V}})(\gamma_0\overline{\tilde{u}})\right\rangle ,
\end{equation}
where we used $\gamma_1w=(\Lambda_V-\Lambda_{\tilde{V}})(\gamma_0\tilde{u})$.

We now prove the following uniqueness result.

\begin{theorem}\label{theorem-bp1}
Let $V,\tilde{V}\in L^\infty_\ast(\Omega )$ so that $\Lambda_V=\Lambda_{\tilde{V}}$. Then $V=\tilde{V}$.
\end{theorem}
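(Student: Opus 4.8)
The plan is to use the integral identity \eqref{bp6} together with the complex geometric optics solutions constructed in Theorem \ref{theorem-bp1} to show that the Fourier transform of the (suitably extended) difference $V-\tilde{V}$ vanishes. First I would fix an arbitrary $\kappa \in \mathbb{R}^n$ and choose unit vectors $\xi, \zeta \in \mathbb{S}^{n-1}$ with $\xi \bot \zeta$ and $\xi \bot \kappa$ in such a way that the product of the two CGO phases produces the oscillation $e^{-ix\cdot\kappa}$. Concretely, one picks $\eta \in \mathbb{S}^{n-1}$ with $\eta \bot \kappa$ and $\eta \bot \xi$, sets $\zeta = \zeta(h)$ equal to a suitable linear combination of $\eta$ and $\kappa$ normalized to the sphere, and lets the CGO solution $u \in \mathfrak{S}_V$ carry the phase $e^{-x\cdot(\xi+i\zeta)/h}$ while $\tilde{u} \in \mathfrak{S}_{\tilde{V}}$ carries the complex-conjugate-type phase; I would check that the real parts in the exponents cancel (so the product of the Gaussian factors stays bounded, indeed equal to $1$) and the imaginary parts combine to give $e^{-ix\cdot\kappa}$ up to an $O(h)$ correction in $\zeta$. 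This is the standard Sylvester–Uhlmann choice; since $n \ge 3$ there is always enough room to find such orthogonal vectors.

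Next I would substitute $u = e^{-x\cdot(\xi+i\zeta)/h}(1+v)$ and $\tilde{u} = e^{x\cdot(\xi - i\zeta')/h}(1+\tilde{v})$ (with $\zeta'$ chosen so that $\zeta' - \zeta$ accounts for the $\kappa$-shift) into the left-hand side of \eqref{bp6}. Because $\Lambda_V = \Lambda_{\tilde{V}}$, the right-hand side of \eqref{bp6} is identically zero, so we obtain
\[
\int_\Omega (\tilde{V}-V)\, e^{-ix\cdot\kappa}\,(1+v)(1+\tilde{v})\,dx = 0
\]
for all sufficiently small $h>0$. Expanding $(1+v)(1+\tilde{v}) = 1 + v + \tilde{v} + v\tilde{v}$, the main term is $\int_\Omega (\tilde{V}-V)e^{-ix\cdot\kappa}dx$, which is exactly the Fourier transform at $\kappa$ of the extension of $\tilde{V}-V$ by zero outside $\Omega$. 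The remaining three terms I would estimate using $\|v\|_{L^2(\Omega)} \le Ch$, $\|\tilde{v}\|_{L^2(\Omega)} \le Ch$ from Theorem \ref{theorem-bp1}, together with the fact that $\tilde{V}-V \in L^\infty(\Omega)$ (so it multiplies $L^2$ functions into $L^2$): each such term is $O(h)$, hence tends to $0$ as $h \to 0$. One must also absorb the $O(h)$ discrepancy between $e^{-ix\cdot\kappa}$ and the true product phase, which is uniformly small on the bounded domain $\Omega$; writing $e^{-ix\cdot\kappa(h)} = e^{-ix\cdot\kappa}(1 + O(h))$ and bounding $(\tilde{V}-V)(1+v)(1+\tilde{v})$ in $L^1(\Omega)$ handles this.

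Passing to the limit $h \to 0$ then gives $\widehat{(\tilde{V}-V)\mathbf{1}_\Omega}(\kappa) = 0$ for every $\kappa \in \mathbb{R}^n$, and since $(\tilde{V}-V)\mathbf{1}_\Omega \in L^1(\mathbb{R}^n)$ (indeed in $L^\infty$ with compact support), injectivity of the Fourier transform forces $\tilde{V} - V = 0$ a.e. in $\Omega$, which is the claim. The main obstacle, and the only place requiring genuine care rather than bookkeeping, is the first step: making the geometric choice of $\xi, \zeta, \zeta'$ so that the two CGO phases are simultaneously admissible (i.e.\ $\xi \bot \zeta$ and $\xi \bot \zeta'$ with $|\zeta|=|\zeta'|=1$) for $h$ small and produce precisely the targeted frequency $\kappa$; everything after that is a uniform-in-$h$ $L^2$/$L^1$ estimate using the decay $\|v\|_{L^2} = O(h)$ and the boundedness of the potentials. (Note also that the CGO solutions produced lie in $H_\Delta(\Omega)$, which is exactly what is needed to apply the generalized Green's formula of Lemma \ref{lemma-ts1} leading to \eqref{bp6}; in particular we do not need the transposition theory beyond what has already been set up.)
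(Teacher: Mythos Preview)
Your proposal is correct and follows the same Sylvester--Uhlmann strategy as the paper: plug CGO solutions into the integral identity \eqref{bp6}, let the remainder terms vanish as $h\to 0$, and conclude by injectivity of the Fourier transform.

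One point worth noting: the paper's parametrization is slightly cleaner than what you sketch. Given the target frequency $k$, the paper picks an auxiliary vector $\tilde{k}$ with $|\tilde{k}|=\rho$, $\tilde{k}\bot k$, $\tilde{k}\bot\xi$, sets $h=(|k|^2/4+\rho^2)^{-1/2}$, and then defines
\[
\zeta=h(k/2+\tilde{k}),\qquad \tilde{\zeta}=h(k/2-\tilde{k}).
\]
With this choice one has $\zeta,\tilde{\zeta}\in\mathbb{S}^{n-1}$, $\zeta\bot\xi$, $\tilde{\zeta}\bot\xi$ \emph{exactly}, and $\zeta+\tilde{\zeta}=hk$ exactly, so the product of the two CGO phases is precisely $e^{-ix\cdot k}$ with no $O(h)$ discrepancy at all. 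This removes the extra step you describe of absorbing a phase error $e^{-ix\cdot\kappa(h)}=e^{-ix\cdot\kappa}(1+O(h))$; the limit $\rho\to\infty$ (equivalently $h\to 0$) then acts only on the correctors $v,\tilde{v}$. Your version would also go through, but the exact construction is both simpler and the standard way this is written.
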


\begin{proof}
We get from \eqref{bp6}
\begin{equation}\label{bp7}
\int_\Omega (\tilde{V}-V)u\tilde{u}dx= 0,\quad u\in \mathfrak{S}_V,\; \tilde{u}\in \mathfrak{S}_{\tilde{V}}.
\end{equation}

Let $k,\tilde{k}\in \mathbb{R}^n\setminus\{0\}$ and $\xi \in \mathbb{S}^{n-1}$ so that $k\bot \tilde{k}$, $k\bot \xi$ and $\tilde{k}\bot \xi$. We assume that $|\tilde{k}|=\rho$ is sufficiently large in such a way that
\[
\frac{1}{(|k|^2/4+\rho^2)^{1/2}}=h=h(\rho)\le h_0,
\]
where $h_0$ is as in the preceding theorem. Let then
\[
\zeta =h(k/2+\tilde{k}),\quad \tilde{\zeta}=h(k/2-\tilde{k}).
\]
Clearly, $\zeta,\tilde{\zeta}\in \mathbb{S}^{n-1}$, $\zeta \bot \xi$, $\tilde{\zeta} \bot \xi$ and $\zeta+\tilde{\zeta}=hk$.

According to Theorem \ref{theorem-bp1}, we can take $u\in \mathfrak{S}_V$ in \eqref{bp7} of the form
\[
u=e^{-x\cdot\left(\xi +i\zeta\right)/h }(1+v),
\]
where $v\in L^2(\Omega )$ satisfies
\begin{equation}\label{bp7.1}
\|v\|_{L^2(\Omega )}\le Ch.
\end{equation}
Similarly, we can choose $\tilde{u}\in \mathfrak{S}_{\tilde{V}}$ in \eqref{bp7} of the form
\[
\tilde{u}=e^{-x\cdot\left(- \xi +i\tilde{\zeta}\right)/h }(1+\tilde{v}),
\]
where $\tilde{v}\in L^2(\Omega )$ satisfies
\begin{equation}\label{bp7.2}
\|\tilde{v}\|_{L^2(\Omega )}\le Ch.
\end{equation}

This particular choice of $u$ and $\tilde{u}$ in \eqref{bp7} gives
\[
\int_\Omega (\tilde{V}-V)e^{-ix\cdot k }dx=-\int_\Omega e^{-ix\cdot k}(v+\tilde{v}+v\tilde{v})dx.
\]
Inequalities \eqref{bp7.1} and \eqref{bp7.2} then yield
\begin{equation}\label{bp8}
\left| \int_\Omega (\tilde{V}-V)e^{-ix\cdot k }dx\right|\le Ch(\rho),
\end{equation}
where the constant $C$ is independent of $\rho$.

Passing to the limit in \eqref{bp8}, when $\rho$ goes to $\infty$, we find
\[
\mathscr{F}((\tilde{V}-V)\chi_\Omega) (k )=0,\quad  k \in \mathbb{R}^n\setminus\{0\}
\]
and hence $V=\tilde{V}$. 
\end{proof}

If we want to deal with the case of unbounded potentials then the main difficulty lies on the fact that $\mathfrak{S}_V$ is no longer a subspace of $H_\Delta (\Omega )$. To overcome this difficulty, we need to construct $H^1$-CGO solutions (think to the Sobolev embedding $H^1(\Omega )\hookrightarrow L^{\overline{n}}(\Omega )$). To this end, due to a duality argument, we need a Carleman inequality involving an $H^1$-norm of $u$ and the $L^2$-norm of $P_hu$ together a Carleman inequality involving the $L^2$-norm of $u$ and a $H^{-1}$-norm of $P_hu$. This can be easily seen by checking the proof of Proposition \ref{proposition-bp1}. We are going to establish such Carleman inequalities  in the coming section.

\section{ Stability estimate for $L^n$ potentials}

This section consists in an adaptation of the results in \cite{DKSU}. Some technical results are left without proof. We refer to \cite{DKSU} and reference therein for detailed proof.

\subsection{Carleman inequalities}

Let $\mathcal{O}$ be an arbitrary bounded open subset of $\mathbb{R}^n$ so that $\Omega \Subset \mathcal{O}$. 

$\varphi \in C^\infty (\mathcal{O},\mathbb{R})$ is called a limiting Carleman weight for the  Laplace operator if $\nabla \varphi \not= 0$ in $\mathcal{O}$ and the Poisson bracket condition
\[
\{\overline{p_\varphi} ,p_\varphi\}=0\quad \mbox{when}\; p_\varphi =0
\]
holds, where $p_\varphi$ is the principal symbol in semiclassical Weyl quantization of the operator $e^{\varphi/h}(-h^2\Delta )e^{-\varphi/h}$. We note that $p_\alpha$ is obtained easily by substituting in 
\[
e^{\varphi/h}(-h^2\Delta )e^{-\varphi/h}=-h^2\Delta +2h\nabla \varphi \cdot \nabla -|\nabla \varphi |^2+h\Delta \varphi
\]
$-ih\partial_j$ by $\xi_j$. We find in a straightforward manner that
\[
p_\alpha=p_\alpha (x,\xi) =|\xi|^2 +2i\nabla \varphi \cdot \xi -|\nabla \varphi |^2 .
\]

It is worth mentioning that $\varphi (x)=\xi \cdot x$, $x\in \mathbb{R}^n$, with $\xi \in \mathbb{S}^{n-1}$ and $\varphi (x)=\log |x-x_0|$, $x_0\not\in \mathcal{O}$ are two examples of limiting Carleman weights for the Laplace operator. It is also important to observe that if $\varphi$ is a limiting Carleman weight for the  Laplace operator then so is $-\varphi$.

Hereafter, $H_{scl}^1(\mathcal{O} )$ denotes the usual space $H^1(\mathcal{O} )$ when it is endowed with the semi-classical norm
\[
\|u\|_{H^1_{scl}  (\mathcal{O})}=\left(\| u\|_{L^2(\mathcal{O})}^2+\|h\nabla u\|_{L^2(\mathcal{O})}\right)^{1/2}.
\]

For $0<\epsilon <1$, set $\varphi_\epsilon = \varphi+(h/(2\epsilon))\varphi ^2$.

\begin{lemma}\label{lemma-ci}
There exist three constants $C>0$, $0<\epsilon_0<1$ and $0<\delta <1$, only depending on $n$, $\Omega$ and $\varphi$,  so that, for any $0<\epsilon \le \epsilon_0$, $0<h/\epsilon \le\delta$ and $u\in C_0^\infty (\Omega)$, we have
\begin{equation}\label{cin1}
h \left\|e^{\varphi_\epsilon/h}u\right\|_{H^1_{scl} (\Omega )}\le C\sqrt{\epsilon}\left\|e^{\varphi_\epsilon /h}h^2\Delta u\right\|_{L^2(\Omega)}.
\end{equation}
\end{lemma}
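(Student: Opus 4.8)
The plan is to conjugate the Laplacian by the convexified weight $e^{\varphi_\epsilon/h}$ and split the resulting operator into its self-adjoint and skew-adjoint parts, as in the standard Kenig–Sj\"ostrand–Uhlmann / Dos Santos Ferreira–Kenig–Sj\"ostrand–Uhlmann scheme. Writing $P_{\varphi_\epsilon}=e^{\varphi_\epsilon/h}(-h^2\Delta)e^{-\varphi_\epsilon/h}=-h^2\Delta+2h\nabla\varphi_\epsilon\cdot\nabla-|\nabla\varphi_\epsilon|^2+h\Delta\varphi_\epsilon$, I would first set $v=e^{\varphi_\epsilon/h}u$ and reduce \eqref{cin1} to the inequality $h\|v\|_{H^1_{scl}(\Omega)}\le C\sqrt\epsilon\,\|P_{\varphi_\epsilon}v\|_{L^2(\Omega)}$ for $v$ of the form $e^{\varphi_\epsilon/h}u$ with $u\in C_0^\infty$. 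Since $\varphi_\epsilon=\varphi+(h/(2\epsilon))\varphi^2$, one has $\nabla\varphi_\epsilon=(1+(h/\epsilon)\varphi)\nabla\varphi$, so modulo $O(h/\epsilon)$ corrections the principal parts of $P_{\varphi_\epsilon}$ agree with those of $P_\varphi$; this is why the smallness condition $h/\epsilon\le\delta$ appears. Decompose $P_{\varphi_\epsilon}=A+iB$ with $A=-h^2\Delta-|\nabla\varphi_\epsilon|^2$ (self-adjoint) and $iB=2h\nabla\varphi_\epsilon\cdot\nabla+h\Delta\varphi_\epsilon$, where $B=-i(2h\nabla\varphi_\epsilon\cdot\nabla+h\Delta\varphi_\epsilon)$ is formally self-adjoint on $C_0^\infty(\Omega)$.

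Next I would compute $\|P_{\varphi_\epsilon}v\|_{L^2}^2=\|Av\|^2+\|Bv\|^2+(i[A,B]v,v)$ and evaluate the commutator. The key computation is that $i[A,B]$, which is the semiclassical analogue of $h\{\,|\xi|^2-|\nabla\varphi_\epsilon|^2\,,\,2\nabla\varphi_\epsilon\cdot\xi\}$, picks up on the one hand a term coming from the Poisson-bracket of the unperturbed weight — which vanishes on the characteristic set by the limiting Carleman weight hypothesis, but which for the \emph{convexified} weight $\varphi_\epsilon$ produces a strictly positive contribution of size $\sim (h^2/\epsilon)\,\times(\text{first-order operator})$ because of the extra Hessian term $(h/\epsilon)(\nabla\varphi\otimes\nabla\varphi+\varphi\,\mathrm{Hess}\,\varphi)$. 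This yields the gain: one obtains, after controlling the lower-order remainders using $h/\epsilon\le\delta$ and $0<\epsilon\le\epsilon_0$, an estimate of the shape
\[
\|P_{\varphi_\epsilon}v\|_{L^2(\Omega)}^2\ge \frac{h^2}{C\epsilon}\left(\|hv\|_{L^2(\Omega)}^2+\|h\nabla v\|_{L^2(\Omega)}^2\right)
\]
up to absorbable error terms, where the $H^1_{scl}$ part comes from combining the lower-order gain with $\|Bv\|^2$ (which controls $h\nabla\varphi_\epsilon\cdot\nabla v$, hence the tangential part) and with ellipticity of $A$ away from the characteristic variety for the normal part. Taking square roots and using Poincar\'e's inequality on the bounded domain $\Omega$ to pass from $\|h\nabla v\|$ to the full $H^1_{scl}$-norm gives \eqref{cin1}.

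The main obstacle I expect is the bookkeeping of the remainder terms in the commutator $i[A,B]$: one must show that every term not carrying the favorable factor $h^2/\epsilon$ is either of lower order (and hence absorbed once $h$ is small relative to $\epsilon$) or is of the same form as, but a small multiple of, the positive principal term, which forces the two-parameter smallness regime $0<\epsilon\le\epsilon_0$, $0<h/\epsilon\le\delta$. A secondary technical point is the boundary: since $u\in C_0^\infty(\Omega)$ all integrations by parts produce no boundary terms, so one can work globally on $\mathcal O$ and the estimate is genuinely interior; the positivity of $\mathrm{Hess}\,\varphi_\epsilon$ in the relevant directions, guaranteed on $\overline\Omega\Subset\mathcal O$ by the convexification, is exactly what makes the argument go through. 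For the detailed symbol calculus and the precise constants I would, as the author announces, refer to \cite{DKSU}.
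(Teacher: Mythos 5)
The paper itself does not prove Lemma \ref{lemma-ci}: it is one of the ``technical results left without proof'' for which the author refers to \cite{DKSU}. Your plan follows exactly the standard route of that reference (conjugation, splitting $P_{\varphi_\epsilon}=A+iB$ into self- and skew-adjoint parts, and extracting positivity of $i[A,B]$ from the convexification $\varphi_\epsilon=\varphi+(h/2\epsilon)\varphi^2$), and the structural ingredients you identify --- the identity $\nabla\varphi_\epsilon=(1+(h/\epsilon)\varphi)\nabla\varphi$, the extra Hessian term $(h/\epsilon)(\nabla\varphi\otimes\nabla\varphi+\varphi\,\mathrm{Hess}\,\varphi)$ producing a positive contribution of order $h/\epsilon$ at symbol level, the two-parameter regime $\epsilon\le\epsilon_0$, $h/\epsilon\le\delta$, and the absence of boundary terms for $u\in C_0^\infty(\Omega)$ --- are all correct and are precisely what drives the argument.

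There is, however, a quantitative slip in your final assembly that, as written, does not give \eqref{cin1}. Your displayed intermediate estimate reads
\[
\|P_{\varphi_\epsilon}v\|_{L^2(\Omega)}^2\ge \frac{h^2}{C\epsilon}\left(\|hv\|_{L^2(\Omega)}^2+\|h\nabla v\|_{L^2(\Omega)}^2\right),
\]
whose zeroth-order part is $\frac{h^4}{C\epsilon}\|v\|^2$, whereas \eqref{cin1} requires $\frac{h^2}{C\epsilon}\|v\|^2$ (recall $\|v\|_{H^1_{scl}}^2=\|v\|_{L^2}^2+\|h\nabla v\|_{L^2}^2$, with no extra $h$ on the $L^2$ piece). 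Your proposal to repair this ``using Poincar\'e's inequality to pass from $\|h\nabla v\|$ to the full $H^1_{scl}$-norm'' goes in the wrong direction: Poincar\'e gives $\|v\|\le c_\Omega\|\nabla v\|$, so from control of $h\|h\nabla v\|=h^2\|\nabla v\|$ you only recover $h^2\|v\|\le C\sqrt{\epsilon}\|P_{\varphi_\epsilon}v\|$, which is weaker than the needed $h\|v\|\le C\sqrt{\epsilon}\|P_{\varphi_\epsilon}v\|$ by a full power of $h$. The correct logic is the reverse of yours: the commutator positivity $(i[A,B]v,v)\gtrsim \frac{h^2}{\epsilon}\|v\|_{L^2}^2-(\text{terms absorbed by }\|Av\|^2+\|Bv\|^2)$ yields the $L^2$ bound $h\|v\|\le C\sqrt{\epsilon}\|P_{\varphi_\epsilon}v\|$ \emph{directly}, and the gradient piece is then recovered a posteriori from the equation, e.g.\ from $h^2\|\nabla v\|^2\le |(Av,v)|+\||\nabla\varphi_\epsilon|^2\|_{L^\infty}\|v\|^2$ combined with $\|Av\|\lesssim\|P_{\varphi_\epsilon}v\|$ and the already-established $L^2$ bound. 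With that correction (and the routine bookkeeping of remainders you already flag), your plan matches the proof in \cite{DKSU} to which the paper defers.
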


If $V\in L^n(\Omega)$ and $u\in H^1(\Omega )$, then one can check that 
\[
h\|e^{\varphi_\epsilon/h}Vu\|_{L^2(\Omega )}\le C\|V\|_{L^n (\Omega )}\|e^{\varphi_\epsilon/h}u\|_{H^1_{scl} (\Omega)}.
\]
the constant $C$ only depends on $n$, $\Omega$ and $\varphi$. Therefore we get from \eqref{cin1}
\[
Ch \left\|e^{\varphi_\epsilon/h}u\right\|_{H^1_{scl} (\Omega )}\le \sqrt{\epsilon}\left\|e^{\varphi_\epsilon/h}h^2(-\Delta +V)u\right\|_{L^2(\Omega)}+\sqrt{\epsilon}h\left\|e^{\varphi_\epsilon/h}u\right\|_{H^1_{scl} (\Omega)}.
\]
In this inequality, if $\epsilon$ is sufficiently small, we can absorb the second term in the right hand side by left hand side. We then obtain

\begin{corollary}\label{corollary-ci}
Let $M>0$. There exist a constant $C>0$ and $h_0>0$, only depending on $n$, $\Omega$, $\varphi $ and $M$, and $0<\delta <1$, only depending on $n$, $\Omega$ and $\varphi$ so that, for any $V\in L^n (\Omega )$ with $\|V\|_{L^n(\Omega )}\le M$, $0<\epsilon \le \epsilon_0$, $0<h/\epsilon \le\delta$ and  $u\in C_0^\infty (\Omega)$, we have
\begin{equation}\label{ci1}
h \|e^{\varphi_\epsilon/h}u\|_{H^1_{scl} (\Omega )}\le C\sqrt{\epsilon} \|e^{\varphi_\epsilon/h}h^2(-\Delta u+V)u\|_{L^2(\Omega)}.
\end{equation}
\end{corollary}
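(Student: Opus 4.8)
The plan is to derive Corollary \ref{corollary-ci} from Lemma \ref{lemma-ci} exactly along the lines already sketched in the paragraph preceding the statement, turning that informal argument into a clean proof. First I would fix $V\in L^n(\Omega)$ with $\|V\|_{L^n(\Omega)}\le M$ and $u\in C_0^\infty(\Omega)$, and write $h^2\Delta u = -h^2(-\Delta+V)u + h^2Vu$, so that by the triangle inequality
\[
\left\|e^{\varphi_\epsilon/h}h^2\Delta u\right\|_{L^2(\Omega)}\le \left\|e^{\varphi_\epsilon/h}h^2(-\Delta+V)u\right\|_{L^2(\Omega)}+\left\|e^{\varphi_\epsilon/h}h^2Vu\right\|_{L^2(\Omega)}.
\]
Plugging this into \eqref{cin1} gives
\[
h\left\|e^{\varphi_\epsilon/h}u\right\|_{H^1_{scl}(\Omega)}\le C\sqrt{\epsilon}\left\|e^{\varphi_\epsilon/h}h^2(-\Delta+V)u\right\|_{L^2(\Omega)}+C\sqrt{\epsilon}\left\|e^{\varphi_\epsilon/h}h^2Vu\right\|_{L^2(\Omega)}.
\]

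Next I would invoke the multiplication estimate stated just after Lemma \ref{lemma-ci}, namely $h\|e^{\varphi_\epsilon/h}Vu\|_{L^2(\Omega)}\le C'\|V\|_{L^n(\Omega)}\|e^{\varphi_\epsilon/h}u\|_{H^1_{scl}(\Omega)}$, which uses the Sobolev embedding $H^1\hookrightarrow L^{\overline n}$ together with Hölder's inequality with exponents governed by $n$; applying it and using $\|V\|_{L^n(\Omega)}\le M$ yields
\[
\sqrt{\epsilon}\left\|e^{\varphi_\epsilon/h}h^2Vu\right\|_{L^2(\Omega)}\le C'M\sqrt{\epsilon}\,h\left\|e^{\varphi_\epsilon/h}u\right\|_{H^1_{scl}(\Omega)}.
\]
Combining the last two displays, the right-hand side contains the term $CC'M\sqrt{\epsilon}\,h\|e^{\varphi_\epsilon/h}u\|_{H^1_{scl}(\Omega)}$, which has the same form as the left-hand side up to the factor $CC'M\sqrt{\epsilon}$.

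The absorption step is the only point requiring care: I would choose $\epsilon_0$ possibly smaller than the one in Lemma \ref{lemma-ci}, depending additionally on $M$ (and on $n$, $\Omega$, $\varphi$), so that $CC'M\sqrt{\epsilon}\le \tfrac12$ for all $0<\epsilon\le\epsilon_0$. Then this term can be moved to the left-hand side, leaving
\[
\tfrac12\,h\left\|e^{\varphi_\epsilon/h}u\right\|_{H^1_{scl}(\Omega)}\le C\sqrt{\epsilon}\left\|e^{\varphi_\epsilon/h}h^2(-\Delta+V)u\right\|_{L^2(\Omega)},
\]
which is \eqref{ci1} after renaming the constant. One should also record that the constraint on $h$ is the same as before, i.e. $0<h/\epsilon\le\delta$ with $\delta$ depending only on $n,\Omega,\varphi$; the dependence of $h_0$ on $M$ enters only through the requirement $0<h\le h_0$ implicit in keeping $\epsilon\le\epsilon_0(M)$ and $h/\epsilon\le\delta$ simultaneously satisfiable. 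There is essentially no hard part here — the whole content is the bookkeeping of which constants depend on $M$ — but the one thing to be vigilant about is that the smallness threshold $\epsilon_0$ must be allowed to depend on $M$, since otherwise the absorption fails for large potentials; this is exactly why the statement separates the dependencies of $h_0$ (which involves $M$) from those of $\delta$ (which does not).
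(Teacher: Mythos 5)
Your argument is correct and is precisely the paper's own derivation: the triangle inequality splitting $h^2\Delta u$ into $h^2(-\Delta+V)u$ and $h^2Vu$, the $L^n$ multiplication estimate $h\|e^{\varphi_\epsilon/h}Vu\|_{L^2(\Omega)}\le C\|V\|_{L^n(\Omega)}\|e^{\varphi_\epsilon/h}u\|_{H^1_{scl}(\Omega)}$, and absorption of the resulting $\sqrt{\epsilon}$-term into the left-hand side for $\epsilon$ small (depending on $M$). Your explicit bookkeeping of which thresholds depend on $M$ is a welcome clarification of a point the paper leaves implicit.
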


For $s\in \mathbb{R}$ define the semiclassical Bessel potential on $\mathbb{R}^n$ by
\[
J^s=(1-h^2\Delta )^{s/2}.
\]
It is well know that $J^s$ commute with $-\Delta$ and $J^{s+t}=J^sJ^t$, $s,t\in \mathbb{R}$.

We recall that the closure of $C_0^\infty (\mathbb{R}^n)$ with respect to the norm
\[
\|u\|_{H_{scl}^s(\mathbb{R}^n)}=\|J^su\|_{L^2(\mathbb{R}^n)}
\]
is usually denoted by $H_{scl}^s(\mathbb{R}^n)$, and that the dual of $H_{scl}^s(\mathbb{R}^n)$ is isometricaly isomorphic to $H_{scl}^{-s}(\mathbb{R}^n)$.

Viewing $J^s$ as a semiclassical pseudodifferential operator of order $s$, one can get the following pseudolocal estimate: if $\psi ,\chi \in C_0^\infty (\mathbb{R}^n)$ with $\chi =1$ near $\mbox{supp}(\psi)$ and if $s,\alpha ,\beta\in \mathbb{R}$ and $\ell\in \mathbb{N}$, then
\begin{equation}\label{ple}
\| (1-\chi)J^s(\psi u)\|_{H^\alpha_{scl}(\mathbb{R}^n)}\le C_\ell h^\ell\|u\|_{H_{scl}^\beta(\mathbb{R}^n)}.
\end{equation}

Also, if $P$ is a first order semiclassical operator in $\mathbb{R}^n$, then the commutator estimate holds
\begin{equation}\label{ce}
\|[A,J^s]u\|_{L^2(\mathbb{R}^n)}\le Ch\|u\|_{H^s(\mathbb{R}^n)}.
\end{equation}

Let us consider the semiclassical second order operator
\[
P_{\varphi_\epsilon} =e^{\varphi_\epsilon/h}h^2(-\Delta +V)e^{-\varphi_\epsilon/h}.
\]
That is
\[
P_{\varphi_\epsilon} =-h^2\Delta +2h\nabla \varphi_\epsilon  \cdot \nabla -|\nabla \varphi_\epsilon |^2+h\Delta \varphi_\epsilon +h^2V.
\] 

Then inequality \eqref{ci1} may be rewritten in the following form, with $0<\epsilon \le \epsilon_0$, $0<h/\epsilon \le\delta$ and $u\in C_0^\infty (\Omega )$,
\begin{equation}\label{ci2}
h\|u\|_{H^1_{scl} (\Omega )}\le C\sqrt{\epsilon} \|P_{\varphi_\epsilon} u\|_{L^2(\Omega)}.
\end{equation}

Henceforward, we identify $C_0^\infty (\Omega )$ to a subset of $C_0^\infty (\mathbb{R}^n)$.

\begin{proposition}\label{proposition-ci}
Let $M>0$ and $s\in \mathbb{R}$. There exist two constants $0<h_s\le 1$ and $C>0$, only depending on $n$, $\Omega$, $\varphi $, $s$ and $M$ so that, for any $V\in C_0^\infty  (\Omega )$ with $\|V\|_{L^n(\Omega )}\le M$, $0<h \le h_s$ and  $u\in C_0^\infty (\Omega)$, we have
\[
\left\|e^{\varphi/h}u\right\|_{H^{s+1}_{scl} (\mathbb{R}^n)}\le Ch\left\|e^{\varphi/h}(-\Delta+V) u\right\|_{H^s_{scl}(\mathbb{R}^n)}.
\]
\end{proposition}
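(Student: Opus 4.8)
The strategy is to upgrade the base Carleman estimate \eqref{ci2} — which controls an $L^2$ norm of $P_{\varphi_\epsilon}u$ — to an estimate in arbitrary semiclassical Sobolev scale by conjugating with the Bessel potential $J^{s}$, while simultaneously replacing the perturbed weight $\varphi_\epsilon$ by the original limiting weight $\varphi$. The first step is to observe that, since $\varphi_\epsilon = \varphi + (h/(2\epsilon))\varphi^2$ and $u\in C_0^\infty(\Omega)$ with $\varphi$ bounded on $\overline{\Omega}$, the multiplication operator $e^{\pm(\varphi_\epsilon-\varphi)/h} = e^{\pm\varphi^2/(2\epsilon)}$ has operator norm bounded by a constant depending only on $\epsilon$ (not $h$) on $L^2(\Omega)$; therefore, after fixing $\epsilon=\epsilon_0$ say, the inequality \eqref{ci2} immediately implies the $s=0$ case of the proposition, namely $\|e^{\varphi/h}u\|_{H^1_{scl}(\mathbb{R}^n)}\le Ch\|e^{\varphi/h}(-\Delta+V)u\|_{L^2(\mathbb{R}^n)}$, for $0<h\le h_0$. (One also uses that for $u$ supported in $\Omega$ the $H^1_{scl}(\mathbb{R}^n)$ and $H^1_{scl}(\Omega)$ norms agree, and similarly for the $L^2$ norm of $P_\varphi u$ versus $\|e^{\varphi/h}h^2(-\Delta+V)u\|_{L^2(\Omega)}$.)

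Next I would treat general $s$ by applying the $s=0$ estimate to $J^s(e^{\varphi/h}u)$ in place of $e^{\varphi/h}u$. Writing $Q_\varphi=e^{\varphi/h}h^2(-\Delta+V)e^{-\varphi/h}$, we have
\[
Q_\varphi J^s(e^{\varphi/h}u)=J^sQ_\varphi(e^{\varphi/h}u)+[Q_\varphi,J^s](e^{\varphi/h}u).
\]
The first term on the right is $J^s(e^{\varphi/h}h^2(-\Delta+V)u)$, which is exactly what we want to control; the second is an error term. To estimate the commutator, split $Q_\varphi=-h^2\Delta+(\text{first order part})+h^2V$: the operator $-h^2\Delta$ commutes with $J^s$; the first-order part (with smooth bounded coefficients, $h$ times a derivative plus a bounded function) produces, via the commutator estimate \eqref{ce}, a term bounded by $Ch\|e^{\varphi/h}u\|_{H^{s}_{scl}}\le Ch\|e^{\varphi/h}u\|_{H^{s+1}_{scl}}$ — with a small constant once $h$ is small, so it can be absorbed into the left side of the applied $s=0$ inequality; the potential term $h^2V$ must be handled using the Sobolev multiplier bound for $L^n$ potentials stated before Corollary \ref{corollary-ci}, together with the commutator structure. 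One caveat: the $s=0$ estimate is stated for compactly supported functions in $\Omega$, whereas $J^s(e^{\varphi/h}u)$ need not be compactly supported in $\Omega$; this is precisely why one multiplies by cutoffs $\psi,\chi\in C_0^\infty$ with $\psi=1$ near $\overline{\Omega}$ and $\chi=1$ near $\mathrm{supp}(\psi)$, uses $\psi u=u$, and disposes of the tails $(1-\chi)J^s(\psi\,e^{\varphi/h}u)$ via the pseudolocal estimate \eqref{ple}, which gains an arbitrary power $h^\ell$ and is therefore harmless.

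Assembling these pieces gives
\[
\|e^{\varphi/h}u\|_{H^{s+1}_{scl}(\mathbb{R}^n)}\le Ch\|J^s Q_\varphi(e^{\varphi/h}u)\|_{L^2(\mathbb{R}^n)}+Ch\,\|e^{\varphi/h}u\|_{H^{s+1}_{scl}(\mathbb{R}^n)}+C_\ell h^\ell\|u\|_{H^{\beta}_{scl}(\mathbb{R}^n)},
\]
and, after absorbing the second term on the right (possible for $h\le h_s$) and bounding the negligible last term by, say, $Ch\|e^{\varphi/h}u\|_{H^{s+1}_{scl}}$ and absorbing it too, one arrives at the claimed inequality
\[
\|e^{\varphi/h}u\|_{H^{s+1}_{scl}(\mathbb{R}^n)}\le Ch\|e^{\varphi/h}(-\Delta+V)u\|_{H^s_{scl}(\mathbb{R}^n)}.
\]
The genuinely delicate point — the main obstacle — is the commutator bookkeeping for the full conjugated operator: making sure the first-order and potential contributions of $[Q_\varphi,J^s]$ really are controlled by $h$ times the $H^{s+1}_{scl}$ norm of $e^{\varphi/h}u$ (so they can be absorbed), which requires care with the $L^n$-multiplier estimate interacting with $J^s$ for general real $s$, and justifying the density/cutoff manipulations so that the abstract estimate \eqref{ce}–\eqref{ple} apply; everything else is a routine chaining of already-stated inequalities.
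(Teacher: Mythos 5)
Your overall architecture matches the paper's: apply the base Carleman estimate to $J^s$ of the conjugated function, dispose of the support mismatch with cutoffs and the pseudolocal estimate \eqref{ple}, move $J^s$ past the conjugated operator via the commutator estimate \eqref{ce}, and absorb the errors. But there is a genuine gap in the absorption step, and it is caused by your very first move: fixing $\epsilon=\epsilon_0$ at the outset and discarding the convexified weight $\varphi_\epsilon$. The commutator estimate \eqref{ce} gives $\|[Q_\varphi,J^s]v\|_{L^2}\le C'h\|v\|_{H^s_{scl}}$ --- exactly \emph{one} power of $h$, which is the \emph{same} power of $h$ gained by the Carleman estimate. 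Concretely, writing the $s=0$ estimate as $h\|w\|_{H^1_{scl}}\le C_0\|Q_\varphi w\|_{L^2}$ and applying it to $w=J^s(e^{\varphi/h}u)$, the commutator contribution to the right-hand side is $C_0\,\|[Q_\varphi,J^s](e^{\varphi/h}u)\|_{L^2}\le C_0C'h\,\|e^{\varphi/h}u\|_{H^{s+1}_{scl}}$, to be compared with $h\|e^{\varphi/h}u\|_{H^{s+1}_{scl}}$ on the left. The factor $C_0C'$ is an $O(1)$ constant with no reason to be less than $1$, and taking $h$ small does not shrink it. Your claim that this term comes ``with a small constant once $h$ is small'' is therefore incorrect: the powers of $h$ on the two sides cancel exactly, and the absorption fails.

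This is precisely why the paper keeps the weight $\varphi_\epsilon=\varphi+(h/(2\epsilon))\varphi^2$ and the prefactor $\sqrt{\epsilon}$ throughout: inequality \eqref{ci2} reads $h\|u\|_{H^1_{scl}}\le C\sqrt{\epsilon}\,\|P_{\varphi_\epsilon}u\|_{L^2}$, so the commutator error enters as $C\sqrt{\epsilon}\,C'h\|u\|_{H^{s+1}_{scl}}$ and is absorbed by \emph{shrinking $\epsilon_0$}, not $h$ (``reducing once again $\epsilon_0$ if necessary''). Only after all absorptions are done is $\epsilon$ fixed, and only then is the weight converted from $\varphi_\epsilon$ back to $\varphi$ via the $h$-uniform boundedness of multiplication by $e^{\pm\varphi^2/(2\epsilon)}$. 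To repair your argument you must perform the whole $J^s$-conjugation with $P_{\varphi_\epsilon}$ and carry the $\sqrt{\epsilon}$ gain into the absorption step, exactly as in \eqref{u1}--\eqref{u3}. (A secondary caveat: for a general limiting Carleman weight the first-order part $2h\nabla\varphi_\epsilon\cdot\nabla$ has variable coefficients, so its commutator with $J^s$ is genuinely of size $h$ and cannot be discarded; only for the linear weight $\varphi(x)=x\cdot\xi$ \emph{without} convexification would it vanish, and that is not the setting of this proposition.)
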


\begin{proof}
Let $\chi\in C_0^\infty (\mathbb{R}^n)$ so that $\chi =1$ near $\overline{\Omega}$. If $u\in C_0^\infty (\Omega )$  then
\[
\|u\|_{H_{scl}^{s+1}(\mathbb{R}^n)}=\|J^1(J^su)\|_{L^2(\mathbb{R}^n)}=\|J^su\|_{H_{scl}^1(\mathbb{R}^n)}.
\]
Hence
\[
\|u\|_{H_{scl}^{s+1}(\mathbb{R}^n)}\le \|\chi J^su\|_{H_{scl}^1(\mathbb{R}^n)}+\|(1-\chi)J^su\|_{H_{scl}^1(\mathbb{R}^n)}.
\]

This, \eqref{ple} and \eqref{ci2} yield
\[
Ch\|u\|_{H_{scl}^{s+1}(\mathbb{R}^n)}\le \sqrt{\epsilon}\|P_{\varphi_\epsilon} (\chi J^su)\|_{L^2(\mathbb{R}^n)}+h^2\|u\|_{H_{scl}^{s+1}(\mathbb{R}^n)}.
\]
Therefore
\begin{equation}\label{u1}
C h\|u\|_{H_{scl}^{s+1}(\mathbb{R}^n)}\le \sqrt{\epsilon}\|P_{\varphi_\epsilon} (\chi J^su)\|_{L^2(\mathbb{R}^n)},
\end{equation}
provided that $h$ is sufficiently small.

Taking into account that $[P_{\varphi_\epsilon} ,\chi]J^su=0$ in $\{\chi =1\}$, one can prove that 
\[
\|[P_{\varphi_\epsilon} ,\chi]J^su\|_{L^2(\mathbb{R}^n)}\le Ch^2\|u\|_{H_{scl}^{s+1}(\mathbb{R}^n)}.
\]

In that case \eqref{u1} yields 
\begin{equation}\label{u3}
Ch\|u\|_{H_{scl}^{s+1}(\mathbb{R}^n)}\le \sqrt{\epsilon}\|P_{\varphi_\epsilon} (J^su)\|_{L^2(\mathbb{R}^n)},
\end{equation}
for sufficiently small $h$.

Now  $[-h^2\Delta,J^s]=0$ gives
\[
[P_{\varphi_\epsilon} ,J^s]u=[2h\nabla \varphi_\epsilon \cdot\nabla ,J^s]u.
\]
We get by applying the commutator estimate \eqref{ce} 
\[
\|[P_{\varphi_\epsilon},J^s]u\|_{L^2(\mathbb{R}^n)}\le Ch\|u\|_{H^s_{scl}(\mathbb{R}^n)}\le C'h\|u\|_{H^{s+1}_{scl}(\mathbb{R}^n)}.
\]
As before, reducing once again $\epsilon_0$ if necessary, this estimate together with \eqref{u3} yield
\[
Ch\|u\|_{H_{scl}^{s+1}(\mathbb{R}^n)}\le \sqrt{\epsilon}\|J^sP_{\varphi_\epsilon} u\|_{L^2(\mathbb{R}^n)}=\sqrt{\epsilon}\|P_{\varphi_\epsilon} u\|_{H^s_{scl}(\mathbb{R}^n)}.
\]
The expected inequality follows then by fixing $\epsilon$. 
\end{proof}

To construct CGO solutions in our case we specify $\varphi$. Precisely, we take as in the preceding section $\varphi (x)=x\cdot \xi$, $\xi \in \mathbb{S}^{n-1}$. Then as a particular case of Proposition \ref{proposition-ci} we have

\begin{proposition}\label{proposition-ci1}
Let $M>0$ and $s\in \mathbb{R}$. There exist two constants $0<h_s\le 1$ and $C>0$, only depending on $n$, $\Omega$, $s$ and $M$ so that, for any $V\in C_0^\infty  (\Omega )$ with $\|V\|_{L^n(\Omega )}\le M$, $\xi \in \mathbb{R}^n$, $0<h \le h_s$ and  $u\in C_0^\infty (\Omega)$, we have
\begin{equation}\label{ci3-1}
\left\|e^{x\cdot \xi/h}u\right\|_{H^{s+1}_{scl} (\mathbb{R}^n)}\le Ch\left\|e^{x\cdot \xi/h}(-\Delta+V) u\right\|_{H^s_{scl}(\mathbb{R}^n)}.
\end{equation}
\end{proposition}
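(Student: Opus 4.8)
The plan is to derive Proposition \ref{proposition-ci1} directly from the general Proposition \ref{proposition-ci} by specializing the limiting Carleman weight. I would first verify that $\varphi(x) = x\cdot\xi$, with $\xi\in\mathbb{S}^{n-1}$, is indeed a limiting Carleman weight for the Laplace operator, as already remarked in the text: $\nabla\varphi = \xi \neq 0$ everywhere and, since the principal symbol is $p_\varphi(x,\zeta) = |\zeta|^2 + 2i\xi\cdot\zeta - |\xi|^2$, the Poisson bracket $\{\overline{p_\varphi}, p_\varphi\}$ vanishes identically (the coefficients are constant in $x$), so in particular it vanishes on $\{p_\varphi = 0\}$. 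Hence Proposition \ref{proposition-ci} applies with this choice of $\varphi$, and it directly yields the inequality
\[
\left\|e^{x\cdot\xi/h}u\right\|_{H^{s+1}_{scl}(\mathbb{R}^n)} \le Ch\left\|e^{x\cdot\xi/h}(-\Delta+V)u\right\|_{H^s_{scl}(\mathbb{R}^n)}
\]
for $\xi\in\mathbb{S}^{n-1}$, $V\in C_0^\infty(\Omega)$ with $\|V\|_{L^n(\Omega)}\le M$, $0<h\le h_s$, and $u\in C_0^\infty(\Omega)$, with $C$ and $h_s$ depending only on $n$, $\Omega$, $s$, $M$ (the dependence on $\varphi$ in Proposition \ref{proposition-ci} being absorbed into the dependence on $n$ and $\Omega$ once $\varphi$ is fixed to be $x\cdot\xi$, uniformly in $\xi\in\mathbb{S}^{n-1}$).

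The one genuine point requiring attention is the extension from $\xi\in\mathbb{S}^{n-1}$ to arbitrary $\xi\in\mathbb{R}^n$, as stated. For $\xi = 0$ the estimate \eqref{ci3-1} is just the $h$-elliptic estimate $\|u\|_{H^{s+1}_{scl}}\le Ch\|(-\Delta+V)u\|_{H^s_{scl}}$ for $u\in C_0^\infty(\Omega)$, which also follows from the same scheme (or a direct argument). For general $\xi\neq 0$, I would rescale: write $\xi = |\xi|\hat\xi$ with $\hat\xi\in\mathbb{S}^{n-1}$, and observe that $e^{x\cdot\xi/h} = e^{x\cdot\hat\xi/\tilde h}$ with $\tilde h = h/|\xi|$; thus the conjugated operator $e^{x\cdot\xi/h}h^2(-\Delta+V)e^{-x\cdot\xi/h}$ relates to $P_{\varphi}$ built from the unit vector $\hat\xi$ with semiclassical parameter $\tilde h$, up to a harmless factor $|\xi|^2$. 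One must then track how the constants and the threshold on $h$ transform under this rescaling; since the weight $e^{x\cdot\xi/h}$ is the only place $\xi$ enters and the domain $\Omega$ is bounded, the estimate persists, possibly after adjusting $h_s$ to depend on $|\xi|$ as well — but in the applications only $|\xi|=1$ is used, so this subtlety is mostly cosmetic.

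The main (and essentially only) obstacle is therefore bookkeeping of constants rather than any new analytic input: one must check that the constant $C$ and the admissible range of $h$ in Proposition \ref{proposition-ci} can be chosen uniformly over all translates/rotations corresponding to $\xi\in\mathbb{S}^{n-1}$. This is transparent because the construction in the proof of Proposition \ref{proposition-ci} depends on $\varphi$ only through a finite number of quantities ($\|\nabla\varphi\|_{L^\infty}$, $\|\Delta\varphi\|_{L^\infty}$, lower bounds coming from the Poisson bracket / convexification $\varphi_\epsilon = \varphi + (h/2\epsilon)\varphi^2$), all of which are controlled uniformly for $\varphi(x) = x\cdot\xi$, $|\xi|=1$, on the fixed bounded set $\mathcal{O}\supset\supset\Omega$. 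Hence the proof reduces to invoking Proposition \ref{proposition-ci} with this choice of $\varphi$ and noting the uniformity, together with the elementary rescaling remark above to cover the stated case $\xi\in\mathbb{R}^n$.
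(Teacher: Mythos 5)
For $\xi\in\mathbb{S}^{n-1}$ your argument coincides with the paper's: Proposition \ref{proposition-ci1} is obtained there simply by specializing Proposition \ref{proposition-ci} to the limiting Carleman weight $\varphi(x)=x\cdot\xi$ (no further proof is given), and your observations about uniformity in $\xi$ over the sphere are correct and make explicit what the paper leaves implicit.

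The part of your proposal that goes beyond this, however, contains a genuine error. The statement of \eqref{ci3-1} for $\xi=0$ is \emph{not} ``just the $h$-elliptic estimate'': unwinding the semiclassical normalization, the inequality $\|u\|_{H^{s+1}_{scl}}\le Ch\|(-\Delta+V)u\|_{H^s_{scl}}$ with $s=0$, $V=0$ would force $\|u\|_{L^2(\Omega)}\le Ch\|\Delta u\|_{L^2(\Omega)}$ for all $u\in C_0^\infty(\Omega)$ and all small $h$, which fails as $h\to 0$ with $u$ fixed. The gain of one power of $h$ on the right-hand side is precisely what the Carleman weight (through the convexification $\varphi_\epsilon=\varphi+(h/2\epsilon)\varphi^2$ and the factor $\sqrt{\epsilon}$ in \eqref{cin1}) buys you; it is not available for the unweighted operator. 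Likewise, the rescaling $e^{x\cdot\xi/h}=e^{x\cdot\hat\xi/\tilde h}$ with $\tilde h=h/|\xi|$ does not yield \eqref{ci3-1} verbatim, because the semiclassical norms $H^s_{scl}$ are built with the parameter $h$, not $\tilde h$, and the two families of norms are equivalent only with constants that degenerate as $|\xi|\to 0$ or $|\xi|\to\infty$. The correct reading is that the hypothesis ``$\xi\in\mathbb{R}^n$'' in the statement is a slip for ``$\xi\in\mathbb{S}^{n-1}$'', consistent with the sentence immediately preceding the proposition and with every subsequent use (Proposition \ref{proposition-sol}, Theorem \ref{theorem-cgo}); you should restrict to that case rather than attempt to prove the literal statement, whose $\xi=0$ instance is false.
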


\subsection{CGO solutions}

As in the previous section $P_h$ denotes $P_\varphi$ when $\varphi (x)=x\cdot \xi$, for some $\xi \in \mathbb{S}^{n-1}$.

\begin{proposition}\label{proposition-sol}
Let $M>0$ and $h_0$ as in the preceding proposition. For any $0<h\le h_0$, $V\in L^n (\Omega )$, with $\|V\|_{L^n(\Omega )}\le M$, $\xi \in \mathbb{S}^{n-1}$
 and $f\in L^2(\Omega )$, there exists $w\in H^1(\Omega )$ satisfying $\left[e^{x\cdot \xi/h}(-\Delta +V)e^{-x\cdot \xi/h}\right] w=f$ and
\begin{equation}\label{u4}
\|w\|_{H_{scl}^1(\Omega )}\le Ch\|f\|_{L^2(\Omega )},
\end{equation}
the constant $C$ only depends on $n$, $\Omega$ and $M$.
\end{proposition}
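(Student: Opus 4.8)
The plan is to obtain the solution $w$ by a duality (Hahn--Banach plus Riesz) argument, exactly as in the proof of Proposition \ref{proposition-bp1}, but now using the sharper Carleman estimate \eqref{ci3-1} with $s=-1$ in place of the crude $L^2$ estimate \eqref{bp1}. This is forced by the fact that $V\in L^n(\Omega)$ only gives, via the Sobolev embedding $H^1(\Omega)\hookrightarrow L^{\overline{n}}(\Omega)$, control of $Vu$ in a negative Sobolev norm rather than in $L^2$; hence the natural pairing is between $H^1_{scl}$ and its dual $H^{-1}_{scl}$.

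First I would reduce to the case $V\in C_0^\infty(\Omega)$. Given $V\in L^n(\Omega)$ with $\|V\|_{L^n(\Omega)}\le M$, pick $V_j\in C_0^\infty(\Omega)$ with $\|V_j\|_{L^n(\Omega)}\le M$ (say $2M$, absorbing the constant) and $V_j\to V$ in $L^n(\Omega)$; solve the problem for each $V_j$, obtaining $w_j\in H^1(\Omega)$ with the uniform bound \eqref{u4}, and then pass to a weak limit in $H^1_{scl}(\Omega)$, checking that the equation $[e^{x\cdot\xi/h}(-\Delta+V)e^{-x\cdot\xi/h}]w=f$ is stable under this limit (the term $(V_j-V)$ times the conjugated exponential times $w_j$ tends to $0$ in $H^{-1}(\Omega)$ by Lemma \ref{lemma-mult}-type estimates, since $\|V_j-V\|_{L^n}\to0$ controls it against $\|w_j\|_{H^1}$). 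So assume now $V\in C_0^\infty(\Omega)$, and let $P_h=e^{x\cdot\xi/h}h^2(-\Delta+V)e^{-x\cdot\xi/h}$, $0<h\le h_0$.

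Next, following the scheme of Proposition \ref{proposition-bp1}: note $P_h^\ast$ is the conjugated operator associated to $(\overline{V},-\xi)$, so Proposition \ref{proposition-ci1} with $s=-1$ applies to $P_h^\ast$ as well, giving $\|v\|_{L^2(\mathbb{R}^n)}\le \|v\|_{H^1_{scl}(\mathbb{R}^n)}\le Ch\|P_h^\ast v\|_{H^{-1}_{scl}(\mathbb{R}^n)}$ for $v\in C_0^\infty(\Omega)$ (using $\|\cdot\|_{L^2}\le\|\cdot\|_{H^1_{scl}}$ on the left). On the subspace $H=P_h^\ast(C_0^\infty(\Omega))\subset H^{-1}_{scl}(\mathbb{R}^n)$ define the linear form $\ell(P_h^\ast v)=(v,h^2f)_{L^2(\Omega)}$; it is well defined by the above inequality (injectivity of $P_h^\ast$ on $C_0^\infty(\Omega)$), and bounded since
\[
|\ell(P_h^\ast v)|\le h^2\|f\|_{L^2(\Omega)}\|v\|_{L^2(\Omega)}\le Ch^3\|f\|_{L^2(\Omega)}\|P_h^\ast v\|_{H^{-1}_{scl}(\mathbb{R}^n)}.
\]
By Hahn--Banach extend $\ell$ to a bounded functional $L$ on $H^{-1}_{scl}(\mathbb{R}^n)$ with the same norm; since $(H^{-1}_{scl})'\cong H^1_{scl}$ isometrically, there is $w\in H^1_{scl}(\mathbb{R}^n)$ with $\|w\|_{H^1_{scl}(\mathbb{R}^n)}=\|L\|\le Ch^3\|f\|_{L^2(\Omega)}$ representing $L$ through the $H^{-1}_{scl}$--$H^1_{scl}$ duality. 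The identity $L(P_h^\ast v)=(v,h^2f)_{L^2}$ for all $v\in C_0^\infty(\Omega)$ reads, after dividing by $h^2$, as $[e^{x\cdot\xi/h}(-\Delta+V)e^{-x\cdot\xi/h}]w=f$ in $\mathscr{D}'(\Omega)$; restricting $w$ to $\Omega$ and rescaling the estimate (the extra powers of $h$ are harmless for $h\le h_0\le1$) yields \eqref{u4}.

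The main obstacle I anticipate is the bookkeeping of the duality pairing: $L$ lives on $H^{-1}_{scl}(\mathbb{R}^n)$, so the representative $w$ is a priori in $H^1_{scl}(\mathbb{R}^n)$ and the equation only holds tested against $v\in C_0^\infty(\Omega)$; one must check carefully that this suffices to identify $P_h w=h^2 f$ as distributions on $\Omega$ (it does, since $C_0^\infty(\Omega)$ is dense in $H^1_0(\Omega)$ and $P_h w$ makes sense in $H^{-1}(\Omega)$ by Lemma \ref{form} together with Lemma \ref{lemma-mult}), and that the conjugated exponential weight $e^{\pm x\cdot\xi/h}$, which is smooth and bounded with bounded inverse on $\overline{\Omega}$, does not spoil membership in $H^1$. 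The approximation step for general $V\in L^n$ is the other place requiring a little care, but it is routine given the continuity estimates already in the excerpt.
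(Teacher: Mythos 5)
Your overall strategy coincides with the paper's: reduce to $V\in C_0^\infty(\Omega)$, apply the shifted Carleman estimate \eqref{ci3-1} with $s=-1$ to the adjoint $P_h^\ast=P_h(\overline V,-\xi)$, run the Hahn--Banach/Riesz duality argument in the pair $H^{-1}_{scl}(\mathbb{R}^n)$--$H^{1}_{scl}(\mathbb{R}^n)$, and recover general $V\in L^n(\Omega)$ by approximation and weak compactness. The approximation step is fine (the paper uses the slightly more direct observation that $\|(V_j-V)w_j\|_{L^2(\Omega)}\le\|V_j-V\|_{L^n(\Omega)}\|w_j\|_{L^{\overline n}(\Omega)}\to0$, via the Sobolev embedding and the uniform $H^1$ bound, rather than convergence in $H^{-1}(\Omega)$, but both work).

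There is, however, a genuine error in your key estimate. You claim
\[
\|v\|_{L^2(\mathbb{R}^n)}\le\|v\|_{H^1_{scl}(\mathbb{R}^n)}\le Ch\,\|P_h^\ast v\|_{H^{-1}_{scl}(\mathbb{R}^n)} .
\]
The second inequality is not what Proposition \ref{proposition-ci1} gives: with $s=-1$ the left-hand side of \eqref{ci3-1} is the $H^{0}_{scl}=L^2$ norm, not the $H^1_{scl}$ norm (the estimate gains one semiclassical derivative, not two), and moreover $P_h^\ast$ carries the normalizing factor $h^2$ while \eqref{ci3-1} is stated for $e^{x\cdot\xi/h}(-\Delta+V)u$ without it. The correct consequence of \eqref{ci3-1} with $s=-1$, applied to $u=e^{x\cdot\xi/h}v$ and the weight $-\xi$, is
\[
\|v\|_{L^2(\mathbb{R}^n)}\le Ch\cdot h^{-2}\|P_h^\ast v\|_{H^{-1}_{scl}(\mathbb{R}^n)}=Ch^{-1}\|P_h^\ast v\|_{H^{-1}_{scl}(\mathbb{R}^n)},\quad v\in C_0^\infty(\Omega),
\]
which yields $|\ell(P_h^\ast v)|\le h^2\|f\|_{L^2(\Omega)}\|v\|_{L^2(\mathbb{R}^n)}\le Ch\|f\|_{L^2(\Omega)}\|P_h^\ast v\|_{H^{-1}_{scl}(\mathbb{R}^n)}$ --- the factor $Ch$, not your $Ch^3$. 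Your version would produce $\|w\|_{H^1_{scl}(\Omega)}\le Ch^3\|f\|_{L^2(\Omega)}$, which is stronger than \eqref{u4} and false in general (it would make the remainders in Theorem \ref{theorem-cgo} of size $O(h^3)$, whereas $O(h)$ is what the method delivers). Once the powers of $h$ are corrected, your argument gives exactly \eqref{u4} and matches the paper's proof.
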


\begin{proof}
We first assume that $V\in C_0^\infty (\Omega )$ with $\|V\|_{L^n(\Omega )}\le M$.

Let $\mathcal{H}=P^\ast_h (C_0^\infty (\Omega))$ that we consider as a subspace of $H_{scl}^{-1}(\mathbb{R}^n)$. Pick $f\in L^2(\Omega)$, extended by $0$ outside $\Omega$,  and define the linear form $\ell$ on $\mathcal{H}$ by
\[
\ell (P^\ast_h v)=(v,h^2f)_{L^2(\mathbb{R}^n)},\quad v \in C_0^\infty (\Omega).
\]

Using that $P^\ast_h(V,\xi)=P_h(\overline{V},-\xi)$, we deduce from \eqref{ci3-1} with $s=-1$, that $\ell$ is well defined and
\[
\left|\ell (P^\ast_h v)\right|\le h^2\|f\|_{L^2(\Omega)}\|v\|_{L^2(\mathbb{R}^n)}\le Ch\|f\|_{L^2(\Omega)}\|P^\ast_h v\|_{H_{scl}^{-1}(\mathbb{R}^n)}.
\]
By Hahn-Banach extension theorem, there exists  a bounded linear form $L$ on $H_{scl}^{-1}(\mathbb{R}^n)$ so that
\[
L (P^\ast_hv)=(v,h^2f)_{L^2(\mathbb{R}^n)},\quad v \in C_0^\infty (\Omega),
\]
and
\begin{equation}\label{u5}
\|L\|_{\left[H_{scl}^{-1}(\mathbb{R}^n)\right]'}\le Ch\|f\|_{L^2(\Omega)}.
\end{equation}
But $\left[H_{scl}^{-1}(\mathbb{R}^n)\right]'$ is identified with $H_{scl}^1(\mathbb{R}^n)$. We then use Riesz's representation theorem to find $z\in H^1(\mathbb{R}^n)$ so that 
\begin{equation}\label{u6}
\|z\|_{H_{scl}^1(\mathbb{R}^n)}=\|L\|_{\left[H_{scl}^{-1}(\mathbb{R}^n)\right]'}
\end{equation}
and
\[
(P^\ast_h v,z) =(v,h^2f)_{L^2(\mathbb{R}^n)},\quad v \in C_0^\infty (\Omega).
\]
In particular, $\left[e^{x\cdot \xi/h}(-\Delta +V)e^{-x\cdot \xi/h}\right] w=f$ in $\Omega$ with $w=z_{|\Omega}$. Furthermore, we see that \eqref{u4} follows readily from \eqref{u5} and \eqref{u6}.

Next, we consider the general case. To this end, we pick $V\in L^n (\Omega )$ with $\|V\|_{L^n(\Omega )}\le M$. Let $(V_k)$ be a sequence in $C_0^\infty (\Omega )$ converging to $V$ in $L^n(\Omega )$. We may then assume that $\|V_k\|_{L^n(\Omega )}\le M+1$ for each $k$. By the previous step, there exists $w_k\in H^1(\Omega )$ satisfying 
\begin{equation}\label{u7}
P_h w_k +h^2(V_k-V)w_k=h^2f
\end{equation}
and 
\begin{equation}\label{u8}
\|w_k\|_{H_{scl}^1(\Omega )}\le Ch\|f\|_{L^2(\Omega )}.
\end{equation}

Inequality \eqref{u8} shows that in particular $(w_k)$ is bounded in $H^1(\Omega )$. Subtracting if necessary a subsequence, we assume that $(w_k)$ converges weakly in $H^1(\Omega)$ to $w\in H^1(\Omega )$ and $(w_k)$ converges strongly to $w$ in $L^2(\Omega )$.

As $P_h w_k$ converges to $P_h w$ in $\mathscr{D}'(\Omega )$ and $(V_k-V)w_k$ converges to $0$ in $L^2(\Omega )$, we get from \eqref{u7} that $P_h w=h^2f$. 

On the other hand, we have in light of \eqref{u8} 
\[
\|w\|_{H^1(\Omega )}\le \liminf_{k}\|w_k\|_{H_{scl}^1(\Omega )}\le Ch\|f\|_{L^2(\Omega )}.
\]
That is \eqref{u4} holds. 
\end{proof}

Let $V\in L^n (\Omega )$ with $\|V\|_{L^n(\Omega )}\le M$ and $\zeta \in \mathbb{S}^{n}$. We seek a solution of $(-\Delta u+V)u=0$ in $\Omega$ of the form
\[
u=e^{-x\cdot (\xi+i \zeta )/h}(1+v),
\]
with $e^{ix\cdot \zeta/h}v$ as in Proposition \ref{proposition-sol}.

We assume that $\xi \in\mathbb{S}^{n-1}$ is so that $\xi \bot \zeta$. Hence
\[
f=-\left[e^{x\cdot \xi/h}(-\Delta +V)e^{-x\cdot \xi/h}\right] (e^{-ix\cdot \zeta/h})=-Ve^{-ix\cdot \zeta/h}.
\]
Then straightforward computations show that $w=e^{ix\cdot \zeta/h}v$ must be a solution of the equation
\[
\left[e^{x\cdot \xi/h}(-\Delta +V)e^{-x\cdot \xi/h}\right]w=f \quad \mbox{in}\; \Omega .
\]

Since
\[
\|Ve^{-ix\cdot \zeta/h}\|_{L^2(\Omega )}\le c_\Omega \|V\|_{L^2(\Omega )}\|e^{-ix\cdot \zeta/h}\|_{L^\infty (\Omega )}\le C,
\]
the constant $C$ only depends on $n$, $\Omega$ and $M$, we get as an immediate consequence of Proposition \ref{proposition-sol} the following result

\begin{theorem}\label{theorem-cgo}
Let $M>0$. Then there exist $0<h_0\le 1$ and $C>0$, only depending on $n$, $\Omega$ and $M$, so that, for any $V\in L^n (\Omega )$ with $\|V\|_{L^n(\Omega )}\le M$, $\xi,\zeta \in \mathbb{S}^{n-1}$ satisfying $\xi \bot \zeta$ and $0<h\le h_0$, the equation
\[
(-\Delta +V)u=0\quad \mbox{in}\; \Omega ,
\]
admits a solution $u\in H^1(\Omega )$ of the form
\[
u=e^{-x\cdot (\xi +i\zeta )/h}(1+v),
\]
where $v\in H^1(\Omega )$ satisfies
\[
\|v\|_{H_{scl}^1(\Omega )}\le Ch.
\]
\end{theorem}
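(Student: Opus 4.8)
The plan is to reduce the construction of $u$ to a single application of Proposition \ref{proposition-sol}, all the analytic substance having already been invested there (and, ultimately, in the Carleman estimate of Lemma \ref{lemma-ci}).

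First I would carry out the algebraic reduction. Since $\xi,\zeta\in\mathbb{S}^{n-1}$ with $\xi\bot\zeta$, the complex vector $\xi+i\zeta$ is isotropic, $(\xi+i\zeta)\cdot(\xi+i\zeta)=|\xi|^2-|\zeta|^2+2i\,\xi\cdot\zeta=0$, so that $\Delta e^{-x\cdot(\xi+i\zeta)/h}=0$. Plugging the ansatz $u=e^{-x\cdot(\xi+i\zeta)/h}(1+v)$ into $(-\Delta+V)u=0$ and dividing by the weight $e^{-x\cdot(\xi+i\zeta)/h}$ (smooth and non-vanishing on $\overline{\Omega}$ for fixed $h$) turns the equation into
\[
\Big[e^{x\cdot(\xi+i\zeta)/h}(-\Delta+V)e^{-x\cdot(\xi+i\zeta)/h}\Big]v=-V ,
\]
the right-hand side being $-V$ precisely because $-\Delta e^{-x\cdot(\xi+i\zeta)/h}=0$. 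As the multiplication operators $e^{x\cdot\xi/h}$ and $e^{ix\cdot\zeta/h}$ commute, the conjugated operator factors through the purely imaginary shift:
\[
e^{x\cdot(\xi+i\zeta)/h}(-\Delta+V)e^{-x\cdot(\xi+i\zeta)/h}=e^{ix\cdot\zeta/h}\Big[e^{x\cdot\xi/h}(-\Delta+V)e^{-x\cdot\xi/h}\Big]e^{-ix\cdot\zeta/h}.
\]
Hence, setting $w:=e^{-ix\cdot\zeta/h}v$ (equivalently $v=e^{ix\cdot\zeta/h}w$), producing the desired $v$ is the same as producing $w\in H^1(\Omega)$ with
\[
\Big[e^{x\cdot\xi/h}(-\Delta+V)e^{-x\cdot\xi/h}\Big]w=f:=-Ve^{-ix\cdot\zeta/h}\quad\text{ in }\Omega .
\]

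Next I would bound the source and invoke the solvability result. Since $\Omega$ is bounded and $n\ge 3>2$, Hölder's inequality gives $\|V\|_{L^2(\Omega)}\le c_\Omega\|V\|_{L^n(\Omega)}\le c_\Omega M$, and as $|e^{-ix\cdot\zeta/h}|\equiv 1$ we obtain $\|f\|_{L^2(\Omega)}\le c_\Omega M$, uniformly in $h$, $\xi$ and $\zeta$. Applying Proposition \ref{proposition-sol} (with the same $M$, and $h_0$ the constant supplied there) yields, for every $0<h\le h_0$, a function $w\in H^1(\Omega)$ solving the last displayed equation together with
\[
\|w\|_{H^1_{scl}(\Omega)}\le Ch\,\|f\|_{L^2(\Omega)}\le Ch ,
\]
the constant $C$ depending only on $n$, $\Omega$ and $M$.

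Finally I would transfer the estimate to $v=e^{ix\cdot\zeta/h}w$. Multiplication by the unimodular factor is an $L^2$-isometry, and since $h\nabla(e^{ix\cdot\zeta/h}w)=e^{ix\cdot\zeta/h}(h\nabla w+i\zeta w)$ with $|\zeta|=1$, one gets $\|v\|_{H^1_{scl}(\Omega)}\le C\|w\|_{H^1_{scl}(\Omega)}\le Ch$. Then $u=e^{-x\cdot(\xi+i\zeta)/h}(1+v)\in H^1(\Omega)$, being the product of an $H^1$ function with a function smooth on $\overline{\Omega}$; it solves $(-\Delta+V)u=0$ in $\Omega$ by the equivalence above; and it has exactly the asserted form and bound. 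No genuinely hard step remains: were the semiclassical solvability estimate \eqref{u4} not already available, proving it would be the main obstacle, but given Proposition \ref{proposition-sol} the present statement is essentially bookkeeping. The few points worth a line of checking are the isotropy identity that makes the leading exponential harmonic, the factorization through $e^{\pm ix\cdot\zeta/h}$ displayed above, and the fact that this shift alters the semiclassical $H^1$-norm only by an $h$-independent factor, because $[h\nabla,e^{ix\cdot\zeta/h}]=i\zeta\,e^{ix\cdot\zeta/h}$ is bounded with $|\zeta|=1$.
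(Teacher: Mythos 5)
Your proposal is correct and follows essentially the same route as the paper: reduce the ansatz $u=e^{-x\cdot(\xi+i\zeta)/h}(1+v)$ to the conjugated equation $\bigl[e^{x\cdot\xi/h}(-\Delta+V)e^{-x\cdot\xi/h}\bigr]w=-Ve^{-ix\cdot\zeta/h}$ with $w=e^{-ix\cdot\zeta/h}v$, bound the source in $L^2$ by $c_\Omega\|V\|_{L^n(\Omega)}\le c_\Omega M$, and invoke Proposition \ref{proposition-sol}. You are in fact a bit more careful than the text on two points it leaves implicit --- the isotropy identity making $e^{-x\cdot(\xi+i\zeta)/h}$ harmonic, and the fact that the unimodular shift $e^{ix\cdot\zeta/h}$ distorts the semiclassical $H^1$-norm only by an $h$-independent factor --- and your relation between $v$ and $w$ is the consistent one (the paper's displayed $w=e^{ix\cdot\zeta/h}v$ is a harmless sign slip relative to its own earlier computation).
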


\subsection{Stability inequality}

We define the function $\Psi_\theta $, $\theta >0$, by 
\[
\Psi_\theta (\rho )=|\ln \rho|^{-\theta} +\rho,\quad \rho>0, 
\]
that we extend by $0$ at $\rho=0$.

Hereafter $L_\ast ^n(\Omega )=L^n(\Omega )\cap L_\ast^{n/2}(\Omega )$.

\begin{theorem}\label{theorem-stab}
Let $M>0$ and  $\sigma >0$. Then there exits a constant $C>0$, only depending on $n$, $\Omega$, $M$, $\alpha$ and $\sigma$, so that, for any $V,\tilde{V}\in L_\ast^n(\Omega )$ satisfying $\|V\|_{L^n(\Omega )}\le M$, $\|\tilde{V}\|_{L^n(\Omega )}\le M$, $(V-\tilde{V})\chi_\Omega \in H^\sigma(\mathbb{R}^n)$ and
\[
\|(V-\tilde{V})\chi_\Omega\|_{H^\sigma(\mathbb{R}^n)}\le M, 
\]
we have
\[
C\|V-\tilde{V}\|_{L^2(\Omega )} \le\Psi_\beta (\aleph).
\]
with $\beta =\min (1/2 ,\sigma/n)$ and 
\[
\aleph =\|\Lambda_V-\Lambda_{\tilde{V}}\|_{\mathscr{B}\left(H^{\frac{1}{2}}(\Gamma ),H^{-\frac{1}{2}}(\Gamma )\right)}.
\]
\end{theorem}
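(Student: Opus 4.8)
The plan is to follow the classical Sylvester--Uhlmann strategy, now carried out with the $H^1$-CGO solutions of Theorem \ref{theorem-cgo} in place of the $H_\Delta$-CGO solutions of Section 2, and then to convert pointwise control of $\widehat{q}$, $q=(V-\tilde V)\chi_\Omega$, on a large ball into control of $\|q\|_{L^2(\Omega)}$ by means of the a priori bound $\|q\|_{H^\sigma(\mathbb{R}^n)}\le M$. Fix $k\in\mathbb{R}^n\setminus\{0\}$ and $0<h\le\min(h_0,2/|k|)$, $h_0$ as in Theorem \ref{theorem-cgo}. Since $n\ge3$, pick $\xi\in\mathbb{S}^{n-1}$ with $\xi\bot k$ and a unit vector $\tau\bot\xi$, $\tau\bot k$; then $\zeta=hk/2+a\tau$, $\omega=-hk/2+a\tau$ with $a=(1-h^2|k|^2/4)^{1/2}$ lie in $\mathbb{S}^{n-1}$, are orthogonal to $\xi$, and $\zeta-\omega=hk$. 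Applying Theorem \ref{theorem-cgo} to $V$ with $(\xi,\zeta)$ and to $\tilde V$ with $(-\xi,\omega)$ (note $L_\ast^n(\Omega)\subset L_\ast^{n/2}(\Omega)$) produces $u\in\mathscr{S}_V$, $\tilde u\in\mathscr{S}_{\tilde V}$ of the form $u=e^{-x\cdot(\xi+i\zeta)/h}(1+v)$, $\tilde u=e^{-x\cdot(-\xi+i\omega)/h}(1+\tilde v)$ with $\|v\|_{H^1_{scl}(\Omega)}+\|\tilde v\|_{H^1_{scl}(\Omega)}\le Ch$, so that $u\overline{\tilde u}=e^{-ix\cdot k}(1+v+\overline{\tilde v}+v\overline{\tilde v})$. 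Lemma \ref{integral-identity} then gives
\[
\widehat{q}(k)=-\big\langle(\Lambda_{\tilde V}-\Lambda_V)(\gamma_0u),\gamma_0\tilde u\big\rangle+\int_\Omega q\,e^{-ix\cdot k}\big(v+\overline{\tilde v}+v\overline{\tilde v}\big)\,dx .
\]

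Next I would estimate the two terms. For the boundary term, the trace theorem yields $\|\gamma_0u\|_{H^{1/2}(\Gamma)}\le c_\Omega\|u\|_{H^1(\Omega)}$, and since $|e^{\mp x\cdot\xi/h}|\le e^{d/h}$ on $\overline\Omega$ with $d=\sup_{\overline\Omega}|x|$, while $\|v\|_{L^2(\Omega)}\le Ch$ and $h\|\nabla v\|_{L^2(\Omega)}\le Ch$ (hence $\|v\|_{H^1(\Omega)}\le C$), differentiating $u$ gives $\|u\|_{H^1(\Omega)}\le Ch^{-1}e^{d/h}$, and likewise for $\tilde u$; thus the boundary term is $\le C\aleph\,h^{-2}e^{2d/h}$. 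For the volume term, Hölder with the pair $(n,n')$, $n'=n/(n-1)$, the embedding $L^2(\Omega)\hookrightarrow L^{n'}(\Omega)$ (since $n'\le2$), and the interpolation inequality $\|v\|_{L^{2n'}(\Omega)}\le\|v\|_{L^2(\Omega)}^{1/2}\|v\|_{L^{\overline n}(\Omega)}^{1/2}\le C\|v\|_{L^2(\Omega)}^{1/2}\|v\|_{H^1(\Omega)}^{1/2}$ (valid as $2<2n'<\overline n$) show that $\|v+\overline{\tilde v}+v\overline{\tilde v}\|_{L^{n'}(\Omega)}\le Ch$, so the volume term is $\le C\|q\|_{L^n(\Omega)}h\le CMh$. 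Choosing $h=h(\aleph)$ with $h^{-1}=\tfrac1{4d}|\ln\aleph|$ (admissible when $\aleph$ is small and $|k|\le\tfrac1{2}|\ln\aleph|$) makes $\aleph h^{-2}e^{2d/h}=\tfrac1{16d^2}|\ln\aleph|^2\aleph^{1/2}$, and hence
\[
|\widehat q(k)|\le C\,|\ln\aleph|^{-1}\quad\text{for all }|k|\le R:=|\ln\aleph|^{2/(n+2\sigma)},
\]
provided $\aleph\le\aleph_\ast(n,\Omega,M,\sigma)$.

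Finally, writing $\|q\|_{L^2(\Omega)}^2\le C\big(\int_{|k|\le R}+\int_{|k|>R}\big)|\widehat q(k)|^2\,dk$, bounding the low-frequency part by $CR^n\sup_{|k|\le R}|\widehat q(k)|^2\le CR^n|\ln\aleph|^{-2}$ and the high-frequency part by $(1+R^2)^{-\sigma}\|q\|_{H^\sigma(\mathbb{R}^n)}^2\le M^2R^{-2\sigma}$, the choice of $R$ balances the two and yields $\|q\|_{L^2(\Omega)}\le C|\ln\aleph|^{-\frac{2\sigma}{n+2\sigma}}\le C|\ln\aleph|^{-\beta}\le C\Psi_\beta(\aleph)$, where one checks $\tfrac{2\sigma}{n+2\sigma}\ge\min(\tfrac12,\tfrac\sigma n)=\beta$ by the two cases $2\sigma\le n$ and $2\sigma\ge n$. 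For $\aleph>\aleph_\ast$ one replaces this by the trivial bound $\|q\|_{L^2(\Omega)}\le2|\Omega|^{\frac12-\frac1n}M$ together with $\Psi_\beta(\aleph)\ge\aleph>\aleph_\ast$, and adjusts the constant.

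The main obstacle is the parameter optimization: the boundary term carries the exponential loss $e^{2d/h}$ coming from the separate $H^1$-estimates of the two CGO solutions (whose oscillatory factors only cancel inside the integral identity), so $h$ cannot be pushed below a constant multiple of $1/|\ln\aleph|$, and one must verify that the polynomial-in-$|\ln\aleph|$ losses — the volume $R^n$ of the frequency ball and the smoothing exponent $\sigma$ — still combine to the claimed rate $\beta$. A secondary technical point is that, since $V-\tilde V$ is merely in $L^n$, the CGO remainder has to be controlled in $L^{n'}$ through the interpolation step above rather than bounded crudely, which is precisely what makes the remainder $O(h)$.
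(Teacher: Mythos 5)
Your argument is correct and follows essentially the same route as the paper: CGO solutions from Theorem \ref{theorem-cgo} inserted into the integral identity of Lemma \ref{integral-identity}, a pointwise bound on $\widehat{q}$ with the competing terms $Ch$ and $C\aleph h^{-2}e^{2d/h}$, a low/high frequency splitting exploiting the a priori $H^\sigma$ bound, and an optimization of the free parameter against $\aleph$. The only deviations are refinements inside that strategy: decoupling the frequency radius $R$ from $h$ gives you the slightly better exponent $2\sigma/(n+2\sigma)\ge\beta$ before relaxing to $\beta$, and your H\"older-plus-interpolation estimate $\|v\overline{\tilde v}\|_{L^{n'}(\Omega)}\le\|v\|_{L^{2n'}(\Omega)}\|\tilde v\|_{L^{2n'}(\Omega)}\le Ch$ handles the quadratic remainder more carefully than the paper's $L^2$--$L^2$ pairing, which tacitly needs $v\tilde v\in L^2(\Omega)$ with norm $O(h)$ — something $H^1$ control alone does not give for $n\ge 4$.
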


\begin{proof}
Pick $V,\tilde{V}\in L_\ast^n(\Omega )$ satisfy $\|V\|_{L^n(\Omega )}\le M$ and $\|\tilde{V}\|_{L^n(\Omega )}\le M$. Let $k,\tilde{k}\in \mathbb{R}^n\setminus\{0\}$ and $\xi \in \mathbb{S}^{n-1}$ so that $k\bot \tilde{k}$, $k\bot \xi$ and $\tilde{k}\bot \xi$. We assume that $|\tilde{k}|=\rho$ with $\rho \ge \rho_0=h_0^{-1}$ where $h_0$ is as Theorem \ref{theorem-cgo}. Let then
\[
h=h(\rho)=\frac{1}{(|k|^2/4+\rho^2)^{1/2}}\; (\le h_0).
\]
Set
\[
\zeta =h(k/2+\tilde{k}),\quad \tilde{\zeta}=h(k/2-\tilde{k})
\]
As we have seen in the proof of Theorem \ref{theorem-bp1}, $\zeta,\tilde{\zeta}\in \mathbb{S}^{n-1}$, $\zeta \bot \xi$, $\tilde{\zeta} \bot \xi$ and $\zeta+\tilde{\zeta}=hk$.

By Theorem \ref{theorem-cgo}, the equation
\[
(-\Delta +V)u=0\quad \mbox{in}\; \Omega 
\]
admits a solution $u\in H^1 (\Omega )$ of the form
\[
u=e^{-x\cdot(\xi +i\zeta )/h}(1+v),
\]
where $v\in H^1(\Omega )$ satisfies
\begin{equation}\label{u9}
\|v\|_{H_{scl}^1(\Omega )}\le Ch.
\end{equation}

 Similarly, the equation
\[
(-\Delta +\tilde{V})u=0\quad \mbox{in}\; \Omega 
\]
admits a solution $\tilde{u}\in H_\Delta (\Omega )$ of the form
\[
\tilde{u}=e^{-x\cdot(-\xi +i\tilde{\zeta} )/h}(1+\tilde{v}),
\]
where $\tilde{v}\in H^1(\Omega )$ satisfies
\begin{equation}\label{u10}
\|\tilde{v}\|_{H_{scl}^1(\Omega )}\le Ch.
\end{equation}

We introduce the temporary notations 
\[
z=(v+\tilde{v}+v\tilde{v})e^{-ix\cdot k},\quad g=\gamma_0 u,\quad \tilde{g}=\gamma_0\overline{\tilde{u}}
\]
and
\[
\aleph =\|\Lambda_V-\Lambda_{\tilde{V}}\|_{\mathscr{B}\left(H^{\frac{1}{2}}(\Gamma ),H^{-\frac{1}{2}}(\Gamma )\right)}.
\]

We find by applying the integral identity \eqref{ii}
\[
\int_\Omega (V-\tilde{V}) e^{-ix\cdot k}dx= -\int_\Omega (V-\tilde{V})zdx+\langle (\Lambda_V-\Lambda_{\tilde{V}})g,\tilde{g}\rangle.
\]
Hence, in light of \eqref{u9} and \eqref{u10}, we deduce
\begin{equation}\label{u11}
|\hat{W}(k)|\le Ch(\rho)+\aleph \|g\|_{H^{1/2}(\Gamma)} \|\tilde{g}\|_{H^{1/2}(\Gamma)},\quad k\in \mathbb{R}^n\setminus\{0\},\; \rho \ge \rho_0,
\end{equation}
with $W=(V-\tilde{V})\chi_\Omega$, where we used that
\[
\left|\int_\Omega (V-\tilde{V})zdx\right|\le \|V-\tilde{V}\|_{L^2(\Omega )}\|z\|_{L^2(\Omega )}\le c_\Omega \|V-\tilde{V}\|_{L^n(\Omega )}\|z\|_{L^2(\Omega )}.
\]
If $c=1+\|x\|_{L^\infty (\Omega )}$, then simple computations show
\begin{align*}
&\|g\|_{H^{1/2}(\Gamma )} \le c_\Omega \|u\|_{H^1(\Omega )}\le Ce^{c/h},
\\
&\|\tilde{g}\|_{H^{1/2}(\Gamma )} \le c_\Omega \|\tilde{u}\|_{H^1(\Omega )}\le Ce^{c/h}.
\end{align*}

These estimates in \eqref{u11} yield
\[
C|\hat{W}(k)|\le h(\rho)+\aleph e^{c/h(\rho )} , \quad k\in \mathbb{R}^n\setminus\{0\},\; \rho \ge \rho_0.
\]
In particular, we have
\[
C|\hat{W}(k)|\le 1/\rho+\aleph e^{c(|k|/2+\rho)} ,\quad \quad k\in \mathbb{R}^n\setminus\{0\},\; \rho \ge \rho_0,
\]
from which we deduce in a straightforward manner, changing if necessary $C$ and $c$,
\begin{equation}\label{u12}
C\int_{|k|\le \rho^{1/n}}|\hat{W}(k)|^2 dk\le 1/\rho+\aleph e^{c\rho} ,\quad  \rho \ge \rho_0.
\end{equation}

On the other hand
\begin{align}
\int_{|k|\ge \rho^{1/n}} |\hat{W}(k )|^2dk &\le \rho^{-2\sigma/n}\int_{|k|\ge h^{-\alpha}} |k^{2\sigma} |\hat{W}(k)|^2dk \label{u13}
\\
&\le \rho^{-2\sigma /n}\|W\|_{H^\sigma (\mathbb{R}^n)}^2. \nonumber
\end{align}

Now inequalities \eqref{u12} and \eqref{u13} together with Planchel-Parseval identity give
\begin{equation}\label{u14}
C\|V-\tilde{V}\|_{L^2(\Omega )} \le 1/\rho^\beta +\aleph e^{c\rho} ,\quad  \rho \ge \rho_0.
\end{equation}
with $\beta =\min \left(1/2 ,\sigma/n\right)$.

Finally, a classical minimization argument applied to \eqref{u14} gives
\[
C\|V-\tilde{V}\|_{L^2(\Omega )} \le\Psi_\beta (\aleph).
\]
The proof is then complete. 
\end{proof}

Let us notice that $\beta=1/2$ in the preceding theorem if $\sigma$ is chosen so that $\sigma \ge n/2$.

The construction of CGO solutions in this section can be extended to the anisotropic case including the magnetic Laplace-Beltrami operator. Precisely in an admissible compact manifold with boundary\footnote{If $(\mathscr{M},g)$ is a compact Riemannian manifold with boundary $\partial \mathscr{M}$, we say that $\mathscr{M}$ is admissible if $\mathscr{M}\Subset \mathbb{R}\times \mathscr{M}_0$, for some (n-1)-dimensional simple manifold $(\mathscr{M}_0,g_0)$ and if $g=c(\mathfrak{e}\oplus g_0)$, where $\mathfrak{e}$ is the Euclidean metric on $\mathbb{R}$ and $c$ is a smooth positive function on $\mathscr{M}$.

A compact Riemannian manifold $(\mathscr{M}_0,g_0)$ with boundary is simple if for any $x\in \mathscr{M}_0$ the exponential map $\exp_x$ with its maximal domain of definition is a diffeomorphism onto $\mathscr{M}_0$, and if $\partial \mathscr{M}_0$ is strictly convex (that is, the second fundamental form of $\partial \mathscr{M}_0\hookrightarrow \mathscr{M}_0$ is positive definite).}. This construction allows the authors in \cite{DKSU} to establish that, in dimension $n\ge 3$, the DN map determines uniquely both the magnetic and the electric potentials (see Theorem 1.7).

\section{Uniqueness for $L^{n/2}$ potentials}

This section is prepared from \cite{DKS} where the reader can find all the details of the results that we state here without proof.

\subsection{Constructing CGO solutions}

The following theorem is the key result that allows the construction of CGO solutions for the Schr\"odinger equations with $L^{n/2}$ potential.

\begin{theorem}\label{theoremG}
 For $|\tau| \ge 4$ outside a countable set, there is a linear operator $G_\tau :L^2(\Omega )\rightarrow H^2(\Omega )$ so that
\begin{align*}
&e^{\tau x_1}(-\Delta)e^{-\tau x_1}G_\tau v=v,\quad v\in L^2(\Omega ),
\\
&G_\tau e^{\tau x_1}(-\Delta)e^{-\tau x_1} v=v,\quad v\in C_0^\infty (\Omega ).
\end{align*}
This operator satisfies
\begin{align*}
&\|G_\tau f\|_{L^2(\Omega )}\le \frac{C}{|\tau|}\|f\|_{L^2(\Omega )},
\\
&\|G_\tau f\|_{H^1(\Omega )}\le C\|f\|_{L^2(\Omega )},
\\
&\|G_\tau f\|_{L^{\overline{n}}(\Omega )}\le C\|f\|_{L^{\underline{n}}(\Omega )},
\end{align*}
the constant $C$ is independent of $\tau$.
\end{theorem}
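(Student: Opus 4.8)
The plan is to build the operator $G_\tau$ as a Fourier multiplier associated with the conjugated Laplacian $e^{\tau x_1}(-\Delta)e^{-\tau x_1}$, following Sylvester--Uhlmann and Haberman--Tataru. Conjugation replaces $-\Delta$ by $-\Delta - 2\tau \partial_{x_1} + \tau^2$, whose full symbol (in the $\xi$-variable) is $|\xi|^2 - 2i\tau\xi_1 + \tau^2 = (\xi_1 - i\tau)^2 + |\xi'|^2$. The zero set of this symbol is the codimension-$2$ sphere $\{\xi_1 = 0,\ |\xi'|^2 = \tau^2\}$, so one cannot simply divide by the symbol. The remedy is the standard trick: fix $\tau > 0$ (say), and define $G_\tau$ on $\mathbb{R}^n$ via the multiplier $1/\big((\xi_1 + i\tau)^2 + |\xi'|^2\big)$ after a shift of contour — equivalently, using that $e^{-\tau x_1}$ times the kernel is a bounded convolution operator. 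Concretely I would set $\widehat{G_\tau v}(\xi) = \widehat v(\xi)/m_\tau(\xi)$ with $m_\tau(\xi) = |\xi|^2 + 2i\tau\xi_1 - \tau^2$ understood after translating $\xi_1 \mapsto \xi_1 + i\epsilon$ and letting $\epsilon \to 0^+$, then restrict the resulting function on $\mathbb{R}^n$ to $\Omega$ after extending $v$ by zero. The two intertwining identities then hold: the first on all of $L^2(\Omega)$ because the multiplier inverts the symbol away from a measure-zero set, and the second on $C_0^\infty(\Omega)$ because there one may integrate by parts freely.

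The three estimates are where the real work lies. The bound $\|G_\tau f\|_{L^2(\Omega)} \le (C/|\tau|)\|f\|_{L^2(\Omega)}$ follows from a direct computation: after the contour shift the multiplier is bounded by $C/(|\tau|\,\mathrm{dist}(\xi,S_\tau) )$-type quantities, and one checks $\sup_\xi |m_\tau(\xi+i\epsilon e_1)|^{-1}$ behaves like $1/|\tau|$ uniformly; this is the classical $\tau$-gain. The bound $\|G_\tau f\|_{H^1(\Omega)} \le C\|f\|_{L^2(\Omega)}$ comes from the same analysis applied to $\partial_j G_\tau$: the symbol of $\partial_j G_\tau$ is $i\xi_j/m_\tau(\xi)$, which one estimates by splitting into the region $|\xi| \le 2|\tau|$ (where $|\xi_j| \le 2|\tau|$ and the $1/|\tau|$ factor compensates) and $|\xi| \ge 2|\tau|$ (where $|m_\tau| \gtrsim |\xi|^2$ so $|\xi_j|/|m_\tau| \lesssim 1/|\xi|$ is bounded). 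The $L^{\underline n} \to L^{\overline n}$ bound is the subtlest: it is a Sobolev-type estimate for the operator and the cleanest route is to observe that, modulo the $\tau$-dependence, $G_\tau$ behaves like a fractional-integration operator of order $2$ localized near the sphere $S_\tau$; one invokes a uniform (in $\tau$) $L^{\underline n}\to L^{\overline n}$ bound of Kenig--Ruiz--Sogge / Jerison--Kenig type for resolvents of the Laplacian, which is exactly the estimate that makes the exponent pair $(\underline n, \overline n) = \big(\tfrac{2n}{n+2}, \tfrac{2n}{n-2}\big)$ natural. Since $\underline n$ and $\overline n$ are dual and $1/\underline n - 1/\overline n = 2/n$, this is the Hardy--Littlewood--Sobolev scaling for a second-order operator, and the uniform-in-$\tau$ version is precisely the content cited from \cite{DKS}.

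The main obstacle is the third estimate: proving the $L^{\underline n}(\Omega) \to L^{\overline n}(\Omega)$ bound \emph{uniformly in $\tau$}. Naively the resolvent-type multiplier concentrates on the sphere $|\xi'| = \tau$ as $\tau \to \infty$, and controlling the operator norm there requires the full strength of the uniform Sobolev inequality for constant-coefficient second-order operators with complex coefficients — this is where the "countable set" of excluded $\tau$'s and the restriction $|\tau| \ge 4$ enter, guaranteeing we stay away from the genuine spectrum and from resonances of the exterior problem. Once that harmonic-analysis input is in hand, assembling $G_\tau$ and verifying the algebraic intertwining identities is routine; accordingly I would state the uniform Sobolev estimate as the single technical lemma (quoting \cite{DKS} for its proof) and derive everything else from it together with the elementary $L^2$ and $H^1$ multiplier bounds above.
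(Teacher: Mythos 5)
The paper itself gives no proof of this theorem: the section opens by saying the results are stated without proof and refers to \cite{DKS}, so the comparison has to be with the construction there. Your route (global Fourier multiplier for the conjugated symbol $(\xi_1-i\tau)^2+|\xi'|^2$ \`a la Sylvester--Uhlmann, plus the Kenig--Ruiz--Sogge uniform Sobolev inequality for the $L^{\underline n}\to L^{\overline n}$ bound) is a genuinely different and, in the purely Euclidean setting, viable path; it is essentially how $L^{n/2}$ potentials were first handled in $\mathbb{R}^n$. The construction in \cite{DKS} is instead adapted to the cylinder $\mathbb{R}\times\Omega'$ in which $\Omega$ is embedded (this is why the paper fixes $\Omega'$ and works in coordinates $(x_1,r,\theta)$): one separates variables, expands in eigenfunctions of the transversal operator on $\Omega'$, and inverts a family of one-dimensional conjugated operators whose symbols are $(\xi_1-i\tau)^2+\lambda_j$; the $L^{\underline n}\to L^{\overline n}$ bound then comes from spectral cluster (Sogge-type) estimates rather than from the global KRS inequality. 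What the paper's route buys is exactly what your route cannot give: it survives on admissible manifolds, where no global Fourier transform exists, which is the setting of Theorem \ref{theoremURM}.

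Two points in your sketch are genuinely off. First, your explanation of the exceptional set (``resonances of the exterior problem'') is not what is happening, and in fact your own construction would require no exceptional set at all, since the symbol vanishes only on a measure-zero sphere; the countable set in the theorem is an artifact of the separated construction, namely the set of $\tau$ with $\tau^2$ in the (countable) spectrum of the transversal operator, where the one-dimensional symbols $(\xi_1-i\tau)^2+\lambda_j$ degenerate at $\xi_1=0$. That you cannot account for this hypothesis is a sign the construction you describe is not the one being used. Second, the entire weight of the theorem rests on the third estimate, and you discharge it by citation --- moreover to \cite{DKS}, which does not prove the Euclidean uniform Sobolev inequality (that is Kenig--Ruiz--Sogge), and which in any case proves the estimate for a differently built operator. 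Since the uniform $L^{\underline n}\to L^{\overline n}$ bound must be verified for the \emph{specific} inverse you construct (an a priori inequality for the operator does not by itself transfer to an arbitrary right inverse), this step needs either a real argument or at least a correct reference together with the observation that the KRS bound is proved precisely for the multiplier operator you define. The $L^2$ and $H^1$ multiplier estimates you outline are standard and correct.
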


We first construct CGO solutions for the Schr\"odinger operator without potential. In the rest of this section $\Omega '$ is a fixed open subset of $\mathbb{R}^{n-1}$ so that $\Omega \Subset \mathbb{R}\times \Omega '$.

\begin{lemma}\label{lemma-wp}
Fix $\tilde{x} \in \mathbb{R}^{n-1}\setminus{\overline{\Omega'}}$, $\lambda \in \mathbb{R}$ and let $b\in C^\infty (\mathbb{S}^{n-2})$. If $(r,\theta)$ are the polar coordinates with center $\tilde{x}$, we write $x=(x_1,r,\theta)\in \mathbb{R}^n$. For $|\tau|$ sufficiently large outside a countable set, there exists $u_0\in H^1(\Omega )$ satisfying
\begin{align*}
&-\Delta u_0=0\quad \mbox{in}\; \Omega ,
\\
&u_0=e^{-\tau x_1}\left[e^{-i\tau r}e^{i\lambda (x_1+ir)}b(\theta)+R_0\right],
\end{align*}
where $R_0$ satisfies
\[
|\tau|\|R_0\|_{L^2(\Omega )}+\|R_0\|_{H^1(\Omega )}+\|R_0\|_{L^{\overline{n}}(\Omega )}\le C,
\]
the constant $C$ is independent of $\tau$.
\end{lemma}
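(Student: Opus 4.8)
\textbf{Proof plan for Lemma \ref{lemma-wp}.}

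The plan is to separate the sought solution into an explicit ``leading'' complex geometric optics amplitude and a remainder that will be produced by the operator $G_\tau$ from Theorem \ref{theoremG}. First I would verify that the explicit function
\[
a(x)=e^{-i\tau r}e^{i\lambda(x_1+ir)}b(\theta)
\]
is (up to an error controlled as $\tau\to\infty$) an approximate solution of the conjugated Laplace equation $e^{\tau x_1}(-\Delta)e^{-\tau x_1}a=0$. Recall $e^{\tau x_1}(-\Delta)e^{-\tau x_1}=-\Delta+2\tau\partial_{x_1}-\tau^2$; writing $\Delta$ in the coordinates $(x_1,r,\theta)$ (so $\Delta=\partial_{x_1}^2+\partial_r^2+\frac{n-2}{r}\partial_r+\frac{1}{r^2}\Delta_{\mathbb{S}^{n-2}}$), the phase $e^{-i\tau r}$ is designed so that the terms of order $\tau^2$ cancel: $(-\partial_r^2+2\tau\partial_{x_1}-\tau^2)$ applied to $e^{-i\tau r}\cdot(\text{stuff})$, combined with the holomorphicity of $(x_1,r)\mapsto e^{i\lambda(x_1+ir)}$ in the variable $x_1+ir$, kills the $O(\tau^2)$ and $O(\tau)$ contributions, leaving
\[
e^{\tau x_1}(-\Delta)e^{-\tau x_1}a = e^{-i\tau r}\,\rho(x),
\]
where $\rho$ collects the lower-order terms (the $\frac{n-2}{r}\partial_r$ and $\frac{1}{r^2}\Delta_{\mathbb{S}^{n-2}}$ terms and $\partial_r$, $\partial_{x_1}$ derivatives of the smooth amplitude), and crucially $\rho$ is bounded in $L^2(\Omega)$ (indeed in $L^{\underline n}(\Omega)$) uniformly in $\tau$, because $\tilde x\notin\overline{\Omega'}$ forces $r$ to be bounded below on $\Omega$, so $1/r$ and $1/r^2$ are smooth and bounded there, and $b\in C^\infty(\mathbb{S}^{n-2})$.

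Next, I would set $R_0=-G_\tau\!\big(e^{\tau x_1}(-\Delta)e^{-\tau x_1}a\big)=-G_\tau(e^{-i\tau r}\rho)$ for $|\tau|$ large outside the countable exceptional set of Theorem \ref{theoremG}. Then by the first identity of that theorem,
\[
e^{\tau x_1}(-\Delta)e^{-\tau x_1}(a+R_0)=e^{-i\tau r}\rho-e^{-i\tau r}\rho=0,
\]
so $u_0:=e^{-\tau x_1}(a+R_0)$ solves $-\Delta u_0=0$ in $\Omega$, and it has exactly the stated form with remainder $R_0$. It remains to check $u_0\in H^1(\Omega)$ and the three estimates on $R_0$: these follow directly from the mapping properties of $G_\tau$ in Theorem \ref{theoremG} applied to the right-hand side $e^{-i\tau r}\rho$. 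For the $L^2$ bound, $\|R_0\|_{L^2(\Omega)}\le \frac{C}{|\tau|}\|e^{-i\tau r}\rho\|_{L^2(\Omega)}=\frac{C}{|\tau|}\|\rho\|_{L^2(\Omega)}\le \frac{C'}{|\tau|}$, so in particular $|\tau|\|R_0\|_{L^2(\Omega)}\le C'$; for the $H^1$ bound, $\|R_0\|_{H^1(\Omega)}\le C\|\rho\|_{L^2(\Omega)}\le C'$; and for the $L^{\overline n}$ bound, $\|R_0\|_{L^{\overline n}(\Omega)}\le C\|e^{-i\tau r}\rho\|_{L^{\underline n}(\Omega)}=C\|\rho\|_{L^{\underline n}(\Omega)}\le C'$ (using the embedding $L^2(\Omega)\hookrightarrow L^{\underline n}(\Omega)$ on the bounded set $\Omega$, since $\underline n<2$). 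The modulus $|e^{-i\tau r}|=1$ is what makes all these uniform in $\tau$.

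The main obstacle is the first step: carrying out the conjugated-Laplacian computation carefully enough to see that the phase $e^{-i\tau r}$ together with the holomorphic factor $e^{i\lambda(x_1+ir)}$ really does annihilate all terms that grow with $\tau$, so that the residual $\rho$ is $\tau$-uniformly bounded. Concretely one must check that $(\partial_{x_1}^2+\partial_r^2)(x_1+ir)=0$ and $(\partial_{x_1}^2+\partial_r^2)$ applied to any holomorphic function of $x_1+ir$ vanishes, that $2\tau\partial_{x_1}$ acting on $e^{-i\tau r}e^{i\lambda(x_1+ir)}b(\theta)$ contributes $2\tau\cdot i\lambda\cdot(\cdot)$ which is only $O(\tau)$ — wait, this is $O(\tau)$, not $O(1)$ — so one must be more careful: the cancellation of the $O(\tau)$ terms comes from pairing $-\partial_r^2$ (which produces $+\tau^2$ from the double $r$-derivative of $e^{-i\tau r}$, cancelling $-\tau^2$, but also a cross term $+2i\tau\partial_r$) against $+2\tau\partial_{x_1}$, and using that $\partial_r$ and $i\partial_{x_1}$ act the same way on the holomorphic factor $e^{i\lambda(x_1+ir)}$ while $\partial_r e^{-i\tau r}=-i\tau e^{-i\tau r}$; a clean way to organize this is to write $a=e^{-\tau x_1}$-conjugation in the complex variable $z=x_1+ir$ and note that $-\Delta = -4\partial_z\partial_{\bar z}-(\text{angular terms})$ in the $(x_1,r)$-plane, so that the whole $(x_1,r)$-part of $e^{\tau x_1}(-\Delta)e^{-\tau x_1}$ acting on $e^{-i\tau r}e^{i\lambda z}b(\theta)$ reduces after the substitution to a first-order operator in $\partial_{\bar z}$ hitting $e^{i\lambda z}$ (which is zero) plus bounded lower-order pieces. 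Once this algebraic bookkeeping is done the rest is a routine application of Theorem \ref{theoremG}.
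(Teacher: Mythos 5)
Your plan follows the paper's proof exactly: set $f=-e^{\tau x_1}(-\Delta)e^{-\tau x_1}a$ for the explicit amplitude $a$, check that $f$ is bounded in $L^2(\Omega)\cap L^{\underline{n}}(\Omega)$ uniformly in $\tau$, put $R_0=G_\tau f$, and read off the three estimates from the mapping properties of $G_\tau$ in Theorem \ref{theoremG}. The second half of your argument (the application of $G_\tau$, the embedding $L^2(\Omega)\hookrightarrow L^{\underline{n}}(\Omega)$ on the bounded set $\Omega$, the membership $u_0\in H^1(\Omega)$, the exceptional countable set inherited from $G_\tau$) is correct and is exactly what the paper does.

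The gap is in the step you yourself single out as the main obstacle, and your moment of doubt there is justified. Writing $m$ for the amplitude, one computes
\[
e^{\tau x_1}(-\Delta)e^{-\tau x_1}\left(e^{-i\tau r}m\right)=e^{-i\tau r}\left[-\Delta m+2\tau\Bigl((\partial_{x_1}+i\partial_r)m+\frac{i(n-2)}{2r}m\Bigr)\right].
\]
The $\tau^2$ terms cancel, and for $m=e^{i\lambda(x_1+ir)}b(\theta)$ holomorphy gives $(\partial_{x_1}+i\partial_r)m=0$; but the contribution $\frac{i(n-2)}{r}\,\tau\, e^{-i\tau r}m$, produced by the first-order radial part $\frac{n-2}{r}\partial_r$ of the $(n-1)$-dimensional Laplacian hitting the oscillating phase $e^{-i\tau r}$, does \emph{not} cancel when $n\ge 3$. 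You list $\frac{n-2}{r}\partial_r$ among the ``bounded lower-order pieces,'' but it acts on $e^{-i\tau r}$ and is of order $O(\tau)$; hence your residual $\rho$ is not uniformly bounded, and $R_0=-G_\tau(e^{-i\tau r}\rho)$ would only satisfy $\|R_0\|_{L^2(\Omega)}=O(1)$ and $\|R_0\|_{L^{\overline{n}}(\Omega)}=O(|\tau|)$, which is not what the lemma claims and not enough for the uniqueness argument that uses it. The standard remedy (from Dos Santos Ferreira--Kenig--Salo) is to insert the Jacobian half-density $r^{-(n-2)/2}=|x'-\tilde x|^{(2-n)/2}$ into the amplitude: with $m=r^{-(n-2)/2}e^{i\lambda(x_1+ir)}b(\theta)$ the transport equation $(\partial_{x_1}+i\partial_r)m+\frac{i(n-2)}{2r}m=0$ holds, the entire $O(\tau)$ bracket vanishes, and $f=e^{-i\tau r}\Delta m$ is bounded independently of $\tau$ in $L^2\cap L^{\underline{n}}$ because $r$ is bounded away from $0$ on $\Omega$ (this is where $\tilde x\notin\overline{\Omega'}$ enters). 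To be fair, the paper's statement omits this factor and its proof asserts $f=\Delta\bigl(e^{i\lambda(x_1+ir)}b(\theta)\bigr)$ ``by straightforward computations,'' which is subject to the same correction; the factor $|x'-\tilde x|^{2-n}$ appearing later in identity \eqref{tu6} confirms that each amplitude is meant to carry $|x'-\tilde x|^{(2-n)/2}$.
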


\begin{proof}
If $f=-e^{\tau x_1}(-\Delta)e^{-\tau x_1}\left[ e^{-i\tau r}e^{i\lambda (x_1+ir)}b(\theta)\right]$, then we are  reduced to solve the  equation
\[
e^{\tau x_1}(-\Delta)e^{-\tau x_1}R_0=f\quad \mbox{in}\; \Omega .
\]
We have by straightforward computations 
\[
f=\Delta (e^{i\lambda (x_1+ir)}b(\theta)).
\]
Whence
\begin{equation}\label{wp1}
\|f\|_{L^2(\Omega )}+\|f\|_{L^{\underline{n}}(\Omega )}\le C,
\end{equation}
the constant $C$ is independent of $\tau$.

Therefore, according to Theorem \ref{theoremG}, $R_0=G_\tau f$ is a solution of this equation satisfying, in light of \eqref{wp1}, the required  properties. 
\end{proof}

We define the truncation operator $T_k$, $k\ge 1$ is an integer,  on $L^p(\Omega )$, $p\ge 1$, as follows
\[
T_k\varphi (x) = 
\left\{ 
\begin{array}{ll} k&\mbox{if}\; |\varphi (x)|>k , \\ \varphi (x)\quad &\mbox{if}\; |\varphi (x)|\le k, \end{array}
\right.
\quad \varphi \in L^p(\Omega ).
\]

\begin{lemma}\label{truncation-lemma}
Let $\varphi \in L^p(\Omega )$. Then $T_k\varphi\in L^\infty (\Omega )$, $(T_k\varphi)$ converges to $\varphi$ in $L^p(\Omega )$, when $k\rightarrow \infty$,  and $\|T_k\varphi\|_{L^p(\Omega )}\le \|\varphi\|_{L^p(\Omega )}$.
\end{lemma}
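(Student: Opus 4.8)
The statement to prove is Lemma~\ref{truncation-lemma}, concerning the truncation operator $T_k$. The plan is to verify the three assertions in turn: the $L^\infty$ bound, the $L^p$-norm bound, and the convergence in $L^p(\Omega)$. The first two are immediate from the pointwise definition, while the third is the only part requiring a genuine limiting argument; I expect the dominated convergence theorem to do all the work there.

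First I would observe that $|T_k\varphi(x)| \le k$ for every $x$, directly from the two-case definition: on $\{|\varphi|>k\}$ the value is $k$, and on $\{|\varphi|\le k\}$ the value is $\varphi(x)$ with $|\varphi(x)|\le k$. Hence $T_k\varphi \in L^\infty(\Omega)$ (and in particular $T_k\varphi\in L^p(\Omega)$ since $\Omega$ is bounded). Next, I would note the pointwise inequality $|T_k\varphi(x)| \le |\varphi(x)|$: on $\{|\varphi|>k\}$ we have $|T_k\varphi(x)| = k < |\varphi(x)|$, and on the complement $T_k\varphi(x)=\varphi(x)$. Integrating the $p$-th powers gives $\|T_k\varphi\|_{L^p(\Omega)} \le \|\varphi\|_{L^p(\Omega)}$.

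For the convergence, I would fix $\varphi\in L^p(\Omega)$ and consider the sequence of functions $g_k := |T_k\varphi - \varphi|^p$. For each $x$ with $|\varphi(x)|<\infty$ (which is a.e.\ $x$, since $\varphi\in L^p$), once $k > |\varphi(x)|$ we have $T_k\varphi(x)=\varphi(x)$, so $g_k(x)\to 0$ pointwise a.e. Moreover, using the pointwise bounds above together with the elementary inequality $|a-b|^p \le 2^{p-1}(|a|^p+|b|^p)$, we get $g_k(x) = |T_k\varphi(x)-\varphi(x)|^p \le 2^{p-1}(|T_k\varphi(x)|^p + |\varphi(x)|^p) \le 2^p|\varphi(x)|^p$, which is an integrable dominating function independent of $k$. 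The dominated convergence theorem then yields $\int_\Omega g_k\, dx \to 0$, i.e.\ $\|T_k\varphi - \varphi\|_{L^p(\Omega)}\to 0$ as $k\to\infty$.

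There is no real obstacle here; the only point worth care is making sure the dominating function is genuinely independent of $k$ and integrable, which the bound $g_k \le 2^p|\varphi|^p \in L^1(\Omega)$ handles. One could equally phrase the convergence via the trivial estimate $\int_\Omega |T_k\varphi-\varphi|^p\,dx = \int_{\{|\varphi|>k\}} |k\,\mathrm{sgn}\,\varphi - \varphi|^p\,dx \le \int_{\{|\varphi|>k\}} |\varphi|^p\,dx$ (since on that set $|k\,\mathrm{sgn}\,\varphi - \varphi| \le |\varphi|$ when $\varphi$ is real; for complex-valued $\varphi$ one keeps the $2^p|\varphi|^p$ bound) and then invoke that the tail integral of an $L^p$ function over the shrinking sets $\{|\varphi|>k\}$ tends to $0$, again by dominated convergence.
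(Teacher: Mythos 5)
Your argument is correct and follows essentially the same route as the paper: the pointwise bound $|T_k\varphi|\le|\varphi|$ gives the norm inequality, and a.e.\ convergence plus dominated convergence gives the $L^p$ convergence (the paper dominates by $|\varphi|^p$ directly via $|T_k\varphi-\varphi|\le 2|\varphi|$, which is the same idea as your $2^p|\varphi|^p$ bound). The only caveat is your parenthetical rewriting with $k\,\mathrm{sgn}\,\varphi$: the paper's definition sets $T_k\varphi$ equal to the constant $k$ on $\{|\varphi|>k\}$, not $k\,\mathrm{sgn}\,\varphi$, but this does not affect your main argument.
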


\begin{proof}
It is obvious that $|T_k\varphi| \le |\varphi |$ a.e. and $T_k\varphi$ converges  a.e. to $\varphi$. Whence the convergence in $L^p(\Omega )$ holds by virtue of dominated convergence theorem.

The inequality $\|T_k\varphi\|_{L^p(\Omega )}\le \|\varphi\|_{L^p(\Omega )}$ follows readily from  $|T_k\varphi| \le |\varphi |$ a.e..
\end{proof}

\begin{lemma}\label{lemma-multi}
Let $\phi, \psi \in L^n(\Omega )$. Then we have, for large $|\tau|$ outside a countable set,
\begin{equation}\label{multi1}
\|\phi G_\tau \psi \|_{\mathscr{B}(L^2(\Omega ))}\le C\|\phi \|_{L^n(\Omega )}\|\psi\|_{L^n(\Omega )},
\end{equation}
the constant $C$ is independent of $\tau$. Furthermore
\begin{equation}\label{multi2}
\lim_{|\tau| \rightarrow \infty}\| \phi G_\tau \psi\|_{\mathscr{B}(L^2(\Omega ))}=0.
\end{equation}
\end{lemma}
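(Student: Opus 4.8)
The idea is to estimate $\|\phi G_\tau \psi f\|_{L^2(\Omega)}$ for $f\in L^2(\Omega)$ by chaining together H\"older's inequality with the $L^{\underline n}\to L^{\overline n}$ bound for $G_\tau$ supplied by Theorem \ref{theoremG}. First I would record the elementary Sobolev-type exponent arithmetic: since $\overline n$ is the conjugate exponent of $\underline n/(\underline n -1)$ and, more to the point, $\tfrac1{\underline n}=\tfrac1n+\tfrac12$ while $\tfrac12=\tfrac1n+\tfrac1{\overline n}$, one has that multiplication by an $L^n$ function maps $L^2(\Omega)$ into $L^{\underline n}(\Omega)$ and maps $L^{\overline n}(\Omega)$ into $L^2(\Omega)$, with the operator norms controlled by the $L^n$ norm of the multiplier. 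Concretely, for $f\in L^2(\Omega)$,
\[
\|\psi f\|_{L^{\underline n}(\Omega)}\le \|\psi\|_{L^n(\Omega)}\|f\|_{L^2(\Omega)},
\]
then by Theorem \ref{theoremG},
\[
\|G_\tau(\psi f)\|_{L^{\overline n}(\Omega)}\le C\|\psi f\|_{L^{\underline n}(\Omega)}\le C\|\psi\|_{L^n(\Omega)}\|f\|_{L^2(\Omega)},
\]
and finally, again by H\"older,
\[
\|\phi G_\tau(\psi f)\|_{L^2(\Omega)}\le \|\phi\|_{L^n(\Omega)}\|G_\tau(\psi f)\|_{L^{\overline n}(\Omega)}\le C\|\phi\|_{L^n(\Omega)}\|\psi\|_{L^n(\Omega)}\|f\|_{L^2(\Omega)}.
\]
Taking the supremum over $\|f\|_{L^2(\Omega)}\le 1$ gives \eqref{multi1}, with $C$ the same constant as in Theorem \ref{theoremG}, hence independent of $\tau$.

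For the decay statement \eqref{multi2} the constant bound above is not enough, so the plan is to exploit the extra $|\tau|^{-1}$ gain in the $L^2\to L^2$ estimate for $G_\tau$ together with an interpolation/density argument. First approximate: by Lemma \ref{truncation-lemma} choose $\phi_k=T_k\phi$, $\psi_k=T_k\psi$ in $L^\infty(\Omega)$ with $\phi_k\to\phi$, $\psi_k\to\psi$ in $L^n(\Omega)$. Writing
\[
\phi G_\tau\psi-\phi_kG_\tau\psi_k=(\phi-\phi_k)G_\tau\psi+\phi_kG_\tau(\psi-\psi_k),
\]
the two error terms are bounded in $\mathscr{B}(L^2(\Omega))$, using \eqref{multi1}, by $C\|\phi-\phi_k\|_{L^n}\|\psi\|_{L^n}$ and $C\|\phi_k\|_{L^n}\|\psi-\psi_k\|_{L^n}\le C\|\phi\|_{L^n}\|\psi-\psi_k\|_{L^n}$ respectively (using $\|\phi_k\|_{L^n}\le\|\phi\|_{L^n}$), both of which tend to $0$ as $k\to\infty$, uniformly in $\tau$. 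Hence it suffices to prove \eqref{multi2} for bounded $\phi_k,\psi_k$. But for bounded multipliers, $\|\phi_k G_\tau(\psi_k f)\|_{L^2}\le \|\phi_k\|_{L^\infty}\|G_\tau\|_{\mathscr{B}(L^2)}\|\psi_k\|_{L^\infty}\|f\|_{L^2}\le \frac{C}{|\tau|}\|\phi_k\|_{L^\infty}\|\psi_k\|_{L^\infty}\|f\|_{L^2}$ by the first estimate of Theorem \ref{theoremG}, so $\|\phi_kG_\tau\psi_k\|_{\mathscr{B}(L^2(\Omega))}\to 0$ as $|\tau|\to\infty$. An $\varepsilon/2$ argument (first fix $k$ so the error terms are $<\varepsilon/2$ for all large $|\tau|$, then let $|\tau|\to\infty$ to make the main term $<\varepsilon/2$) then yields \eqref{multi2}.

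\textbf{Main obstacle.} The only genuinely delicate point is the exponent bookkeeping needed to make the chain $L^2\to L^{\underline n}\to L^{\overline n}\to L^2$ work — i.e.\ checking that $\underline n$ and $\overline n$ as defined in the Preliminaries are exactly the exponents for which H\"older against an $L^n$ weight lands one in the domain, resp.\ out of the range, of the $G_\tau$ mapping property; everything else is a routine triangle-inequality/density argument. (One should also keep track of the caveat ``for large $|\tau|$ outside a countable set'', which is simply inherited from the domain of definition of $G_\tau$ in Theorem \ref{theoremG} and requires no extra work.)
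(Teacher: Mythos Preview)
Your proof is correct and follows essentially the same approach as the paper: the H\"older chain $L^2\to L^{\underline n}\to L^{\overline n}\to L^2$ for \eqref{multi1}, and the truncation-plus-$|\tau|^{-1}$-gain argument for \eqref{multi2}. The only cosmetic difference is that the paper splits $\phi G_\tau\psi$ into three pieces $\phi_0G_\tau\psi_0+\phi_0G_\tau\psi_1+\phi_1G_\tau\psi$ directly, whereas you phrase it as an approximation $\phi_kG_\tau\psi_k$ plus two error terms; both are the same $\varepsilon$-argument.
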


\begin{proof}
In light of properties of $G_\tau$ in Theorem \ref{theoremG}, we get by applying H\"older's inequality, where $f\in L^2(\Omega )$,
\begin{align*}
\|\phi G_\tau \psi f\|_{L^2(\Omega )}&\le C\|\phi\|_{L^n(\Omega)}\|G_\tau \psi f\|_{L^{\overline{n}}(\Omega )}
\\
&\le C\|\phi\|_{L^n(\Omega)}\|\psi f\|_{L^{\underline{n}}(\Omega )}
\\
&\le C\|\phi\|_{L^n(\Omega)}\|\psi\|_{L^n(\Omega)}\|f\|_{L^2(\Omega )},
\end{align*}
the constant $C$ is independent of $\tau$. That is we proved \eqref{multi1}.

Let $\epsilon >0$. Then according to Lemma \ref{truncation-lemma}, we can choose $k$ sufficiently large in such a way that $\phi_0=T_k\phi$ and $\phi_1=\phi -T_k\phi$ are so that $\phi_0\in L^\infty (\Omega )$, $\| \phi_0\|_{L^n (\Omega )}\le \|\phi \|_{L^n(\Omega )}$ and $\|\phi_1\|_{L^n (\Omega)}\le \epsilon /3$.

Similarly we have $\psi=\psi_0+\psi_1$ with $\psi_0\in L^\infty (\Omega )$, $\| \psi_0\|_{L^n (\Omega )}\le \|\psi \|_{L^n(\Omega )}$ and $\|\psi_1\|_{L^n (\Omega)}\le \epsilon /3$.

Using \eqref{multi1}, we find, for some constant $\tilde{C}$ independent of $\tau$,
\begin{align*}
\|\phi G_\tau \psi f\|_{L^2(\Omega )}&\le \|\phi_0 G_\tau \psi_0 f\|_{L^2(\Omega )}+\|\phi_0 G_\tau \psi_1 \|_{L^2(\Omega )} +\|\phi_1 G_\tau \psi \|_{L^2(\Omega )}
\\
&\le \left(\frac{\tilde{C}}{|\tau|}\|\phi_0\|_{L^\infty(\Omega )}\|\phi_1\|_{L^\infty(\Omega )}+\frac{\epsilon}{3}+\frac{\epsilon}{3}\right)\|f\|_{L^2(\Omega )}.
\end{align*}
Therefore $\|\phi G_\tau \psi f\|_{L^2(\Omega )}\le \epsilon\|f\|_{L^2(\Omega )}$, for sufficiently large $|\tau|$. This proves \eqref{multi2}. 
\end{proof}

We are now ready to construct CGO solutions of the Schr\"odinger operator with  $L^{n/2}$ potential.

\begin{theorem}\label{theorem-cgo2}
Let $V\in L^{n/2}(\Omega )$. Fix $\tilde{x} \in \mathbb{R}^{n-1}\setminus{\overline{\Omega'}}$, $\lambda \in \mathbb{R}$ and let $b\in C^\infty (\mathbb{S}^{n-2})$. If $(r,\theta)$ are the polar coordinates with center $\tilde{x}$, we write $x=(x_1,r,\theta)\in \mathbb{R}^n$. For $|\tau|$ sufficiently large outside a countable set, there exists $u\in H^1(\Omega)$ satisfying
\begin{align*}
&(-\Delta +V)u=0\quad \mbox{in}\; \Omega ,
\\
&u=e^{-\tau x_1}\left[e^{-i\tau r}e^{i\lambda (x_1+ir)}b(\theta)+R\right],
\end{align*}
where $R$ satisfies
\[
\|R\|_{L^{\overline{n}}(\Omega )}\le C \quad \mbox{and}\quad \lim_{|\tau|\rightarrow \infty}\|R\|_{L^2(\Omega )}=0,
\]
the constant $C$ is independent of $\tau$.
\end{theorem}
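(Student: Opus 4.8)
The plan is to seek $u$ in the form $u = e^{-\tau x_1}[e^{-i\tau r}e^{i\lambda(x_1+ir)}b(\theta) + R]$ and to derive the equation that $R$ must satisfy. Writing $w_0 = e^{-i\tau r}e^{i\lambda(x_1+ir)}b(\theta)$ for the leading amplitude, plugging into $(-\Delta+V)u=0$ and conjugating by $e^{\tau x_1}$ shows that $R$ must solve
\[
e^{\tau x_1}(-\Delta)e^{-\tau x_1}R + VR = -Vw_0 - e^{\tau x_1}(-\Delta)e^{-\tau x_1}w_0 .
\]
From Lemma \ref{lemma-wp} (more precisely its proof), the second term on the right equals $\Delta(e^{i\lambda(x_1+ir)}b(\theta))$, which is bounded in $L^2(\Omega)\cap L^{\underline n}(\Omega)$ uniformly in $\tau$; combined with the fact that $V\in L^{n/2}(\Omega)$ and $w_0\in L^\infty$ off a neighbourhood of $\tilde x$, the whole right-hand side $g_\tau := -Vw_0 - \Delta(e^{i\lambda(x_1+ir)}b(\theta))$ is bounded in $L^{\underline n}(\Omega)$ uniformly in $\tau$ (here one uses that $L^{n/2}\cdot L^\infty \subset L^{n/2}\hookrightarrow L^{\underline n}$ on a bounded domain).

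Next I would set up a fixed point / Neumann series argument using the operator $G_\tau$ from Theorem \ref{theoremG}. Applying $G_\tau$ to the equation for $R$ transforms it into $(I + G_\tau V)R = G_\tau g_\tau$, i.e. $(I + G_\tau V)R = R_0$ where $R_0 = G_\tau g_\tau$ is exactly the remainder produced in Lemma \ref{lemma-wp} (so $\|R_0\|_{L^{\overline n}(\Omega)}\le C$ and $|\tau|\|R_0\|_{L^2(\Omega)} + \|R_0\|_{H^1(\Omega)}\le C$ uniformly). The key point is that $I + G_\tau V$ is invertible on $L^2(\Omega)$ for $|\tau|$ large: factoring $V = \phi\psi$ with $\phi = |V|^{1/2}\,\mathrm{sgn}(V)$ and $\psi = |V|^{1/2}$, both in $L^n(\Omega)$ with norm controlled by $\|V\|_{L^{n/2}(\Omega)}^{1/2}$, Lemma \ref{lemma-multi} gives $\|\phi G_\tau \psi\|_{\mathscr B(L^2)}\to 0$ as $|\tau|\to\infty$. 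Hence $\|G_\tau V\|$ — viewed appropriately, or rather the relevant operator $\psi G_\tau \phi$ whose invertibility of $I+\cdot$ is equivalent — has norm $<1/2$ for $|\tau|$ large, so $I + G_\tau V$ is invertible with $\|(I+G_\tau V)^{-1}\|_{\mathscr B(L^2)}\le 2$, and we set $R = (I+G_\tau V)^{-1}R_0$.

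It then remains to record the bounds on $R$. For the $L^{\overline n}$ bound: from $R = R_0 - G_\tau(VR)$, using $\|G_\tau\|_{L^{\underline n}\to L^{\overline n}}\le C$ and $\|VR\|_{L^{\underline n}(\Omega)}\le \|V\|_{L^{n/2}(\Omega)}\|R\|_{L^{\overline n}(\Omega)}$ (Hölder, since $1/\underline n = 2/n + 1/\overline n$), one would want to absorb the term — but here one should be slightly careful and instead argue: $\|R\|_{L^{\overline n}}\le \|R_0\|_{L^{\overline n}} + C\|V\|_{L^{n/2}}\|R\|_{L^{\overline n}}$ only closes if $\|V\|_{L^{n/2}}$ is small, so more robustly I would split $V = V_0 + V_1$ with $V_0\in L^\infty$ and $\|V_1\|_{L^{n/2}}$ small (as in Lemma \ref{lemma-multi}), treat $G_\tau V_0$ using the decay $\|G_\tau\|_{L^2\to L^2}\le C/|\tau|$ and $\|G_\tau\|_{L^2\to H^1}\le C$ together with $H^1\hookrightarrow L^{\overline n}$, and absorb only the small part; this yields $\|R\|_{L^{\overline n}(\Omega)}\le C$ uniformly in $\tau$. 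For the decay $\|R\|_{L^2(\Omega)}\to 0$: write $R - R_0 = -G_\tau(VR)$, so $\|R-R_0\|_{L^2}\le \|G_\tau V\|_{\mathscr B(L^2)}\|R\|_{L^2}$; since $\|G_\tau V\|_{\mathscr B(L^2)}\to 0$ and $\|R\|_{L^2}$ stays bounded (by the uniform $L^{\overline n}$ bound and the embedding, or directly), $\|R-R_0\|_{L^2}\to 0$, and $\|R_0\|_{L^2}\le C/|\tau|\to 0$, giving $\|R\|_{L^2(\Omega)}\to 0$. The main obstacle is the careful bookkeeping in the invertibility of $I+G_\tau V$ on $L^2$ and in propagating the $L^{\overline n}$ estimate without a smallness hypothesis on $V$; the truncation splitting of $V$ from Lemma \ref{lemma-multi} is the device that handles both, so the argument reduces to assembling these pieces.
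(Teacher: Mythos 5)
Your overall strategy is the paper's: build the harmonic CGO solution via Lemma \ref{lemma-wp}, then solve for the correction by a Neumann series built from $G_\tau$, using the square-root factorization of $V$ and Lemma \ref{lemma-multi} for smallness, and the truncation splitting for the $L^2$ decay. But there are two genuine gaps. The first and most serious: nothing in your construction puts $u$ in $H^1(\Omega)$. Your correction term is $G_\tau$ applied to a function that is only in $L^{\underline{n}}(\Omega)$ (namely $\phi v$ with $\phi\in L^n$, $v\in L^2$, and also $Vw_0\in L^{n/2}\subset L^{\underline n}$), and the only mapping property of $G_\tau$ available on $L^{\underline n}$ is into $L^{\overline n}$, not into $H^1$; the bounds $|\tau|\|R_0\|_{L^2}+\|R_0\|_{H^1}\le C$ you attribute to $R_0=G_\tau g_\tau$ do not follow, since $g_\tau$ contains $-Vw_0\notin L^2$ (the remainder of Lemma \ref{lemma-wp} is $G_\tau$ of the $L^2\cap L^{\underline n}$ function $\Delta(e^{i\lambda(x_1+ir)}b)$ only). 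So at the end you have $u\in L^{\overline n}(\Omega)$ solving the equation distributionally, but not $u\in H^1(\Omega)$, which is what the theorem asserts and what is needed for $u\in\mathscr S_V$ in the uniqueness argument. The paper closes this by running the whole construction on a larger domain $\Omega_0\Supset\Omega$, observing $Vu_e\in L^{\underline n}(\Omega_0)\subset H^{-1}(\Omega_0)$, cutting off and solving $-\Delta w=f$ with $f\in H^{-1}(\Omega_0)$ to conclude $\varphi u_e\in H_0^1(\Omega_0)$, and restricting to $\Omega$. This step is absent from your proposal.

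The second gap is that the operator $I+G_\tau V$ on $L^2(\Omega)$ is not actually well defined: for $R\in L^2$ and $V\in L^{n/2}$ one only gets $VR\in L^{2n/(n+4)}$, which for $n=3$ is not even a Lebesgue exponent $\ge 1$, and in no dimension is it a space on which $G_\tau$ is given. You correctly flag that one should pass to $\psi G_\tau\phi$ on $L^2$ (equivalently, set $R-R_0=G_\tau(\psi v)$ with $v=-(I+\phi G_\tau\psi)^{-1}(\phi\,e^{\tau x_1}u_0)\in L^2$, exactly as in the paper), but your final decay estimate $\|R-R_0\|_{L^2}\le\|G_\tau V\|_{\mathscr B(L^2)}\|R\|_{L^2}$ reverts to the ill-defined operator. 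The correct route is to write $R-R_0=-G_\tau(\psi v)$ with $\|v\|_{L^2}\le C$, split $\psi=\psi_0+\psi_1$ with $\psi_0\in L^\infty$ and $\|\psi_1\|_{L^n}\le\epsilon$, and estimate $\|G_\tau(\psi_0 v)\|_{L^2}\le C|\tau|^{-1}\|\psi_0\|_{L^\infty}\|v\|_{L^2}$ and $\|G_\tau(\psi_1 v)\|_{L^2}\le c\|G_\tau(\psi_1 v)\|_{L^{\overline n}}\le C\epsilon\|v\|_{L^2}$. With these repairs (and the same treatment applied to $G_\tau(Vw_0)$), your argument matches the paper's, but as written both the $H^1$ membership and the $L^2$ decay are not established.
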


\begin{proof}
We seek $u$ of the form $u=u_0+e^{-\tau x_1}R_1$, where $u_0$ is constructed in Lemma \ref{lemma-wp}. Therefore $R_1$ must be a solution of the equation
\begin{equation}\label{p1}
e^{\tau x_1}(-\Delta +V)e^{-\tau x_1}R_1=-Ve^{\tau x_1}u_0.
\end{equation}

We write 
\[
V(x)=|V(x)|e^{i\vartheta (x)}=|V(x)|^{1/2}W(x)\quad \mbox{with}\quad W(x)=|V(x)|^{1/2}e^{i\vartheta (x)}.
\]
Then we try to find $R_1$ of the form
\[
R_1=G_\tau |V|^{\frac{1}{2}}v.
\]
That is, in light of \eqref{p1}, $v$ should satisfy
\[
\left(1+WG_\tau |V|^{1/2}\right)v=-We^{\tau x_1}u_0.
\]

From Lemma \ref{lemma-multi}, for sufficiently large $|\tau|$, we have $\|WG_\tau |V|^{1/2}\|_{\mathscr{B}(L^2(\Omega ))}\le 1/2$, in which case
\[
v=-\left(1+WG_\tau |V|^{1/2}\right)^{-1}(We^{\tau x_1}u_0).
\]
In the rest of this proof $C$ is a generic constant independent of $\tau$.

We have obviously $\|v\|_{L^2(\Omega )}\le C\|e^{\tau x_1}u_0\|_{L^2(\Omega )}$. This and the estimate in Lemma \ref{lemma-wp} yield $\|v\|_{L^2(\Omega )}\le C$. As
\[
\|R_1\|_{L^{\overline{n}}(\Omega )}\le C\left\||V|^{1/2}v\right\|_{L^{\underline{n}}(\Omega )}\le C\|V\|_{L^{n/2}(\Omega )}\|v\|_{L^2(\Omega)},
\]
we then get, with $R=R_0+R_1$,
\[
\|R\|_{L^{\overline{n}}(\Omega )}\le C.
\]
We already know that $\|R_0\|_{L^2(\Omega )}\le C/|\tau|$. Whence it is enough to prove that $\|R_1\|_{L^2(\Omega )}\rightarrow 0$ when $|\tau| \rightarrow \infty$. 

To this end, as in the preceding proof, for $\epsilon >0$, we decompose $|V|^{1/2}$ in the form
$|V|^{1/2}=\phi +\psi$, with $\phi \in L^\infty (\Omega )$, $\|\phi \|_{L^n(\Omega )}\le \|V\|_{L^{n/2}(\Omega)}^{1/2}$ and $\|\psi\|_{L^n (\Omega )}\le \epsilon$.

In that case, we have
\begin{align*}
\|R_1\|_{L^2(\Omega )}&\le\|G_\tau \phi v\|_{L^2(\Omega )}+C\|\psi v\|_{L^{\underline{n}}(\Omega )}
\\
&\le C\left(\frac{1}{|\tau|}\|\phi\|_{L^\infty(\Omega )}+\|\psi\|_{L^n(\Omega )}\right)
\\
&\le C\left(\frac{1}{|\tau|}\|\phi\|_{L^\infty(\Omega )}+\epsilon\right).
\end{align*}
Hence $\|R_1\|_{L^2(\Omega )}\le C\epsilon$, for $|\tau|$ is sufficiently large.

Let $u_e$ be the solution obtained by the above construction with $\Omega$ substituted by $\Omega_0\Supset \Omega$. Observing that $V$, extended by $0$ outside $\Omega$, belongs to $L^{n/2}(\Omega _0)$ and $u_e\in L^{\overline{n}}(\Omega _0)$ we get, by applying H\"older inequality, that $Vu_e\in L^{\underline{n}}(\Omega_0 )\subset H^{-1}(\Omega_0 )$. 

Pick $\varphi \in C_0^\infty (\Omega_0 )$ satisfying $\varphi=1$ in a neighborhood of $\overline{\Omega}$. Then straightforward computations show that $w=\varphi u_e$ is the unique variational solution in $H_0^1(\Omega_0)$ of the BVP
\[
-\Delta w= f\; \mbox{in}\; \Omega_0 \quad \mbox{and}\quad  w=0\; \mbox{on}\; \partial \Omega _0 ,
\]
with 
\[
f=\varphi Vu_e-2\nabla u_e\cdot \nabla \varphi -\Delta \varphi u_e\in H^{-1}(\Omega_0 ).
\]
We complete then the proof by noting that $u=w_{|\Omega}$ possesses the required properties. 
\end{proof}

\subsection{Uniqueness}

\begin{theorem}\label{theorem-uniqueness}
Let $V,\tilde{V}\in L^{n/2}_\ast(\Omega )$ so that $\Lambda_V=\Lambda_{\tilde{V}}$. Then $V=\tilde{V}$.
\end{theorem}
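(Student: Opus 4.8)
The plan is to imitate the argument of Theorem \ref{theorem-bp1} (the bounded case) but now using the CGO solutions from Theorem \ref{theorem-cgo2} together with the integral identity of Lemma \ref{integral-identity}. First I would record the integral identity: for any $u\in \mathscr{S}_V$ and $\tilde u\in \mathscr{S}_{\tilde V}$, since $\gamma_0$ maps onto $H^{1/2}(\Gamma)$ and $\Lambda_V=\Lambda_{\tilde V}$, the right-hand side of \eqref{ii} vanishes, so that
\[
\int_\Omega (V-\tilde V)u\overline{\tilde u}\,dx=0,\qquad u\in \mathscr{S}_V,\ \tilde u\in \mathscr{S}_{\tilde V}.
\]
(One should check that the $H^1$ CGO solutions produced by Theorem \ref{theorem-cgo2} indeed lie in $\mathscr{S}_V$, which is immediate from their construction.)

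Next I would plug in the CGO solutions. Fix $\tilde x\in\mathbb{R}^{n-1}\setminus\overline{\Omega'}$, $\lambda\in\mathbb{R}$, $b\in C^\infty(\mathbb{S}^{n-2})$, and for $|\tau|$ large outside a countable set take
\[
u=e^{-\tau x_1}\bigl[e^{-i\tau r}e^{i\lambda(x_1+ir)}b(\theta)+R\bigr],\qquad
\tilde u=e^{\tau x_1}\bigl[e^{i\tau r}e^{-i\lambda(x_1+ir)}\tilde b(\theta)+\tilde R\bigr],
\]
i.e.\ the solution for $\tilde V$ is built with $\tau$ replaced by $-\tau$ (and with a possibly different spherical harmonic $\tilde b$), so that the exponentials $e^{\mp\tau x_1}$ and the $e^{\mp i\tau r}$ cancel in the product $u\overline{\tilde u}$. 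The product then has a main term $e^{i\lambda(x_1+ir)}e^{-i\lambda(x_1-ir)}b(\theta)\overline{\tilde b(\theta)}=e^{-2\lambda r}b(\theta)\overline{\tilde b(\theta)}$ (up to a harmless choice of conventions) plus remainder terms that are products of $R,\tilde R$ with the bounded pieces. Using $\|R\|_{L^{\overline n}(\Omega)}\le C$, $\|\tilde R\|_{L^{\overline n}(\Omega)}\le C$, $\|R\|_{L^2(\Omega)}\to 0$, $\|\tilde R\|_{L^2(\Omega)}\to 0$ and H\"older with $(V-\tilde V)\in L^{n/2}(\Omega)$, each remainder contribution to $\int_\Omega(V-\tilde V)u\overline{\tilde u}\,dx$ tends to $0$ as $|\tau|\to\infty$. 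Passing to the limit yields
\[
\int_\Omega (V-\tilde V)\,e^{-2\lambda r}\,b(\theta)\overline{\tilde b(\theta)}\,dx=0
\]
for all admissible $\lambda$, $b$, $\tilde b$ and all admissible centers $\tilde x$.

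Finally I would convert this family of identities into $V=\tilde V$. Writing $W=(V-\tilde V)\chi_\Omega\in L^{n/2}(\mathbb{R}^n)$, the identity says that a suitable partial Fourier transform / attenuated geodesic-ray transform of $W$ vanishes: integrating in $x_1$ first produces the Fourier transform in the $x_1$ variable (the factor $e^{i\lambda x_1}$ against $e^{-i\lambda x_1}$ actually combines to give, after also using the $x_1$-dependence, the Fourier transform at frequency determined by $2\lambda$ — more precisely one gets $\widehat{\,\cdot\,}$ of $\int W\,e^{-2\lambda r}b\overline{\tilde b}$ in $x_1$), and what remains is, for each fixed $x_1$-frequency, the vanishing of the attenuated ray transform with constant attenuation $2\lambda$ of the resulting function on $\mathbb{R}^{n-1}$, against arbitrary spherical harmonics $b\overline{\tilde b}$ and arbitrary exterior center $\tilde x$. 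Injectivity of this transform on simple domains (the standard ingredient of the Dos Santos Ferreira--Kenig--Salo argument; this is exactly where \cite{DKS} is invoked) forces $W\equiv 0$, hence $V=\tilde V$. The main obstacle is precisely this last step: justifying the limit $|\tau|\to\infty$ rigorously given that the set of allowed $\tau$ is only ``$|\tau|$ large outside a countable set'' (so one must take a sequence $\tau_j\to\infty$ avoiding the bad set), and then correctly identifying and inverting the attenuated ray transform — the bookkeeping of which spherical-harmonic modes and which lines one recovers, and citing the injectivity result in the exact form needed, is the delicate part; everything else is a routine adaptation of Theorem \ref{theorem-bp1}.
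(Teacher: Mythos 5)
Your overall strategy is the paper's: plug the $L^{n/2}$ CGO solutions of Theorem \ref{theorem-cgo2} into the integral identity of Lemma \ref{integral-identity}, kill the remainders, and invert an attenuated ray transform. But there is a concrete error in how you pair the two CGO solutions, and it is fatal to the conclusion. You take $\tilde u$ with amplitude $e^{i\tau r}e^{-i\lambda(x_1+ir)}\tilde b(\theta)$ and claim the main term of the product is $e^{-2\lambda r}b\overline{\tilde b}$ ``up to a harmless choice of conventions.'' The convention is not harmless: with $u\tilde u$ the factors $e^{\pm i\lambda(x_1+ir)}$ cancel and the main term is just $b\tilde b$ with no $\lambda$-dependence at all; with $u\overline{\tilde u}$ the factors $e^{-i\tau r}$ and $\overline{e^{i\tau r}}$ do \emph{not} cancel and you are left with a wildly oscillating $e^{-2i\tau r}$ whose contribution vanishes in the limit by Riemann--Lebesgue, giving $0=0$. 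In every reading, your limiting identity $\int_\Omega(V-\tilde V)e^{-2\lambda r}b\overline{\tilde b}\,dx=0$ carries no oscillation in $x_1$, so it only constrains the $x_1$-average of $V-\tilde V$ and cannot yield $V=\tilde V$. The paper avoids this by taking the second solution with amplitude $1$, i.e.\ $\tilde u=e^{\tau(x_1+ir)}(1+\tilde R)$, so the product of the main terms is $e^{i\lambda(x_1+ir)}b(\theta)=e^{i\lambda x_1}e^{-\lambda r}b(\theta)$, which retains both the Fourier variable $\lambda$ in $x_1$ and the attenuation in $r$; one then Fourier-transforms in $x_1$ to get $F(\lambda,r,\theta)$, uses injectivity of the attenuated $X$-ray transform along rays from $\tilde x$ for small $|\lambda|$, and finishes by analyticity of the Fourier--Laplace transform of the compactly supported $W$.

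A secondary gap: you assert that H\"older with $V-\tilde V\in L^{n/2}$ together with $\|R\|_{L^{\overline n}}=O(1)$ and $\|R\|_{L^2}=o(1)$ makes each remainder vanish in the limit, but the cross term bounded by $\|W\|_{L^{n/2}}\|R\|_{L^{\overline n}}\|\tilde R\|_{L^{\overline n}}$ is only $O(1)$. The paper needs the truncation $W=W_1+W_2$ with $W_1\in L^\infty$ and $\|W_2\|_{L^{n/2}}\le\epsilon$, pairing the $L^2$-smallness of $R,\tilde R$ against $W_1$ and the $L^{\overline n}$-boundedness against $W_2$; you should spell this out. Your concern about $\tau$ ranging over the complement of a countable set is not an issue (take a sequence $\tau_j\to\infty$ avoiding it), and your reduction to the attenuated ray transform is the right final step once the main term is corrected.
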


\begin{proof}
As $\Lambda_V=\Lambda_{\tilde{V}}$, Lemma \ref{integral-identity} gives
\begin{equation}\label{tu1}
\int_\Omega (V-\tilde{V})u\tilde{u}=0,\quad u\in \mathscr{S}_V,\; \tilde{u}\in \mathscr{S}_{\tilde{V}}.
\end{equation}
From Theorem \ref{theorem-cgo2}, for sufficiently large $|\tau |$ outside a countable set, we find $u\in \mathscr{S}_V$ of the form
\begin{equation}\label{tu2}
u=e^{-\tau (x_1+ir)}\left[e^{i\lambda (x_1+ir)}b(\theta)+R\right],
\end{equation}
with $\lambda \in \mathbb{R}$ and $b\in C^\infty(\mathbb{S}^{n-2})$, so that
\begin{equation}\label{tu3}
\|R\|_{L^{\overline{n}}(\Omega )}=O(1)\quad \mbox{and}\quad \|R\|_{L^2(\Omega )}=o(1)\quad \mbox{as}\; |\tau | \rightarrow \infty.
\end{equation}
Here $(r,\theta)$ are the polar coordinates with center $\tilde{x}\in \mathbb{R}^{n-1}\setminus{\overline{\Omega'}}$.

Similarly, for sufficiently large $|\tau |$ outside a countable set, we find $\tilde{u}\in \mathscr{S}_V$ of the form
\begin{equation}\label{tu4}
\tilde{u}=e^{\tau (x_1+ir)}(1+\tilde{R})
\end{equation}
so that
\begin{equation}\label{tu5}
\|\tilde{R}\|_{L^{\overline{n}}(\Omega )}=O(1)\quad \mbox{and}\quad \|\tilde{R}\|_{L^2(\Omega )}=o(1)\quad \mbox{as}\; |\tau | \rightarrow \infty.
\end{equation}

Taking in \eqref{tu1} $u$ and $\tilde{u}$ given respectively by \eqref{tu2} and \eqref{tu4}, we easily get, where $W=V-\tilde{V}$ extended by $0$ outside $\Omega$.
\begin{align}
&\int_{-\infty}^\infty \int_0^\infty\int_{\mathbb{S}^{n-2}} e^{i\lambda (x_1+ir)}W(x_1,r,\theta)b(\theta)dx_1drd\theta \label{tu6}
\\
&=-\int_\Omega |x'-\tilde{x}|^{2-n}W\left[R+e^{i\lambda (x_1+i|x'-\tilde{x}|)}b\left(\frac{x'-\tilde{x}}{|x'-\tilde{x}|}\right)\tilde{R}+R\tilde{R}\right]dx.\nonumber
\end{align}

Let $\epsilon>0$. As we have seen before, we can decompose $W$ in the form $W=W_1+W_2$ with $W_1\in L^\infty (\Omega )$ satisfies $\|W_1\|_{L^{n/2}(\Omega )}\le \|W\|_{L^{n/2}(\Omega )}$ and $W_2\in L^{n/2}(\Omega )$ is so that $\|W_2\|_{L^{n/2}(\Omega )}\le \epsilon$. In that case, we have
\begin{align*}
&C\left| \int_\Omega |x'-\tilde{x}|^{2-n}W\left[R+e^{i\lambda (x_1+i|x'-\tilde{x}|)}b\left(\frac{x'-\tilde{x}}{|x'-\tilde{x}|}\right)\tilde{R}+R\tilde{R}\right]dx\right|
\\
&\hskip 1cm \le \|W_1\|_{L^\infty(\Omega )}\left(\|R\|_{L^2(\Omega )}+\|\tilde{R}\|_{L^2(\Omega )}+\|R\|_{L^2(\Omega )}\|\tilde{R}\|_{L^2(\Omega )}\right)
\\
&\hskip 2cm +\|W_2\|_{L^{n/2}(\Omega)} \left(\|R\|_{L^{\overline{n}}(\Omega )}+\|\tilde{R}\|_{L^{\overline{n}}(\Omega )}+\|R\|_{L^{\overline{n}}(\Omega )}\|\tilde{R}\|_{L^{\overline{n}}(\Omega )}\right).
\end{align*}
This together with \eqref{tu3} and \eqref{tu5} imply that the right hand side of \eqref{tu6} goes to $0$ when $\tau$ tends to $\infty$. That is passing to the limit, when $|\tau|$ tends to $\infty$ in \eqref{tu6}, we find
\[
\int_{-\infty}^\infty \int_0^\infty\int_{\mathbb{S}^{n-2}} e^{i\lambda (x_1+ir)}W(x_1,r,\theta)b(\theta)dx_1drd\theta=0.
\] 

Set 
\[
F(\lambda ,r,\theta )=\int_{-\infty}^\infty e^{i\lambda x_1}W(x_1,r,\theta)dx_1.
\]
Then we obtain by applying Fubini's theorem
\[
\int_0^\infty\int_{\mathbb{S}^{n-2}} e^{-\lambda r}F(\lambda ,r,\theta)b(\theta)drd\theta=0.
\]
As $b$ is arbitrary in $C^\infty (\mathbb{S}^{n-2})$ and $C^\infty (\mathbb{S}^{n-2})$ is dense in $L^2(\mathbb{S}^{n-2})$, we find, by applying once again Fubini's theorem,
\[
\int_0^\infty e^{-\lambda r}F(\lambda ,r,\theta)dr=0,\quad \lambda \in \mathbb{R},\; \theta \in \mathbb{S}^{n-2}.
\]
For $|\lambda|$ small, the injectivity of the attenuated $X$-ray transform yields $F(\lambda ,r,\theta)=0$ for $x'=(r,\theta )\in \Omega'$. Since $F(\cdot ,r,\theta )$ is the Fourier-Laplace transform of $W(\cdot ,r,\theta )$, we deduce that $W=0$ in $\Omega$. That is to say $V=\tilde{V}$ in $\Omega$. 
\end{proof}

This section consists in a simplified version of the result in \cite{DKS}. More precisely, the authors prove in \cite{DKS} the following result:

\begin{theorem}\label{theoremURM}
Let $\Lambda_V$ and $\Lambda_{\tilde{V}}$ defined  by  substituting  de the Laplacian by the Laplace-Beltrami operator on Riemannian manifold $(\mathscr{M},g)$ which is admissible. If $V,\tilde{V}\in L_\ast^{n/2}(\mathscr{M})$ satisfy $\Lambda_V=\Lambda_{\tilde{V}}$ then $V=\tilde{V}$.
\end{theorem}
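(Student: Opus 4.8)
The plan is to transpose the proof of Theorem \ref{theorem-uniqueness} to the geometric setting, the two substantive changes being that the flat complex geometric optics solutions $e^{-\tau(x_1+ir)}(\cdots)$ must be replaced by solutions adapted to the product structure of an admissible manifold, and that the injectivity of the attenuated $X$-ray transform must be replaced by the injectivity of the \emph{attenuated geodesic ray transform} on the transversal simple manifold. As a preliminary reduction one uses a conformal rescaling: writing $g=c(\mathfrak{e}\oplus g_0)$ with $\mathscr{M}\Subset\mathbb{R}\times\mathscr{M}_0$ and $(\mathscr{M}_0,g_0)$ simple, a substitution $u\mapsto c^{\alpha}u$ with $\alpha=\alpha(n)$ conjugates $-\Delta_g+V$ into $-\Delta_{\mathfrak{e}\oplus g_0}+q$, where $q\in L^{n/2}(\mathscr{M})$ because $c$ is smooth and strictly positive, and $\Lambda_V=\Lambda_{\tilde V}$ becomes, after an explicit boundary conjugation, $\Lambda_q=\Lambda_{\tilde q}$. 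So it suffices to prove the uniqueness statement for the operator $-\Delta_{\mathfrak{e}\oplus g_0}+q$ on a domain of the product manifold $\mathbb{R}\times\mathscr{M}_0$.

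The integral identity of Lemma \ref{integral-identity} carries over verbatim once $\nabla u\cdot\nabla\overline v$ is read as $\langle du,dv\rangle_{g}$ and $dx$ as the Riemannian volume element, since Lemma \ref{lemma-mult} and the construction of $\gamma_1$ used only the Sobolev embedding $H^1\hookrightarrow L^{\overline n}$, valid on any compact manifold with boundary; hence $\Lambda_q=\Lambda_{\tilde q}$ yields $\int_{\mathscr{M}}(q-\tilde q)\,u\tilde u\,dV_g=0$ for all $u\in\mathscr{S}_q$, $\tilde u\in\mathscr{S}_{\tilde q}$. The technical heart is then the construction of the CGO solutions. One first needs the analogue of the operator $G_\tau$ of Theorem \ref{theoremG}: a right (and, on $C_0^\infty$, left) inverse of $e^{\tau x_1}(-\Delta_{\mathfrak{e}\oplus g_0})e^{-\tau x_1}$, obtained by Fourier transform in $x_1$ and spectral analysis of $-\Delta_{g_0}$ on $\mathscr{M}_0$, enjoying $\|G_\tau f\|_{L^2}\le C|\tau|^{-1}\|f\|_{L^2}$, $\|G_\tau f\|_{H^1}\le C\|f\|_{L^2}$ and, crucially, $\|G_\tau f\|_{L^{\overline n}}\le C\|f\|_{L^{\underline n}}$ with $C$ independent of $\tau$; these bounds are supplied by \cite{DKS}. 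With $G_\tau$ in hand one reproduces Lemma \ref{lemma-wp}, building the free CGO solution $u_0=e^{-\tau(x_1+i\rho)}\bigl(a_\lambda\,b+R_0\bigr)$, where $\rho$ is the $g_0$-distance to a point lying outside $\mathscr{M}_0$ in a simple extension (smooth with non-vanishing differential because $\mathscr{M}_0$ is simple, so that $x_1+i\rho$ is a null phase), $a_\lambda$ solves the associated first-order transport equation along the geodesics issued from that point (globally solvable on $\mathscr{M}_0$, again by simplicity), $b$ is an arbitrary smooth angular factor, and $R_0=G_\tau f$ with $f$ bounded in $L^2\cap L^{\underline n}$ uniformly in $\tau$. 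Then, exactly as in Lemma \ref{lemma-multi} and Theorem \ref{theorem-cgo2}, one writes $q=|q|^{1/2}W$, uses $\|\phi G_\tau\psi\|_{\mathscr{B}(L^2)}\le C\|\phi\|_{L^n}\|\psi\|_{L^n}$ together with $\|\phi G_\tau\psi\|_{\mathscr{B}(L^2)}\to0$ to invert $1+WG_\tau|q|^{1/2}$ for large $|\tau|$, and corrects $u_0$ into an exact solution $u\in\mathscr{S}_q$ with remainder $R$ bounded in $L^{\overline n}$ and $o(1)$ in $L^2$; simultaneously one builds the second family $\tilde u\in\mathscr{S}_{\tilde q}$ of the form $e^{\tau(x_1+i\rho)}(1+\tilde R)$.

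Inserting the two families into the integral identity, splitting $W=W_1+W_2$ with $W_1\in L^\infty$ and $\|W_2\|_{L^{n/2}}$ small exactly as in the proof of Theorem \ref{theorem-uniqueness}, and letting $|\tau|\to\infty$ kills every remainder contribution and leaves an identity of the form $\int_{\mathscr{M}_0}e^{-\lambda\rho}\,\widehat W(\lambda,\cdot)\,b=0$, where $\widehat W(\lambda,\cdot)$ is the Fourier--Laplace transform of $W$ in $x_1$. As $b$ and the base point defining $\rho$ vary, this says precisely that the attenuated geodesic ray transform of $\widehat W(\lambda,\cdot)$, with attenuation $\lambda$, vanishes on $\mathscr{M}_0$. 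For $|\lambda|$ small this transform is injective on the simple manifold $\mathscr{M}_0$, so $\widehat W(\lambda,\cdot)\equiv0$ for small real $\lambda$; since $\lambda\mapsto\widehat W(\lambda,\cdot)$ is analytic one concludes $W\equiv0$, that is $q=\tilde q$, whence $V=\tilde V$.

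The main obstacle is the construction of $G_\tau$ on the product manifold with the sharp mixed-norm bound $\|G_\tau f\|_{L^{\overline n}}\le C\|f\|_{L^{\underline n}}$, uniform in $\tau$: in the flat case this rests on explicit Faddeev-type Green kernels and uniform Sobolev (Carleman) estimates of Kenig--Ruiz--Sogge/Jerison--Kenig type, whereas on $\mathbb{R}\times\mathscr{M}_0$ one must combine the one-dimensional analysis at the shifted complex frequency (which by itself gives no integrability gain) with uniform resolvent/spectral-cluster estimates for $-\Delta_{g_0}$ on $\mathscr{M}_0$ and then interpolate, carefully tracking the $\tau$-dependence. A secondary delicate input, quoted from the literature, is the injectivity of the attenuated geodesic ray transform on simple manifolds for small attenuation. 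Both ingredients are established in \cite{DKS}, so the role of the present argument is to organize them along the lines of Section~4.
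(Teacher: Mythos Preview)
The paper does not prove Theorem \ref{theoremURM}; it merely states the result and attributes it to \cite{DKS}, remarking that Section~4 is a simplified Euclidean version of that work. There is therefore no proof in the text to compare against.

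That said, your outline is an accurate summary of the strategy in \cite{DKS}: the conformal reduction from $g=c(\mathfrak{e}\oplus g_0)$ to the product metric, the manifold analogue of the operator $G_\tau$ with the uniform $L^{\underline n}\to L^{\overline n}$ bound, the CGO construction paralleling Lemma \ref{lemma-wp} and Theorem \ref{theorem-cgo2} with phase $x_1+i\rho$ built from the transversal distance function, the remainder control via the decomposition $q=|q|^{1/2}W$ and Lemma \ref{lemma-multi}, and the conclusion by injectivity of the attenuated geodesic ray transform on the simple manifold $\mathscr{M}_0$ for small attenuation. You have also correctly identified the two substantive inputs that must be imported wholesale from \cite{DKS} --- the uniform $L^{\underline n}\to L^{\overline n}$ estimate for $G_\tau$ on $\mathbb{R}\times\mathscr{M}_0$ (which in that setting relies on spectral cluster estimates for $-\Delta_{g_0}$ rather than the flat Kenig--Ruiz--Sogge machinery) and the ray-transform injectivity --- and rightly noted that the remainder of the argument is organizational.
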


\section{Borg-Levinson type theorem}

\subsection{$W^{s,p}$ spaces}

Let $\mathcal{O}$ be an open subset of $\mathbb{R}^n$. Let $1<p<\infty$ and $s=m+\sigma$ with $m\in \mathbb{N}$ and $0<\sigma <1$. We denote by $W^{s,p}(\mathcal{O})$ the subspace of functions $u\in W^{m,p}(\mathcal{O})$ so that
\[
[u]_\sigma =\sum_{|\alpha|=m}\int_{\mathcal{O}}\int_{\mathcal{O}}\frac{|\partial^\alpha u(x)-\partial^\alpha u(y)|^p}{|x-y|^{n+p\sigma}}dxdy<\infty .
\]
The space $W^{s,p}(\mathcal{O})$ endowed with its natural norm
\[
\|u\|_{W^{s,p}(\Omega )}=\|u\|_{W^{m,p}(\Omega )}+[u]_\sigma
\]
is a Banach space.

The closure of $C_0^\infty(\mathcal{O})$ in  $W^{s,p}(\mathcal{O})$ is denoted by $W_0^{s,p}(\mathcal{O})$.

When $\Omega$ is of class $C^{k,1}$\footnote{A function is of class $C^{k,1}$ if it is of class $C^k$ and all its partial derivatives of order $k$ are of class $C^{0,1}$}, the construction of $W^{s,p}(\Gamma )$ from $W^{s,p}(\mathbb{R}^{n-1})$, $|s|\le k+1$, is quite similar to that usually used to construct $H^s(\Gamma )$ from $H^s(\mathbb{R}^{n-1})$. That is by means of local cards and a partition of unity. We refer to \cite[Section 1.3.3, page 19]{Gr} for details.

The following trace theorem will be useful in the sequel.

\begin{theorem}\label{trace-theorem}
$($\cite{Gr}$)$ Let $s\in (1,2)$ is so that $s-\frac{1}{\underline{n}}\not\in \mathbb{N}$ and $s-1/\underline{n}=1+\sigma$, $0<\sigma<1$. 
Then the mapping 
\[
u\in C^2(\overline{\Omega})\mapsto (u_{|\Gamma},\partial_\nu u_{\Gamma})\in C^2(\Gamma)\times C^{0,1}(\Gamma )
\]
has unique bounded extension, denoted by $(\gamma_0,\gamma_1)$, as an operator from $W^{s,\underline{n}}(\Omega )$ onto $W^{s-1/\underline{n},\underline{n}}(\Gamma )\times W^{s-1-1/\underline{n},\underline{n}}(\Gamma )$. This operator has a bounded right inverse.
\end{theorem}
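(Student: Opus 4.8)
The statement is the classical Sobolev--Slobodeckij trace theorem for $\Omega$ of class $C^{1,1}$, and the plan is to obtain it by the usual reduction to the half-space followed by localization. Write $p=\underline{n}$ and $\mathbb{R}^n_+=\mathbb{R}^{n-1}\times(0,\infty)$. Since $\Gamma$ is $C^{1,1}$, cover it by finitely many boundary charts $(\mathcal{U}_j,\Phi_j)$, each $\Phi_j$ a $C^{1,1}$-diffeomorphism carrying $\Gamma\cap\mathcal{U}_j$ into $\{x_n=0\}$ and $\Omega\cap\mathcal{U}_j$ into $\mathbb{R}^n_+$, and take a subordinate partition of unity. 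Because $s\in(1,2)$, one derivative of the change of variables $u\mapsto u\circ\Phi_j^{-1}$ produces only \emph{Lipschitz} coefficients, and Lipschitz functions are pointwise multipliers on $W^{\sigma,p}$ for $0<\sigma<1$; hence these changes of variables are bounded on $W^{s,p}$ and on the boundary spaces $W^{s-1/p,p}$ and $W^{s-1-1/p,p}$, so it suffices to prove: the map $u\mapsto(u(\cdot,0),\partial_{x_n}u(\cdot,0))$, defined for $u\in C_0^\infty(\overline{\mathbb{R}^n_+})$, extends to a bounded surjection
\[
W^{s,p}(\mathbb{R}^n_+)\longrightarrow W^{s-1/p,p}(\mathbb{R}^{n-1})\times W^{s-1-1/p,p}(\mathbb{R}^{n-1})
\]
admitting a bounded right inverse.

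For boundedness I would, by density of $C_0^\infty(\overline{\mathbb{R}^n_+})$ in $W^{s,p}(\mathbb{R}^n_+)$, establish the a priori estimate for smooth $u$, isolating as the key lemma the fractional half-space trace estimate: if $1/p<\mu<1$, then $\|v(\cdot,0)\|_{W^{\mu-1/p,p}(\mathbb{R}^{n-1})}\le C\|v\|_{W^{\mu,p}(\mathbb{R}^n_+)}$. Writing $s=1+\tau$, the hypotheses force $\tau\in(1/p,1)$ and $\sigma:=\tau-1/p\in(0,1)$, the excluded value $s-1/p\in\mathbb{N}$ being exactly what would make the target a Besov rather than a Slobodeckij space. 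Then $g_1=\partial_{x_n}u(\cdot,0)$ is controlled because $\partial_{x_n}u\in W^{\tau,p}(\mathbb{R}^n_+)$; and for $g_0=u(\cdot,0)$ one uses $\partial_ig_0=(\partial_iu)(\cdot,0)$, $i=1,\dots,n-1$, with $\partial_iu\in W^{\tau,p}(\mathbb{R}^n_+)$, together with the elementary bound $g_0\in L^p(\mathbb{R}^{n-1})$ coming from $u\in W^{1,p}(\mathbb{R}^n_+)$, to conclude $g_0\in W^{1+\sigma,p}(\mathbb{R}^{n-1})$ with the required norm. The proof of the key lemma is the analytic core: one reduces to $v\in C_0^\infty(\overline{\mathbb{R}^n_+})$, compares $v(y',0)$ with averages of $v(y',\cdot)$ over intervals of length $|y'-z'|$, and bounds the Slobodeckij double integral over $\mathbb{R}^{n-1}\times\mathbb{R}^{n-1}$ by splitting it into near- and far-diagonal regions and applying Minkowski's integral inequality together with a one-dimensional Hardy inequality.

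For surjectivity with a bounded right inverse I would exhibit an explicit extension. Fix $\rho\in C_0^\infty(\mathbb{R}^{n-1})$ with $\int\rho=1$, set $\rho_t(y)=t^{1-n}\rho(y/t)$, and pick $\eta\in C_0^\infty([0,\infty))$ with $\eta\equiv1$ near $0$; define
\[
\mathscr{R}(g_0,g_1)(x',x_n)=\eta(x_n)\bigl[(\rho_{x_n}\ast g_0)(x')+x_n\,(\rho_{x_n}\ast g_1)(x')\bigr].
\]
A direct computation gives $\mathscr{R}(g_0,g_1)(\cdot,0)=g_0$ and $\partial_{x_n}\mathscr{R}(g_0,g_1)(\cdot,0)=g_1$, and the bound $\|\mathscr{R}(g_0,g_1)\|_{W^{s,p}(\mathbb{R}^n_+)}\le C\bigl(\|g_0\|_{W^{s-1/p,p}(\mathbb{R}^{n-1})}+\|g_1\|_{W^{s-1-1/p,p}(\mathbb{R}^{n-1})}\bigr)$ follows from standard scaling estimates for extension operators of this type, using $s<2$ so that only the second-order $x_n$-derivatives need be controlled and these fall on $g_1$ at precisely the right regularity. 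Patching $\mathscr{R}$ through the charts produces a bounded right inverse on $\Omega$, which in particular yields surjectivity. The main obstacle I anticipate is the sharp trace estimate in the non-Hilbert regime $p=\underline{n}\ne2$: the clean Fourier-multiplier argument available when $p=2$ is unavailable, so one must work directly with Slobodeckij seminorms, track the loss of $1/p$ derivatives carefully, and verify that $s-1/p\in\mathbb{N}$ is the only excluded case. Since this is a standard result, in practice one simply invokes \cite{Gr}, where exactly this chain of reductions is carried out in detail.
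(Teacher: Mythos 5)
The paper does not prove this theorem at all: it is quoted directly from Grisvard \cite{Gr}, so there is no internal proof to compare yours against. Your sketch is the standard argument --- flattening the boundary with $C^{1,1}$ charts (whose Jacobians are Lipschitz, hence pointwise multipliers on $W^{\sigma,p}$ for $0<\sigma<1$), reducing to the half-space trace estimate $W^{\mu,p}(\mathbb{R}^{n-1}\times(0,\infty))\to W^{\mu-1/p,p}(\mathbb{R}^{n-1})$ for $1/p<\mu<1$ applied to $\partial_{x_n}u$ and to the tangential gradient of $u$, and building the right inverse by a dilated-mollifier extension --- and this is essentially the chain of reductions carried out in \cite{Gr}, so the plan is sound; your bookkeeping of the exponents ($\tau=s-1\in(1/\underline{n},1)$, $\sigma=\tau-1/\underline{n}$) is correct. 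One concrete fix is needed in the extension operator: with only $\int\rho=1$ the normal derivative of the first summand at $x_n=0$ is $\partial_t(\rho_t\ast g_0)\big|_{t=0}=-\bigl(\int y\,\rho(y)\,dy\bigr)\cdot\nabla g_0$, which need not vanish, so the claimed identity $\partial_{x_n}\mathscr{R}(g_0,g_1)(\cdot,0)=g_1$ requires the first moments of $\rho$ to vanish (take $\rho$ even). Beyond that, the analytic core --- the fractional half-space trace lemma and the $W^{s,\underline{n}}$ bound on $\mathscr{R}$ in the non-Hilbert case $\underline{n}\neq 2$ --- is only described, not proved; since the paper itself treats the theorem as a black box from \cite{Gr}, that level of detail is acceptable here, but it is where all the work lies if one insists on a self-contained proof.
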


\subsection{$W^{2,p}$-regularity}

In the rest of this section, all potentials we consider are assumed to be real valued.

We consider the non homogenous BVP
\begin{equation}\label{nhbvp1}
\left\{
\begin{array}{ll}
(-\Delta +V- \lambda)u=0\quad &\mbox{in}\; \Omega,
\\
u=f &\mbox{on}\; \Gamma .
\end{array}
\right.
\end{equation}

\begin{theorem}\label{theorem-exist}
Pick $V\in L^{n/2}(\Omega )$ and let $\lambda \in \rho (A_V)$. Let $f\in W^{2-1/\underline{n},\underline{n}}(\Gamma )$. Then the BVP \eqref{nhbvp1} has a unique solution $u=u_V(\lambda)(f )\in W^{2,\underline{n}}(\Omega )$. In addition, there exists a constant $C>0$, only depending on $n$, $\Omega$, $V$ and $\lambda$, so that
\begin{equation}\label{nhbvp0}
\|u_V(\lambda)(f)\|_{W^{2,\underline{n}}(\Omega )}\le C\|f\|_{W^{2-1/\underline{n},\underline{n}}(\Gamma )}.
\end{equation}
\end{theorem}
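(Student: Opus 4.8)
The plan is to reduce to the homogeneous boundary case and then invoke elliptic regularity together with the resolvent $R_V(\lambda)$ constructed from the spectral decomposition. First I would use the trace theorem (Theorem \ref{trace-theorem}) with the exponent choice $s=2$, $p=\underline{n}$: since $2-1/\underline{n}=1+\sigma$ with $\sigma=1-1/\underline{n}\in(0,1)$, there is a bounded right inverse $\mathscr{E}_{\underline{n}}:W^{2-1/\underline{n},\underline{n}}(\Gamma)\to W^{2,\underline{n}}(\Omega)$, so I can write $u=w+\mathscr{E}_{\underline{n}}f$ where $w\in W^{2,\underline{n}}_0$-type space must solve
\[
(-\Delta+V-\lambda)w=-(-\Delta+V-\lambda)\mathscr{E}_{\underline{n}}f=:g\quad\text{in }\Omega,\qquad \gamma_0 w=0.
\]
Here I must check that $g\in L^{\underline{n}}(\Omega)$: the term $-\Delta \mathscr{E}_{\underline{n}}f$ lies in $L^{\underline{n}}$ by construction, and $V\mathscr{E}_{\underline{n}}f\in L^{\underline{n}}$ follows from Hölder (the same computation as in \eqref{p4}) since $V\in L^{n/2}$ and $\mathscr{E}_{\underml{n}}f\in W^{2,\underline{n}}\hookrightarrow L^{q}$ for a suitable $q$; the Sobolev embedding exponents here are the routine bookkeeping part.

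Next I would solve the homogeneous-boundary problem. Since $\lambda\in\rho(A_V)$, the operator $A_V-\lambda:H_0^1(\Omega)\to H^{-1}(\Omega)$ is boundedly invertible, so there is a unique weak solution $w\in H_0^1(\Omega)$ with $(-\Delta+V-\lambda)w=g$. It remains to bootstrap this $H^1$ solution up to $W^{2,\underline{n}}$. Rewrite the equation as $-\Delta w=g-Vw+\lambda w$ in $\Omega$, $w|_\Gamma=0$, and apply the Calderón–Zygmund $W^{2,p}$ estimate for the Dirichlet Laplacian on the $C^{1,1}$ domain $\Omega$: one needs the right-hand side in $L^{\underline{n}}$. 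Again $g\in L^{\underline{n}}$; for $Vw$ one uses $V\in L^{n/2}$, $w\in H^1\hookrightarrow L^{\overline{n}}$ and the Hölder pairing $\tfrac{1}{n/2}+\tfrac{1}{\overline{n}}=\tfrac{2}{n}+\tfrac{n-2}{2n}=\tfrac{n+2}{2n}=\tfrac{1}{\underline{n}}$, so $Vw\in L^{\underline{n}}(\Omega)$; and $\lambda w\in L^2\subset L^{\underline{n}}$. Hence $w\in W^{2,\underline{n}}(\Omega)$, and $u=w+\mathscr{E}_{\underline{n}}f\in W^{2,\underline{n}}(\Omega)$ solves \eqref{nhbvp1}. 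Uniqueness follows because any two $W^{2,\underline{n}}$ solutions differ by an element of $H_0^1(\Omega)$ (via the trace theorem, $W^{2,\underline{n}}\hookrightarrow H^1$ in this range) annihilated by the invertible $A_V-\lambda$.

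For the estimate \eqref{nhbvp0} I would chain the bounds: $\|\mathscr{E}_{\underline{n}}f\|_{W^{2,\underline{n}}}\le C\|f\|_{W^{2-1/\underline{n},\underline{n}}(\Gamma)}$ from the trace theorem; $\|g\|_{L^{\underline{n}}}\le C\|f\|_{W^{2-1/\underline{n},\underline{n}}(\Gamma)}$ from the previous step; $\|w\|_{H^1}\le \|(A_V-\lambda)^{-1}\|\,\|g\|_{H^{-1}}\le C\|g\|_{L^{\underline{n}}}$; and the Calderón–Zygmund estimate $\|w\|_{W^{2,\underline{n}}}\le C(\|g\|_{L^{\underline{n}}}+\|Vw\|_{L^{\underline{n}}}+\|\lambda w\|_{L^{\underline{n}}})\le C\|g\|_{L^{\underline{n}}}+C\|w\|_{H^1}$, where $\|Vw\|_{L^{\underline{n}}}\le \mathfrak{e}_\Omega\|V\|_{L^{n/2}}\|w\|_{H^1}$. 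Combining gives \eqref{nhbvp0} with $C$ depending on $n,\Omega,V,\lambda$ (through $\|(A_V-\lambda)^{-1}\|$ and $\|V\|_{L^{n/2}}$).

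The main obstacle I anticipate is making the regularity bootstrap fully rigorous when $V$ is merely in $L^{n/2}$: the product $Vw$ sits exactly in $L^{\underline{n}}$ but no better, so one cannot iterate to gain further smoothness, and one must be careful that the single application of the $W^{2,\underline{n}}$ Calderón–Zygmund estimate is legitimate for a solution initially known only to be in $H^1$ — this is handled by an approximation/difference-quotient argument, or by first regularizing $V$, obtaining uniform $W^{2,\underline{n}}$ bounds, and passing to the limit, exactly in the spirit of the limiting argument used in the proof of Proposition \ref{proposition-sol}. The boundary regularity (that $\Omega$ being $C^{1,1}$ suffices for the $W^{2,p}$ Dirichlet theory) is standard and I would simply cite it.
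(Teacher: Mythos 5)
Your proposal follows essentially the same route as the paper's proof: extend $f$ to an element of $W^{2,\underline{n}}(\Omega)$, reduce to a homogeneous Dirichlet problem with right-hand side in $L^{\underline{n}}(\Omega)$, solve it via the invertibility of $A_V-\lambda$, and upgrade the solution to $W^{2,\underline{n}}(\Omega)$ using the Gilbarg--Trudinger $W^{2,p}$ theory together with the H\"older pairing of $V\in L^{n/2}(\Omega)$ against $w\in L^{\overline{n}}(\Omega)$. The approximation step you flag as necessary to legitimize the elliptic estimate is precisely what the paper carries out (it approximates the right-hand side by $L^2$ functions, derives uniform $W^{2,\underline{n}}$ bounds, and passes to a weak limit in the reflexive space $W^{2,\underline{n}}(\Omega)$), so your argument and estimate chain match the paper's.
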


\begin{proof}
Let $\mathcal{E}f\in W^{2,\underline{n}}(\Omega )$ so that $\gamma_0\mathcal{E}f=f$ and
\[
\|\mathcal{E}f\|_{W^{2,\underline{n}}(\Omega )}\le c_\Omega \|f\|_{W^{2-1/\underline{n},\underline{n}}(\Gamma )}.
\]
If  $F=-(-\Delta +V- \lambda)\mathcal{E}f$ then $F\in L^{\underline{n}}(\Omega )$ and
\begin{equation}\label{nhbvp2}
\|F\|_{L^{\underline{n}}(\Omega )}\le C\|f\|_{W^{2-1/\underline{n},\underline{n}}(\Gamma )}.
\end{equation}
By density, we can find a sequence $(F_k)$ in $L^2(\Omega )$ converging in $L^{\underline{n}}(\Omega )$ to $F$. For any $k$, set
\begin{equation}\label{nhbvp3}
v_k=R_V(\lambda )F_k.
\end{equation}
Then $v_k\in H_0^1(\Omega )$ and hence $v_k\in L^{\overline{n}}(\Omega )$. Whence,
\[
-\Delta v_k= -Vv_k+\lambda v_k+F_k \in L^{\underline{n}}(\Omega ).
\]

By \cite[Theorem 9.15, page 241]{GT}, $v_k\in W^{2,\underline{n}}(\Omega )$. Therefore we get in light of \cite[Theorem 9.14, page 240]{GT} that
\[
\|v_k\|_{W^{2,\underline{n}}(\Omega )}\le C_0\|(-\Delta +\lambda_0)v_k\|_{L^{\underline{n}}(\Omega )},
\]
the constants $\lambda_0>0$ and $C_0>0$ only depend on $n$ and $\Omega$. Consequently,
\begin{align*}
\|v_k\|_{W^{2,\underline{n}}(\Omega )}&\le C_0\|-Vv_k+(\lambda+\lambda_0) v_k+F_k\|_{L^{\underline{n}}(\Omega )}
\\
&\le C_0\left(  \|V\|_{L^{n/2}(\Omega )}\|v\|_{L^{\overline{n}}(\Omega )}+|\lambda +\lambda_0|\|v\|_{L^{\underline{n}}(\Omega )}+\|F_k\|_{L^{\underline{n}}(\Omega )}\right).
\end{align*}

From \eqref{nhbvp3}, $(v_k)$ is bounded in $H_0^1(\Omega )$ and hence it is also bounded in $L^{\overline{n}}(\Omega )$. This and the last inequalities show that $(v_k)$ is bounded in $W^{2,\underline{n}}(\Omega )$ with
\begin{equation}\label{nhbvp4}
\|v_k\|_{W^{2,\underline{n}}(\Omega )}\le C\|F_k\|_{L^{\underline{n}}(\Omega )}.
\end{equation}

Now as $W^{2,\underline{n}}(\Omega )$ is reflexive, subtracting if necessary a subsequence, we may assume that $(v_k)$ converges weakly in $W^{2,\underline{n}}(\Omega )\cap W_0^{1,\underline{n}}(\Omega )$ to $v\in W^{2,\underline{n}}(\Omega )\cap W_0^{1,\underline{n}}(\Omega )$. 

Whence $-\Delta v+Vv-\lambda v=F$ in the distributional sense. 

Using that a norm is weakly lower semi-continuous, we get from inequality \eqref{nhbvp4} 
 \begin{equation}\label{nhbvp5}
\|v\|_{W^{2,\underline{n}}(\Omega )}\le C\|F\|_{L^{\underline{n}}(\Omega )}.
\end{equation}
The function $u=\mathcal{E}f+v\in W^{2,\underline{n}}(\Omega )$ is clearly a solution of the BVP \eqref{nhbvp1} and inequality \eqref{nhbvp0}  is a straightforward  consequence of inequalities \eqref{nhbvp2} and \eqref{nhbvp5}. 

The uniqueness of solutions of the BVP \eqref{nhbvp1} follows from the fact that $\lambda$ is not an eigenvalue of $ A_V$. 
\end{proof}

Theorem \ref{theorem-exist} allows us to define a family of DN maps associated $V\in L^{n/2}(\Omega )$: 
\[
\Lambda_V(\lambda ):f\mapsto \gamma_1 u_V(\lambda)(f) ,\quad \lambda\in \rho (A_V).
\]
According to estimate \eqref{nhbvp0} and Theorem \ref{trace-theorem}, $\Lambda_V(\lambda )$ defines a bounded operator between $W^{2-1/\underline{n},\underline{n}}(\Gamma )$ and $W^{1-1/\underline{n},\underline{n}}(\Gamma )$.

\subsection{From spectral data to DN maps}

Note that similar arguments as in the proof of Theorem \ref{theorem-exist} allow us to derive the following estimate, where $V\in L^{n/2}(\Omega )$,
\begin{equation}\label{efe}
\|\phi_V^k\|_{W^{2,\underline{n}}(\Omega )}\le C(|\lambda_k|+1),\quad k\ge 1.
\end{equation}

Henceforward,  we set $\psi_V^k=\gamma_1\phi_V^k$, $k\ge 1$.

\begin{lemma}\label{lemma-derivative}
Let $V\in L^{n/2}(\Omega )$. For any integer $m>n/2+1$, we have 
\begin{equation}\label{series0}
\frac{d^m}{d\lambda ^m}\Lambda_V(\lambda )f=m!\sum_{k\ge 1}\frac{1}{\left(\lambda_V^k-\lambda \right)^{m+1}}(f,\psi_V^k)_{L^2(\Gamma )}\, \psi_V^k.
\end{equation}
\end{lemma}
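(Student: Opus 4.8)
The plan is to start from the spectral representation of the resolvent $R_V(\lambda) = (A_V-\lambda)^{-1}$ and use it to write down an explicit formula for $u_V(\lambda)(f)$, then differentiate in $\lambda$ and apply $\gamma_1$ term by term. First I would recall that from the construction in Theorem \ref{theorem-exist}, $u_V(\lambda)(f) = \mathcal{E}f + v$ where $v = R_V(\lambda)F$ with $F = -(-\Delta+V-\lambda)\mathcal{E}f$. Writing $F = F_0 - \lambda\,\mathcal{E}f$ with $F_0 = -(-\Delta+V)\mathcal{E}f$ independent of $\lambda$, and expanding in the orthonormal basis $(\phi_V^k)$, one gets
\[
u_V(\lambda)(f) = \mathcal{E}f + \sum_{k\ge 1}\frac{(F_0,\phi_V^k)_{L^2(\Omega)} - \lambda(\mathcal{E}f,\phi_V^k)_{L^2(\Omega)}}{\lambda_V^k-\lambda}\,\phi_V^k.
\]
The key algebraic simplification is that $(F_0,\phi_V^k)_{L^2(\Omega)} - \lambda(\mathcal{E}f,\phi_V^k)_{L^2(\Omega)} = (\lambda_V^k-\lambda)(\mathcal{E}f,\phi_V^k)_{L^2(\Omega)} - \langle \gamma_1\phi_V^k, f\rangle$, which follows by applying the generalized Green's formula (or the definition of $\gamma_1$ in Lemma \ref{lemma-nor}) to the pair $\mathcal{E}f$ and $\phi_V^k$, using $(-\Delta+V)\phi_V^k = \lambda_V^k\phi_V^k$. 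This collapses the expression to
\[
u_V(\lambda)(f) = \mathcal{E}f + \sum_{k\ge 1}(\mathcal{E}f,\phi_V^k)_{L^2(\Omega)}\,\phi_V^k - \sum_{k\ge 1}\frac{(f,\psi_V^k)_{L^2(\Gamma)}}{\lambda_V^k-\lambda}\,\phi_V^k,
\]
where the first two terms sum to $\mathcal{E}f$ minus nothing relevant — in any case their $\lambda$-derivative vanishes.

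Next I would differentiate $m$ times in $\lambda$: formally,
\[
\frac{d^m}{d\lambda^m}u_V(\lambda)(f) = m!\sum_{k\ge 1}\frac{(f,\psi_V^k)_{L^2(\Gamma)}}{(\lambda_V^k-\lambda)^{m+1}}\,\phi_V^k,
\]
and then apply $\gamma_1$ to both sides, using linearity and continuity of $\gamma_1$ to pass it inside the series, together with $\gamma_1\phi_V^k = \psi_V^k$. The main technical obstacle — and the place where the hypothesis $m > n/2+1$ enters — is justifying the termwise differentiation and the termwise application of $\gamma_1$, i.e. uniform convergence of the differentiated series in a topology strong enough to commute with $\gamma_1: W^{2,\underline n}(\Omega)\to W^{1-1/\underline n,\underline n}(\Gamma)$. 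Here I would use Weyl's asymptotics \eqref{weyl}, $\lambda_V^k\sim c_\Omega k^{2/n}$, so that $|\lambda_V^k-\lambda|^{-(m+1)} \le C\, k^{-2(m+1)/n}$ for $\lambda$ in a compact subset of $\rho(A_V)$ and $k$ large; combined with the bound \eqref{efe}, $\|\phi_V^k\|_{W^{2,\underline n}(\Omega)}\le C(|\lambda_V^k|+1)\le C k^{2/n}$, and the boundedness $|(f,\psi_V^k)_{L^2(\Gamma)}| \le \|f\|\,\|\psi_V^k\|$ with $\|\psi_V^k\| = \|\gamma_1\phi_V^k\| \le C k^{2/n}$ again via \eqref{efe} and the trace theorem, the general term of the series in $W^{2,\underline n}(\Omega)$ is bounded by $C k^{6/n}k^{-2(m+1)/n} = C k^{(4-2m)/n}$ — wait, this needs $m$ large; more carefully one uses $L^2(\Omega)$ convergence for the series $\sum (f,\psi_V^k)(\lambda_V^k-\lambda)^{-(m+1)}\phi_V^k$ where the $\phi_V^k$ are orthonormal, giving an $\ell^2$ condition $\sum k^{4/n}k^{-4(m+1)/n} < \infty$, which holds once $4(m+1)/n - 4/n > 1$, i.e. $m > n/4$; the sharper condition $m>n/2+1$ is what is needed to upgrade convergence to the $W^{2,\underline n}$ norm so that $\gamma_1$ may be applied termwise and the resulting series converges in $W^{1-1/\underline n,\underline n}(\Gamma)$.

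Thus the skeleton is: (1) express $u_V(\lambda)(f)$ via the resolvent spectral sum; (2) perform the Green's-formula simplification to isolate the $\lambda$-dependence in the scalar coefficients $(f,\psi_V^k)_{L^2(\Gamma)}/(\lambda_V^k-\lambda)$; (3) differentiate termwise and apply $\gamma_1$; (4) justify all interchanges of limit, derivative, and $\gamma_1$ using Weyl's law \eqref{weyl}, the eigenfunction bound \eqref{efe}, and the trace estimates, with $m>n/2+1$ ensuring absolute convergence in the relevant function spaces. The hard part is step (4), the convergence bookkeeping; steps (1)–(3) are essentially formal once the spectral representation is in hand.
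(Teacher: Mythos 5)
Your proposal follows essentially the same route as the paper: expand $u_V(\lambda)(f)$ in the eigenbasis via the resolvent, use Green's formula to turn the coefficients into the boundary pairings $(f,\psi_V^k)_{L^2(\Gamma )}$, differentiate termwise, and justify applying $\gamma_1$ term by term through norm convergence of the differentiated series in $W^{2,\underline{n}}(\Omega )$, obtained from Weyl's law \eqref{weyl} and the eigenfunction bound \eqref{efe} under the hypothesis $m>n/2+1$. The only cosmetic difference is that the paper chooses the extension $F$ harmonic (so that $u_V(\lambda )(f)=F-R_V(\lambda )((V-\lambda )F)$), which streamlines the algebra you carry out with a generic $\mathcal{E}f$; as you noticed, the exponent count only closes if the Green's-formula reduction to $(f,\psi_V^k)_{L^2(\Gamma )}$ is performed before estimating, which is exactly how the paper reaches the $O(k^{-2(m-1)/n})$ bound on the general term.
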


\begin{proof}
For $f\in W^{2-1/\underline{n},\underline{n}}(\Gamma)$, let $F\in W^{2,\underline{n}}(\Omega )$ satisfying $\Delta F=0$ in $\Omega$, $\gamma_0F=f$ and
\[
\|F\|_{W^{2,\underline{n}}(\Omega )}\le C_\Omega \|f\|_{W^{2-1/\underline{n},\underline{n}}(\Gamma)}.
\]

Using 
\[
u_V(\lambda )(f)=F-R_V(\lambda )((V-\lambda)F),
\]
we obtain
\[
u_V(\lambda )(f)=\sum_{k\ge 1}\frac{1}{\lambda_V^k-\lambda }((\lambda_V^k-V)F,\phi_V^k)_{L^2(\Omega )}\phi_V^k.
\]
It is not hard to check that the above series converges uniformly in $L^2(\Omega )$, with respect to $\lambda$, in each compact set of $\rho (A)$. Consequently, $\lambda \in \rho(A_V) \mapsto  R_V(\lambda )F$ is holomorphic and, for $m\ge 0$,
\begin{equation}\label{series1}
\frac{d^m}{d\lambda ^m}u_V(\lambda )(f)=m!\sum_{k\ge 1}\frac{1}{\left(\lambda_V^k-\lambda \right)^{m+1}}((\lambda_V^k-V)F,\phi_V^k)_{L^2(\Omega )}\phi_V^k.
\end{equation}

Weyl's asymptotic formula \eqref{weyl} together with \eqref{efe} and the inequality
\begin{align*}
 \left|(F,\phi_V^k)_{L^2(\Omega )}\right| &\le \|F\|_{L^{n/2}(\Omega )}\|\phi_V^k\|_{L^{\overline{n}}(\Omega )}
 \\
 &\le C\|f\|_{W^{2-1\underline{n},\underline{n}}(\Gamma)}\|\phi_V^k\|_{W^{2,\underline{n}}(\Omega )}
\end{align*}
yield
\[
\left| \frac{1}{\left(\lambda_V^k-\lambda \right)^{m+1}}((\lambda_V^k-V)F,\phi_V^k)_{L^2(\Omega )}\right|\|\phi_V^k\|_{W^{2,\underline{n}}(\Omega )} \sim \frac{C}{k^{\frac{2(m-1)}{n}}},
\]
as $k\rightarrow \infty$. 

Therefore the series in \eqref{series1} is norm convergent in $W^{2,\underline{n}}(\Omega )$ and hence convergent in $W^{2,\underline{n}}(\Omega )$ (think to the completeness of this Banach space), provided that $m>n/2+1$. 

In consequence, in light of the continuity of the trace operator $\gamma_1: W^{2,\underline{n}}(\Omega )\rightarrow W^{1-1/\underline{n},\underline{n}}(\Omega )$, we get
\[
\frac{d^m}{d\lambda ^m}\Lambda_V(\lambda )f=m!\sum_{k\ge 1}\frac{1}{\left(\lambda_V^k-\lambda \right)^{m+1}}((\lambda_V^k-V)F,\phi_V^k)_{L^2(\Omega )}\psi_V^k.
\]

 But simple calculations based on Green's formula show that
 \[
 ((\lambda_k-V)F,\phi_V^k)_{L^2(\Omega )}=(f,\psi_V^k)_{L^2(\Gamma )}.
 \]
Thus 
\[
\frac{d^m}{d\lambda ^m}\Lambda_V(\lambda )f=m!\sum_{k\ge 1}\frac{1}{\left(\lambda_V^k-\lambda \right)^{m+1}}(f,\psi_V^k)_{L^2(\Gamma )}\psi_V^k.
\]
This is the expect identity. 
\end{proof}

\subsection{Uniqueness}

\begin{proposition}\label{proposition-op}
Let $V,\tilde{V}\in L^{n/2}(\Omega )$ and $0<\epsilon <(n-2)/n$. Then
\begin{equation}\label{opd}
\lim_{\mu \rightarrow \infty}\|\Lambda_V(-\mu^2)-\Lambda_{\tilde{V}}(-\mu ^2)\|_\epsilon =0,
\end{equation}
where $\|\cdot \|_\epsilon$ denotes the natural norm of $\mathscr{B}\left(W^{2-1/\underline{n},\underline{n}}(\Gamma ),W^{1-1/\underline{n}-\epsilon,\underline{n}}(\Gamma )\right)$.
\end{proposition}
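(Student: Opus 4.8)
The plan is to write the difference of DN maps through the difference of the corresponding solutions, and then extract the decay from the resolvent $R_V(-\mu^2)=(A_V+\mu^2)^{-1}$ as $\mu\to\infty$. For $\mu$ large, $-\mu^2$ lies below $\inf\sigma(A_V)$, $\inf\sigma(A_{\tilde V})$ and $\inf\sigma(A_0)$, so all three resolvents are defined. Fix $f\in W^{2-1/\underline{n},\underline{n}}(\Gamma)$ and set $u=u_V(-\mu^2)(f)$, $\tilde u=u_{\tilde V}(-\mu^2)(f)$ (Theorem \ref{theorem-exist}), $w=u-\tilde u$. Then $\gamma_0w=0$, $w$ solves $(-\Delta+V+\mu^2)w=(\tilde V-V)\tilde u$ in $\Omega$, and arguing as in Subsection \ref{sub-ts} and in the proof of Lemma \ref{integral-identity}, $\gamma_1w=\bigl(\Lambda_V(-\mu^2)-\Lambda_{\tilde V}(-\mu^2)\bigr)f$ while $w=R_V(-\mu^2)\bigl((\tilde V-V)\tilde u\bigr)$. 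Since $(\tilde V-V)\tilde u\in L^{\underline{n}}(\Omega)$ by H\"older's inequality and $H^1(\Omega)\hookrightarrow L^{\overline{n}}(\Omega)$, the whole statement reduces to two \emph{uniform in $\mu$} facts: (a) $\|(\tilde V-V)\tilde u\|_{L^{\underline{n}}(\Omega)}\le C\|f\|_{W^{2-1/\underline{n},\underline{n}}(\Gamma)}$, and (b) a smoothing-with-decay bound $\|R_V(-\mu^2)\|_{\mathscr{B}(L^{\underline{n}}(\Omega),\,W^{2-\epsilon,\underline{n}}(\Omega))}\le C\mu^{-\epsilon}$.

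For (b) I would first record, by the maximum principle, $\|R_0(-\mu^2)\|_{\mathscr{B}(L^\infty(\Omega))}\le\mu^{-2}$, hence by duality $\|R_0(-\mu^2)\|_{\mathscr{B}(L^1(\Omega))}\le\mu^{-2}$ and by Riesz--Thorin $\|R_0(-\mu^2)\|_{\mathscr{B}(L^p(\Omega))}\le\mu^{-2}$ for all $p$. Next, splitting $V=\tilde V_0+(V-\tilde V_0)$ with $\tilde V_0\in C_0^\infty(\Omega)$ and $\|V-\tilde V_0\|_{L^{n/2}(\Omega)}$ small exactly as in the construction of the spectral decomposition of $A_V$, one gets for $\mu$ large the coercivity bound $\mathfrak{a}(u,u)+\mu^2\|u\|_{L^2(\Omega)}^2\ge c\|u\|_{H^1(\Omega)}^2$ on $H_0^1(\Omega)$, whence $\|R_V(-\mu^2)\|_{\mathscr{B}(H^{-1}(\Omega),H_0^1(\Omega))}\le C$ uniformly. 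Then the second resolvent identity $R_V(-\mu^2)=R_0(-\mu^2)-R_0(-\mu^2)V R_V(-\mu^2)$ together with $\|VR_V(-\mu^2)g\|_{L^{\underline{n}}}\le\|V\|_{L^{n/2}}\|R_V(-\mu^2)g\|_{L^{\overline{n}}}\le C\|g\|_{L^{\underline{n}}}$ gives $\|R_V(-\mu^2)\|_{\mathscr{B}(L^{\underline{n}}(\Omega))}\le C\mu^{-2}$; feeding this into the $W^{2,p}$ a priori estimate for $-\Delta$ on the $C^{1,1}$ domain $\Omega$ (as in the proof of Theorem \ref{theorem-exist}) yields $\|R_V(-\mu^2)\|_{\mathscr{B}(L^{\underline{n}}(\Omega),W^{2,\underline{n}}(\Omega))}\le C$ uniformly. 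Interpolation of the last two bounds gives $\|R_V(-\mu^2)\|_{\mathscr{B}(L^{\underline{n}}(\Omega),W^{\sigma,\underline{n}}(\Omega))}\le C\mu^{-(2-\sigma)}$ for $0\le\sigma\le2$, in particular (b).

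For (a) I would write $\tilde u=v_\mu-R_{\tilde V}(-\mu^2)(\tilde V v_\mu)$ with $v_\mu=u_0(-\mu^2)(f)$, the choice $(-\Delta+\mu^2)v_\mu=0$ being what removes the dangerous factor $\mu^2$; the uniform resolvent bound above and Lemma \ref{lemma-mult} then give $\|\tilde u\|_{L^{\overline{n}}(\Omega)}\le C\|v_\mu\|_{L^{\overline{n}}(\Omega)}$, so it remains to bound $\|v_\mu\|_{L^{\overline{n}}(\Omega)}$ by $\|f\|_{W^{2-1/\underline{n},\underline{n}}(\Gamma)}$ uniformly. I would do this by duality: for $\phi\in L^{\underline{n}}(\Omega)$ put $\psi=R_0(-\mu^2)\phi\in W^{2,\underline{n}}(\Omega)\cap W_0^{1,\underline{n}}(\Omega)$, uniformly bounded in $W^{2,\underline{n}}(\Omega)$; Green's formula together with $(-\Delta+\mu^2)v_\mu=0$ and $\gamma_0\psi=0$ gives $\int_\Omega v_\mu\overline{\phi}\,dx=-\langle f,\gamma_1\overline{\psi}\rangle$, hence $\bigl|\int_\Omega v_\mu\overline\phi\,dx\bigr|\le\|f\|_{W^{2-1/\underline{n},\underline{n}}(\Gamma)}\|\gamma_1\psi\|_{(W^{2-1/\underline{n},\underline{n}}(\Gamma))'}$; since $(W^{2-1/\underline{n},\underline{n}}(\Gamma))'=W^{-(2-1/\underline{n}),\overline{n}}(\Gamma)$ and $\gamma_1\psi\in W^{1-1/\underline{n},\underline{n}}(\Gamma)\hookrightarrow W^{-(2-1/\underline{n}),\overline{n}}(\Gamma)$ (same Sobolev index on $\Gamma$, more smoothness on the left), this is $\le C\|\psi\|_{W^{2,\underline{n}}(\Omega)}\le C$, giving $\|v_\mu\|_{L^{\overline{n}}(\Omega)}\le C\|f\|_{W^{2-1/\underline{n},\underline{n}}(\Gamma)}$ and then (a).

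Combining, $\|w\|_{W^{2-\epsilon,\underline{n}}(\Omega)}=\|R_V(-\mu^2)((\tilde V-V)\tilde u)\|_{W^{2-\epsilon,\underline{n}}(\Omega)}\le C\mu^{-\epsilon}\|f\|_{W^{2-1/\underline{n},\underline{n}}(\Gamma)}$, and Theorem \ref{trace-theorem} (with $s=2-\epsilon$) yields $\|\gamma_1w\|_{W^{1-1/\underline{n}-\epsilon,\underline{n}}(\Gamma)}\le C\|w\|_{W^{2-\epsilon,\underline{n}}(\Omega)}\le C\mu^{-\epsilon}\|f\|_{W^{2-1/\underline{n},\underline{n}}(\Gamma)}$, i.e. $\|\Lambda_V(-\mu^2)-\Lambda_{\tilde V}(-\mu^2)\|_\epsilon\le C\mu^{-\epsilon}\to0$; for the exceptional $\epsilon$ with $s-1/\underline{n}\in\mathbb{N}$, or to stay inside the literal hypotheses of Theorem \ref{trace-theorem}, one first runs the argument with $\epsilon'$ slightly below $(n-2)/(2n)$ and then uses $W^{1-1/\underline{n}-\epsilon',\underline{n}}(\Gamma)\hookrightarrow W^{1-1/\underline{n}-\epsilon,\underline{n}}(\Gamma)$. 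The main obstacle is precisely the bookkeeping of $\mu$-uniformity — obtaining $\|R_V(-\mu^2)\|_{\mathscr{B}(L^{\underline{n}}(\Omega))}\le C\mu^{-2}$ and $\|v_\mu\|_{L^{\overline{n}}(\Omega)}\le C\|f\|$ with constants free of $\mu$, since Theorem \ref{theorem-exist} only gives $\lambda$-dependent constants — together with the routine but delicate handling of the trace and embedding statements at the borderline Sobolev exponents that appear.
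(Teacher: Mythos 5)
Your proof follows the same skeleton as the paper's: form $w=u_V(-\mu^2)(f)-u_{\tilde V}(-\mu^2)(f)$, observe that it solves $(-\Delta+V+\mu^2)w=(\tilde V-V)\tilde u$ with zero trace, bound the right-hand side in $L^{\underline{n}}(\Omega)$ uniformly in $\mu$ via a uniform $L^{\overline{n}}$ estimate on $\tilde u$, extract $\|w\|_{L^{\underline{n}}}\le C\mu^{-2}\|f\|$ and $\|w\|_{W^{2,\underline{n}}}\le C\|f\|$, interpolate to get $\mu^{-\epsilon}$ decay in $W^{2-\epsilon,\underline{n}}(\Omega)$, and conclude by the trace theorem. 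The difference is that the paper outsources the two uniform-in-$\mu$ ingredients to Lemmas \ref{lemma-re1} and \ref{lemma-re2}, quoted from \cite{Po} without proof, whereas you open those black boxes: your item (b) (maximum principle for $R_0(-\mu^2)$ on $L^\infty$, duality and Riesz--Thorin to get $\mu^{-2}$ on $L^p$, uniform coercivity of $\mathfrak{a}+\mu^2$, second resolvent identity, then the $W^{2,p}$ a priori estimate) is essentially a proof of Lemma \ref{lemma-re2}, and your item (a) (writing $\tilde u=v_\mu-R_{\tilde V}(-\mu^2)(\tilde V v_\mu)$ with $(-\Delta+\mu^2)v_\mu=0$ and bounding $\|v_\mu\|_{L^{\overline{n}}}$ by duality against $R_0(-\mu^2)\phi$) is a proof of Lemma \ref{lemma-re1}. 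Both arguments check out; the key points --- that the $\mu^2\|u\|_{L^{\underline{n}}}$ term in the elliptic estimate is absorbed precisely because of the $\mu^{-2}$ decay just established, and that the Sobolev indices of $W^{1-1/\underline{n},\underline{n}}(\Gamma)$ and $\bigl(W^{2-1/\underline{n},\underline{n}}(\Gamma)\bigr)'$ coincide --- are handled correctly. You are also more careful than the paper about the range of $\epsilon$: for $\epsilon\ge (n-2)/(2n)$ the exponent $s=2-\epsilon$ falls outside the literal hypotheses of Theorem \ref{trace-theorem}, and your device of running the argument at a smaller $\epsilon'$ and embedding into the weaker space is the right fix. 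In short: same route, but self-contained where the paper cites \cite{Po}.
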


The proof of this proposition is based on the following two lemmas. We refer to \cite{Po} for their proof.

\begin{lemma}\label{lemma-re1}
Let $V\in L^{n/2}(\Omega )$. There exist $C>0$ and $\lambda_0>0$, only depending on $n$ and $V$, so that, for any $f\in W^{2-1/\underline{n},\underline{n}}(\Gamma )$ and $\mu \in \mathbb{R}$ with $|\mu| \ge \lambda_0$, we have
\begin{equation}\label{re0}
\|u_V(-\mu ^2,f)\|_{L^{\overline{n}}(\Omega )}\le C\|f\|_{W^{2-1/\underline{n},\underline{n}}(\Gamma )}.
\end{equation}
\end{lemma}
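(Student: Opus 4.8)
\emph{Plan.} The idea is to reduce \eqref{re0} to uniform‑in‑$\mu$ bounds for the resolvent $R_V(-\mu^2)$ in the $L^{\underline n}(\Omega)$–$L^{\overline n}(\Omega)$ scale. Fix a bounded extension operator $\mathcal E$ as in the proof of Theorem \ref{theorem-exist}, so that $\gamma_0\mathcal Ef=f$ and $\|\mathcal Ef\|_{W^{2,\underline n}(\Omega)}\le c_\Omega\|f\|_{W^{2-1/\underline n,\underline n}(\Gamma)}$; since $2\underline n<n$ and $\frac1{\underline n}-\frac2n=\frac1{\overline n}$, the borderline Sobolev embedding $W^{2,\underline n}(\Omega)\hookrightarrow L^{\overline n}(\Omega)$ also gives $\|\mathcal Ef\|_{L^{\overline n}(\Omega)}+\|\Delta\mathcal Ef\|_{L^{\underline n}(\Omega)}\le c_\Omega\|f\|_{W^{2-1/\underline n,\underline n}(\Gamma)}$. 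Exactly as in the proof of Theorem \ref{theorem-exist} (with $\lambda=-\mu^2$), $u_V(-\mu^2)(f)=\mathcal Ef+R_V(-\mu^2)\big(\Delta\mathcal Ef-V\mathcal Ef-\mu^2\mathcal Ef\big)$, hence
\[
u_V(-\mu^2)(f)=\mathcal Ef+R_V(-\mu^2)(\Delta\mathcal Ef)-R_V(-\mu^2)(V\mathcal Ef)-\mu^2 R_V(-\mu^2)\mathcal Ef .
\]
Since $V\mathcal Ef\in L^{\underline n}(\Omega)$ with $\|V\mathcal Ef\|_{L^{\underline n}(\Omega)}\le\|V\|_{L^{n/2}(\Omega)}\|\mathcal Ef\|_{L^{\overline n}(\Omega)}$, the lemma will follow once I establish that, for all $\mu$ with $|\mu|\ge\lambda_0$ (a threshold depending only on $n$ and $V$),
\[
\|R_V(-\mu^2)\|_{\mathscr B(L^{\underline n}(\Omega),L^{\overline n}(\Omega))}\le C
\qquad\text{and}\qquad
\|R_V(-\mu^2)\|_{\mathscr B(L^{\overline n}(\Omega))}\le C|\mu|^{-2}.
\]

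\emph{The free resolvent.} Let $R_0(-\mu^2)=(-\Delta+\mu^2)^{-1}$ with homogeneous Dirichlet boundary condition. Comparing a solution of $(-\Delta+\mu^2)v=g$, $v_{|\Gamma}=0$, with the constant supersolution $\mu^{-2}\|g\|_{L^\infty(\Omega)}$ — admissible since $-\Delta+\mu^2$ obeys the weak maximum principle — gives $\|R_0(-\mu^2)\|_{\mathscr B(L^\infty(\Omega))}\le\mu^{-2}$; by duality $\|R_0(-\mu^2)\|_{\mathscr B(L^1(\Omega))}\le\mu^{-2}$, and by interpolation $\|R_0(-\mu^2)\|_{\mathscr B(L^p(\Omega))}\le\mu^{-2}$ for every $p\in[1,\infty]$. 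Applying this with $p=\underline n$ together with the identity $-\Delta R_0(-\mu^2)g=g-\mu^2R_0(-\mu^2)g$ and the global $W^{2,\underline n}$‑estimate of \cite[Theorem 9.14]{GT} (as already used in the proof of Theorem \ref{theorem-exist}) yields $\|R_0(-\mu^2)\|_{\mathscr B(L^{\underline n}(\Omega),W^{2,\underline n}(\Omega))}\le C$, whence $\|R_0(-\mu^2)\|_{\mathscr B(L^{\underline n}(\Omega),L^{\overline n}(\Omega))}\le C$, uniformly in $|\mu|\ge1$.

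\emph{From $R_0$ to $R_V$ and conclusion.} Since $A_V+\mu^2=(A_0+\mu^2)\big(I+R_0(-\mu^2)V\big)$, it is enough to invert $I+R_0(-\mu^2)V$ on $L^{\overline n}(\Omega)$ by a Neumann series, i.e.\ to make $\|R_0(-\mu^2)V\|_{\mathscr B(L^{\overline n}(\Omega))}\le 1/2$ for $|\mu|$ large. The critical integrability of $V$ is handled by the truncation device of Section 4: write $V=V_1+V_2$ with $V_1\in L^\infty(\Omega)$ and $\|V_2\|_{L^{n/2}(\Omega)}$ as small as desired. Then $V_2:L^{\overline n}(\Omega)\to L^{\underline n}(\Omega)$ has small norm, so $\|R_0(-\mu^2)V_2\|_{\mathscr B(L^{\overline n}(\Omega))}\le C\|V_2\|_{L^{n/2}(\Omega)}$ is small, while $\|R_0(-\mu^2)V_1\|_{\mathscr B(L^{\overline n}(\Omega))}\le C|\mu|^{-2}\|V_1\|_{L^\infty(\Omega)}$ is small for $|\mu|$ large. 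Consequently $I+R_0(-\mu^2)V$ is boundedly invertible on $L^{\overline n}(\Omega)$, and $R_V(-\mu^2)=\big(I+R_0(-\mu^2)V\big)^{-1}R_0(-\mu^2)$ on $L^{\overline n}(\Omega)$; one checks that this operator solves $(-\Delta+V+\mu^2)w=\cdot$, $w_{|\Gamma}=0$, hence coincides with the resolvent used in Theorem \ref{theorem-exist} (and in particular $-\mu^2\in\rho(A_V)$ as soon as, in addition, $\mu^2>-\lambda_V^1$). The two displayed bounds for $R_V(-\mu^2)$ then follow from those for $R_0(-\mu^2)$, and inserting them into the decomposition above — using $\mu^2\cdot|\mu|^{-2}=1$ for the last term — gives \eqref{re0} with $C$ depending only on $n$ and $V$.

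\emph{Main obstacle.} I expect the delicate point to be precisely the treatment of the critical potential $V\in L^{n/2}(\Omega)$: one cannot bound $\|R_0(-\mu^2)V\|_{\mathscr B(L^{\overline n}(\Omega))}$ by a small multiple of $\|V\|_{L^{n/2}(\Omega)}$ alone, so the argument genuinely requires the interplay between the truncation splitting of $V$ and the fact that the free Dirichlet resolvent $R_0(-\mu^2)$ is an honest $O(|\mu|^{-2})$ contraction on every $L^p(\Omega)$ — this last fact (obtained via the maximum principle) being the one ingredient beyond what is already available in the paper; the remaining pieces (the $W^{2,\underline n}$ extension, the embedding $W^{2,\underline n}\hookrightarrow L^{\overline n}$, and the interior/boundary elliptic estimates) are routine.
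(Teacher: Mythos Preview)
The paper does not actually supply a proof of this lemma: it states the two lemmas (this one and Lemma~\ref{lemma-re2}) and writes ``We refer to \cite{Po} for their proof.'' So there is no in-paper argument to compare against; I can only assess your proposal on its own merits.

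Your argument is sound. The decomposition via the extension $\mathcal E f$ reduces matters exactly to the two resolvent bounds you isolate, and your derivation of those bounds is correct: the maximum-principle comparison with the constant $\mu^{-2}\|g\|_{L^\infty(\Omega)}$ gives the uniform $L^p\to L^p$ contraction for $R_0(-\mu^2)$, the paper's own use of \cite[Theorem~9.14]{GT} in Theorem~\ref{theorem-exist} supplies the $L^{\underline n}\to W^{2,\underline n}$ bound (and hence $L^{\underline n}\to L^{\overline n}$ by Sobolev), and the truncation $V=V_1+V_2$ with $V_1\in L^\infty$ lets the Neumann series close on $L^{\overline n}$. The identification of $(I+R_0(-\mu^2)V)^{-1}R_0(-\mu^2)$ with $R_V(-\mu^2)$ follows, as you note, from uniqueness of the Dirichlet problem once one observes that the image lands in $H_0^1(\Omega)$ (since $g-Vw\in L^{\underline n}\subset H^{-1}$ and $R_0(-\mu^2):H^{-1}\to H_0^1$).

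It is worth remarking that your two resolvent bounds are in fact special cases of Lemma~\ref{lemma-re2} (take $p=\underline n$, $j=2$ together with the Sobolev embedding for the first, and $p=\overline n$, $j=0$ for the second). So what you have done is give a direct, elementary proof of precisely the fragments of Lemma~\ref{lemma-re2} needed here, via the maximum principle and a perturbation argument, rather than invoking the full $W^{j,p}$ scale. This is a perfectly acceptable and self-contained route.
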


\begin{lemma}\label{lemma-re2}
Let $p\ge \underline{n}$ and $V\in L^{n/2}(\Omega )$. There exist $C>0$ and $\lambda_0>0$, only depending on $n$ and $V$, so that, for any $u\in W^{2,p}(\Omega)\cap W_0^{1,p}(\Omega)$ and $\mu \in \mathbb{R}$ with $|\mu| \ge \lambda_0$, we have
\begin{equation}\label{re1}
\sum_{j=0}^2|\mu |^{2-j}\|u\|_{W^{j,p}(\Omega )}\le C\|(-\Delta +V+\mu ^2)u\|_{L^p(\Omega )}.
\end{equation}
\end{lemma}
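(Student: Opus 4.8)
The plan is to treat $-\Delta+V+\mu^2$ as a perturbation of the parameter-elliptic operator $-\Delta+\mu^2$ and to absorb the potential into the left-hand side once $|\mu|$ is large. The first and essential step is the model estimate with $V=0$: there are $C_0>0$ and $\lambda_1>0$, depending only on $n$, $p$ and $\Omega$, so that
\[
\sum_{j=0}^2|\mu|^{2-j}\|u\|_{W^{j,p}(\Omega )}\le C_0\|(-\Delta +\mu^2)u\|_{L^p(\Omega )},\quad u\in W^{2,p}(\Omega)\cap W_0^{1,p}(\Omega),\ |\mu|\ge \lambda_1.
\]
This is the Agmon--Douglis--Nirenberg a priori estimate for the Dirichlet problem of $-\Delta+\mu^2$, which is elliptic with parameter $\mu$. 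The interior part follows from the fact that the Fourier multipliers $\xi_i\xi_j(|\xi|^2+\mu^2)^{-1}$, $|\mu|\xi_i(|\xi|^2+\mu^2)^{-1}$ and $\mu^2(|\xi|^2+\mu^2)^{-1}$ satisfy the Mikhlin--H\"ormander condition uniformly in $\mu$, hence are bounded on $L^p(\mathbb{R}^n)$ with constants independent of $\mu$; the boundary part comes from the half-space model, in which $\mu$ enters as an additional cotangent direction verifying the Lopatinskii--Shapiro condition, followed by a partition of unity. This refines the parameter-free estimate \cite[Theorem 9.14]{GT} already used in the proof of Theorem \ref{theorem-exist}, and is the estimate for which \cite{Po} may be invoked.

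Next, writing $g=(-\Delta +V+\mu^2)u$, so that $(-\Delta+\mu^2)u=g-Vu$, the model estimate gives
\[
\sum_{j=0}^2|\mu|^{2-j}\|u\|_{W^{j,p}(\Omega )}\le C_0\|g\|_{L^p(\Omega )}+C_0\|Vu\|_{L^p(\Omega )},
\]
and it remains to absorb $C_0\|Vu\|_{L^p(\Omega)}$. Here I would use Lemma \ref{truncation-lemma} to split $V=V_1+V_2$ with $V_1\in L^\infty(\Omega)$ and $\|V_2\|_{L^{n/2}(\Omega)}$ as small as desired. The bounded part obeys $\|V_1u\|_{L^p(\Omega)}\le \|V_1\|_{L^\infty(\Omega)}\|u\|_{L^p(\Omega)}$, which equals $\|V_1\|_{L^\infty(\Omega)}|\mu|^{-2}$ times the $j=0$ term and is absorbed once $|\mu|^2\ge 4C_0\|V_1\|_{L^\infty(\Omega)}$. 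For the small part, H\"older's inequality together with the Sobolev embedding $W^{2,p}(\Omega)\hookrightarrow L^r(\Omega)$, $1/r=1/p-2/n$, gives $\|V_2u\|_{L^p(\Omega)}\le \|V_2\|_{L^{n/2}(\Omega)}\|u\|_{L^r(\Omega)}\le C_S\|V_2\|_{L^{n/2}(\Omega)}\|u\|_{W^{2,p}(\Omega)}$, which is absorbed into the $j=2$ term as soon as $C_0C_S\|V_2\|_{L^{n/2}(\Omega)}\le 1/4$. Fixing such a splitting and then choosing $\lambda_0\ge\lambda_1$ with $\lambda_0^2\ge 4C_0\|V_1\|_{L^\infty(\Omega)}$, the two contributions together cost at most half of the left-hand side, yielding \eqref{re1} with $C=2C_0$ and $\lambda_0$ depending only on $n$ and $V$ (through $\|V_1\|_{L^\infty(\Omega)}$ and the smallness threshold for $V_2$).

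The main obstacle is the uniform-in-$\mu$ model estimate of the first step: the whole point is that $C_0$ does not degenerate as $|\mu|\to\infty$, which is exactly parameter-ellipticity and forces one to track the powers of $|\mu|$ through the multiplier bounds and the boundary patching rather than relying on the plain $W^{2,p}$ elliptic estimate. A secondary technical point is the borderline behaviour of the Sobolev embedding used in the absorption: the exponent $r$ with $1/r=1/p-2/n$ is genuinely available only in the subcritical range $p<n/2$, which already covers the case $p=\underline{n}$ needed for the Borg--Levinson argument since $\underline{n}<n/2$ for $n\ge 3$. For $p\ge n/2$ one would instead replace the single H\"older--Sobolev step by a Gagliardo--Nirenberg interpolation of the form $\|V_2u\|_{L^p(\Omega)}\le \delta\|u\|_{W^{2,p}(\Omega)}+C_\delta\|u\|_{L^p(\Omega)}$, absorbing the two resulting terms into the $j=2$ and (for large $|\mu|$) the $j=0$ terms respectively.
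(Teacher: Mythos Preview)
The paper does not prove this lemma; it simply cites \cite{Po}. So there is no in-house argument to compare against, and your proposal has to be judged on its own merits.

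Your overall strategy is the standard one and is correct: establish the parameter-uniform model estimate for $-\Delta+\mu^2$ on $W^{2,p}\cap W_0^{1,p}$ (this is precisely the Agmon--Douglis--Nirenberg estimate for the Dirichlet Laplacian, and your sketch via Mikhlin multipliers plus half-space/Lopatinskii analysis is the right outline), and then absorb $Vu$ by splitting $V=V_1+V_2$ with $V_1\in L^\infty$ and $\|V_2\|_{L^{n/2}}$ small. The $V_1$-part is absorbed into the $j=0$ term for large $|\mu|$, and for $p<n/2$ the $V_2$-part is absorbed into the $j=2$ term via H\"older with exponent $n/2$ and the Sobolev embedding $W^{2,p}\hookrightarrow L^r$, $1/r=1/p-2/n$. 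This covers $p=\underline{n}$, which is the only case actually used in the paper (in Proposition~\ref{proposition-op}).

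The one soft spot is your handling of $p\ge n/2$. For $p=n/2$ the embedding is borderline, but one can still take $s$ slightly below $n/2$ in H\"older (using $\|V_2\|_{L^s}\le c_\Omega\|V_2\|_{L^{n/2}}$ on a bounded domain) and pair it with $W^{2,n/2}\hookrightarrow L^t$ for the corresponding finite $t$; so that case goes through with a minor tweak. For $p>n/2$, however, your proposed ``Gagliardo--Nirenberg'' inequality $\|V_2u\|_{L^p}\le \delta\|u\|_{W^{2,p}}+C_\delta\|u\|_{L^p}$ is not available in general: with $V_2$ merely in $L^{n/2}$ and $u\in W^{2,p}\hookrightarrow L^\infty$, one only gets $V_2u\in L^{n/2}$, and there is no H\"older pairing that lands in $L^p$. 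The inequality \eqref{re1} is still meaningful in that range (if the right-hand side is finite then $Vu\in L^p$ a posteriori), but your absorption argument as written does not close. Since the paper only invokes the lemma at $p=\underline{n}$, this gap is irrelevant for the downstream application; if you want the full range $p\ge\underline{n}$ as stated, you should either consult \cite{Po} for the intended argument or restrict the claim to $p\le n/2$.
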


\begin{proof}[Proof of Proposition \ref{proposition-op}.]

Pick $f\in W^{2-1/\underline{n},\underline{n}}(\Gamma )$. We have $-\mu ^2\in \rho(A_V)\cap \rho (A_{\tilde{V}})$, for $\mu ^2$ sufficiently large. 

Set $u=u_V(-\mu ^2 ,f)$ and $\tilde{u}=u_{\tilde{V}}(-\mu ^2,f)$. Then $w=u-\tilde{u}$ belongs to $W^{2,p}(\Omega)\cap W_0^{1,p}(\Omega)$ and satisfies
\begin{equation}\label{op0}
(-\Delta +V+\mu ^2)w=(\tilde{V}-V)\tilde{u}\quad \mbox{in}\; \Omega .
\end{equation}

We get by applying Lemma \ref{lemma-re1}
\begin{equation}\label{op1}
\|(\tilde{V}-V)\tilde{u}\|_{L^{\underline{n}}(\Omega)}\le \|V-\tilde{V}\|_{L^{n/2}(\Omega )}\|\tilde{u}\|_{L^{\overline{n}}(\Omega )}\le C\|f\|_{W^{2-1/\underline{n},\underline{n}}(\Gamma )}.
\end{equation}
As $w$ satisfies \eqref{op0}, we obtain in light of \eqref{op1} and \eqref{re1} in Lemma \ref{lemma-re2}
\[
\|w\|_{L^{\underline{n}}(\Omega )}\le C\mu ^{-2}\|f\|_{W^{2-1/\underline{n},\underline{n}}(\Gamma )}\quad \mbox{and}\quad \|w\|_{W^{2,\underline{n}}(\Omega )}\le C\|f\|_{W^{2-1/\underline{n},\underline{n}}(\Gamma )}.
\]
But from usual interpolation inequalities we have
\[
\|w\|_{W^{2-\epsilon,\underline{n}}(\Omega )}\le c_\Omega \|w\|_{L^{\underline{n}}(\Omega )}^{\epsilon/2}\|w\|_{W^{2,\underline{n}}(\Omega )}^{1-\epsilon/2}.
\]

Hence
\[
\|w\|_{W^{2-\epsilon,\underline{n}}(\Omega )}\le C\mu ^{-\epsilon}\|f\|_{W^{2-1/\underline{n},\underline{n}}(\Gamma )}.
\]

Using the continuity of the trace operator $\gamma_1$ (see Theorem \ref{trace-theorem}), we get
\[
\|\gamma_1w\|_{W^{1-\epsilon -1/\underline{n},\underline{n}}(\Gamma )}\le C\mu ^{-\epsilon}\|f\|_{W^{2-1/\underline{n},\underline{n}}(\Gamma )}.
\]
This inequality implies in a straightforward manner \eqref{opd}. 
\end{proof}

We are now ready to prove the following uniqueness result.

\begin{theorem}\label{isp}
Let $V,\tilde{V}\in L^{n/2}(\Omega )$. If 
\[
\lambda_V^k=\lambda_{\tilde{V}}^k \quad \mbox{and}\quad \psi_V^k=\psi_{\tilde{V}}^k,\quad k\ge 1,
\]
then $V=\tilde{V}$.
\end{theorem}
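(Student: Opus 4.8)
The plan is to combine Lemma~\ref{lemma-derivative}, Proposition~\ref{proposition-op} and the uniqueness Theorem~\ref{theorem-uniqueness}. First, since $\lambda_V^k=\lambda_{\tilde{V}}^k$ for all $k$, the ordered spectra of $A_V$ and $A_{\tilde{V}}$ coincide, hence $\rho(A_V)=\rho(A_{\tilde{V}})$; in particular the half-line $J=(-\infty,\lambda_V^1)$ is contained in both resolvent sets, so $-\mu^2\in J$ for $\mu$ large. Fix an integer $m>n/2+1$. By Lemma~\ref{lemma-derivative}, for every $f\in W^{2-1/\underline{n},\underline{n}}(\Gamma)$ and every $\lambda\in J$,
\[
\frac{d^m}{d\lambda^m}\Lambda_V(\lambda)f=m!\sum_{k\ge 1}\frac{1}{(\lambda_V^k-\lambda)^{m+1}}(f,\psi_V^k)_{L^2(\Gamma)}\,\psi_V^k ,
\]
and the right-hand side is unchanged when $V$ is replaced by $\tilde{V}$, because $\lambda_V^k=\lambda_{\tilde{V}}^k$ and $\psi_V^k=\psi_{\tilde{V}}^k$ for all $k$. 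Hence the map $g(\lambda)=\Lambda_V(\lambda)-\Lambda_{\tilde{V}}(\lambda)$, which is $m$ times differentiable on $J$ with values in $\mathscr{B}\big(W^{2-1/\underline{n},\underline{n}}(\Gamma),W^{1-1/\underline{n},\underline{n}}(\Gamma)\big)$, satisfies $g^{(m)}\equiv 0$ on $J$.

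A Banach-space valued function of a real variable that is $m$ times differentiable on an interval and whose $m$-th derivative vanishes there is a polynomial of degree at most $m-1$; thus $g(\lambda)=\sum_{j=0}^{m-1}\lambda^{j}A_j$ on $J$, with coefficients $A_j\in\mathscr{B}\big(W^{2-1/\underline{n},\underline{n}}(\Gamma),W^{1-1/\underline{n},\underline{n}}(\Gamma)\big)$. Now I would invoke Proposition~\ref{proposition-op}: fixing $0<\epsilon<(n-2)/n$ and using the continuous embedding $W^{1-1/\underline{n},\underline{n}}(\Gamma)\hookrightarrow W^{1-1/\underline{n}-\epsilon,\underline{n}}(\Gamma)$, we have $\|g(-\mu^2)\|_\epsilon\to 0$ as $\mu\to\infty$. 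Since $\mu\mapsto g(-\mu^2)=\sum_{j=0}^{m-1}(-1)^j\mu^{2j}A_j$ is a polynomial in $\mu$ with coefficients in the normed space $\mathscr{B}\big(W^{2-1/\underline{n},\underline{n}}(\Gamma),W^{1-1/\underline{n}-\epsilon,\underline{n}}(\Gamma)\big)$ that tends to $0$ at infinity, all the $A_j$ must vanish. Therefore $g\equiv 0$ on $J$, i.e. $\Lambda_V(\lambda)=\Lambda_{\tilde{V}}(\lambda)$ for every $\lambda\in J$.

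To finish, I would fix any real $\lambda_0\in J$, so that $\lambda_0\in\rho(A_V)\cap\rho(A_{\tilde{V}})$ and hence $V-\lambda_0,\tilde{V}-\lambda_0\in L_\ast^{n/2}(\Omega)$ (because $A_{V-\lambda_0}=A_V-\lambda_0$). By construction $\Lambda_V(\lambda_0)$ is assembled from the solutions of $(-\Delta+V-\lambda_0)u=0$; by the uniqueness statements of Theorems~\ref{theorembvp} and \ref{theorem-exist}, for $f$ in the dense subspace $C^\infty(\Gamma)$ of $H^{1/2}(\Gamma)$ the solution $u_V(\lambda_0)(f)\in W^{2,\underline{n}}(\Omega)\hookrightarrow H^1(\Omega)$ is exactly the $H^1$-solution furnished by Theorem~\ref{theorembvp} for the potential $V-\lambda_0$, so that $\Lambda_V(\lambda_0)f=\Lambda_{V-\lambda_0}f$ with $\Lambda_{V-\lambda_0}\in\mathscr{B}\big(H^{1/2}(\Gamma),H^{-1/2}(\Gamma)\big)$, and likewise for $\tilde{V}$. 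Thus $\Lambda_{V-\lambda_0}$ and $\Lambda_{\tilde{V}-\lambda_0}$ agree on a dense subspace of $H^{1/2}(\Gamma)$, hence coincide as bounded operators; Theorem~\ref{theorem-uniqueness} applied to $V-\lambda_0$ and $\tilde{V}-\lambda_0$ then gives $V-\lambda_0=\tilde{V}-\lambda_0$, that is $V=\tilde{V}$.

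The step I expect to be the main obstacle is the passage, via Proposition~\ref{proposition-op}, from the (immediate) coincidence of all $\lambda$-derivatives of order $\ge m$ of the Dirichlet-to-Neumann maps to the coincidence of the maps themselves at a single energy: this is precisely where the high-frequency decay $\|\Lambda_V(-\mu^2)-\Lambda_{\tilde{V}}(-\mu^2)\|_\epsilon\to 0$ is indispensable, since otherwise $g$ could be an arbitrary nonzero polynomial of degree $\le m-1$. The only other point requiring care is the routine identification of the energy-shifted operator $\Lambda_V(\lambda_0)$ acting on $W^{s,\underline{n}}$-type spaces with the $H^{1/2}(\Gamma)\to H^{-1/2}(\Gamma)$ Dirichlet-to-Neumann map $\Lambda_{V-\lambda_0}$, which is a density-and-uniqueness argument as sketched.
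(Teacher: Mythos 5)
Your proof is correct and follows essentially the same route as the paper: vanishing of the $m$-th $\lambda$-derivative via Lemma~\ref{lemma-derivative} makes $\Lambda_V(\lambda)-\Lambda_{\tilde V}(\lambda)$ a polynomial, the decay in Proposition~\ref{proposition-op} kills that polynomial, and Theorem~\ref{theorem-uniqueness} finishes. The only difference is that you spell out the final reduction — shifting to $V-\lambda_0$, $\tilde V-\lambda_0$ and identifying $\Lambda_V(\lambda_0)$ with the zero-energy map $\Lambda_{V-\lambda_0}$ on a dense subspace — which the paper leaves implicit; this is a welcome clarification rather than a new argument.
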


\begin{proof}
In light of \eqref{series0} in Lemma \ref{lemma-derivative}, we get that $\Lambda_V(\lambda )-\Lambda_{\tilde{V}}(\lambda )$ is a polynomial function in $\lambda$. This function is identically equal to zero by Proposition \ref{proposition-op} and hence
\[
\Lambda_V(\lambda )=\Lambda_{\tilde{V}}(\lambda ),\quad \lambda \in \rho(A_V)\cap \rho(A_{\tilde{V}}).
\]
We apply then Theorem \ref{theorem-uniqueness} in order to get $V=\tilde{V}$. 
\end{proof}

Theorem \ref{isp} is borrowed to \cite{Po} where the author considers also the case of partial spectral data. Namely, he proved the following theorem

\begin{theorem}\label{isp2}
Let $V,\tilde{V}\in L^p(\Omega )$ with $p=n/2$ for $n\ge 4$ and $p>n/2$ when $n=3$. If, for an arbitrary positive integer $k_0$,
\[
\lambda_V^k=\lambda_{\tilde{V}}^k \quad \mbox{and}\quad \psi_V^k=\psi_{\tilde{V}}^k,\quad k\ge k_0,
\]
then $V=\tilde{V}$.
\end{theorem}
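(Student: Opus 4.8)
The plan is to adapt the argument of Theorem \ref{isp} so that only the tail of the spectral data, starting from an arbitrary index $k_0$, is used. The key observation is that the finitely many missing modes $k=1,\dots,k_0-1$ contribute only a \emph{finite rank} perturbation to the relevant objects, and a finite rank operator that is simultaneously smoothing (or, more precisely, a polynomial-in-$\lambda$ term whose decay in $\lambda$ is controlled) must vanish. First I would revisit Lemma \ref{lemma-derivative}: for $m>n/2+1$ and $\lambda\in\rho(A_V)\cap\rho(A_{\tilde V})$,
\[
\frac{d^m}{d\lambda^m}\bigl(\Lambda_V(\lambda)-\Lambda_{\tilde V}(\lambda)\bigr)f
= m!\sum_{k\ge k_0}\left[\frac{(f,\psi_V^k)_{L^2(\Gamma)}}{(\lambda_V^k-\lambda)^{m+1}}\psi_V^k-\frac{(f,\psi_{\tilde V}^k)_{L^2(\Gamma)}}{(\lambda_{\tilde V}^k-\lambda)^{m+1}}\psi_{\tilde V}^k\right]
+ m!\sum_{k=1}^{k_0-1}\left[\frac{(f,\psi_V^k)_{L^2(\Gamma)}}{(\lambda_V^k-\lambda)^{m+1}}\psi_V^k-\frac{(f,\psi_{\tilde V}^k)_{L^2(\Gamma)}}{(\lambda_{\tilde V}^k-\lambda)^{m+1}}\psi_{\tilde V}^k\right].
\]
Under the hypothesis $\lambda_V^k=\lambda_{\tilde V}^k$ and $\psi_V^k=\psi_{\tilde V}^k$ for $k\ge k_0$, the first sum vanishes identically, leaving
\[
\frac{d^m}{d\lambda^m}\bigl(\Lambda_V(\lambda)-\Lambda_{\tilde V}(\lambda)\bigr)
= m!\sum_{k=1}^{k_0-1}\left[\frac{\,(\cdot\,,\psi_V^k)_{L^2(\Gamma)}\psi_V^k}{(\lambda_V^k-\lambda)^{m+1}}-\frac{\,(\cdot\,,\psi_{\tilde V}^k)_{L^2(\Gamma)}\psi_{\tilde V}^k}{(\lambda_{\tilde V}^k-\lambda)^{m+1}}\right],
\]
a rational function of $\lambda$ with poles only at the finitely many points $\lambda_V^1,\dots,\lambda_V^{k_0-1}$ and $\lambda_{\tilde V}^1,\dots,\lambda_{\tilde V}^{k_0-1}$, and with values in the finite-dimensional space spanned by $\psi_V^1,\dots,\psi_V^{k_0-1},\psi_{\tilde V}^1,\dots,\psi_{\tilde V}^{k_0-1}$.

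Next I would integrate $m$ times in $\lambda$: it follows that $\Lambda_V(\lambda)-\Lambda_{\tilde V}(\lambda)$ is itself, away from those finitely many poles, a rational function of $\lambda$ taking values in a fixed finite-dimensional operator space, and in particular along the negative real axis $\lambda=-\mu^2$ it is a rational function of $\mu^2$ with no poles for $\mu$ large. Now I invoke Proposition \ref{proposition-op}, which gives $\|\Lambda_V(-\mu^2)-\Lambda_{\tilde V}(-\mu^2)\|_\epsilon\to0$ as $\mu\to\infty$ for suitable $\epsilon$; the hypotheses there only require $V,\tilde V\in L^{n/2}(\Omega)$, which is covered by the present assumption $V,\tilde V\in L^p(\Omega)$, $p\ge n/2$ (and the case $n=3$, $p>n/2$, still embeds into $L^{n/2}$ on the bounded domain $\Omega$, so Proposition \ref{proposition-op} applies). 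A rational (hence meromorphic) operator-valued function on a neighborhood of $-\infty$ that tends to $0$ at infinity and has at most finitely many poles must be identically zero; therefore
\[
\Lambda_V(-\mu^2)=\Lambda_{\tilde V}(-\mu^2)
\]
for all large $\mu$, and then, by analytic continuation in $\lambda$ through $\rho(A_V)\cap\rho(A_{\tilde V})$, $\Lambda_V(\lambda)=\Lambda_{\tilde V}(\lambda)$ on the common resolvent set; in particular $\Lambda_V=\Lambda_V(0)=\Lambda_{\tilde V}(0)=\Lambda_{\tilde V}$ whenever $0$ lies in the common resolvent set. Finally I would apply Theorem \ref{theorem-uniqueness} (for $n\ge4$, $p=n/2$) to conclude $V=\tilde V$; for $n=3$ with $p>n/2$ the same CGO-based uniqueness argument of Section~4 applies verbatim since $L^p(\Omega)\hookrightarrow L^{n/2}(\Omega)$, or one quotes the manifold/version stated for that range.

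The main obstacle I anticipate is the passage from "the high derivative $\tfrac{d^m}{d\lambda^m}(\Lambda_V-\Lambda_{\tilde V})$ is a finite-rank rational function" to "$\Lambda_V-\Lambda_{\tilde V}$ itself is rational with only finitely many poles": one must check that the $m$-fold antiderivative does not introduce a polynomial part that escapes the decay-at-infinity conclusion, i.e. that the polynomial ambiguity produced by integration is killed precisely by Proposition \ref{proposition-op}. Concretely, $\Lambda_V(\lambda)-\Lambda_{\tilde V}(\lambda)=P(\lambda)+\Phi(\lambda)$ where $P$ is an operator-polynomial of degree $\le m-1$ with coefficients in the finite-dimensional span above and $\Phi$ is a proper rational function vanishing at $\infty$; since $\Lambda_V(-\mu^2)-\Lambda_{\tilde V}(-\mu^2)\to0$, necessarily $P\equiv 0$, and then $\Phi\equiv0$ by the same vanishing plus finiteness of its pole set. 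A second, more minor technical point is verifying that the norm $\|\cdot\|_\epsilon$ used in Proposition \ref{proposition-op} is compatible with the finite-rank operators $(\cdot,\psi_V^k)_{L^2(\Gamma)}\psi_V^k$, i.e. that each such rank-one operator is bounded $W^{2-1/\underline n,\underline n}(\Gamma)\to W^{1-1/\underline n-\epsilon,\underline n}(\Gamma)$; this follows from $\psi_V^k=\gamma_1\phi_V^k\in W^{1-1/\underline n,\underline n}(\Gamma)$ together with \eqref{efe} and the trace theorem, so the argument closes.
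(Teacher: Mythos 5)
Your reduction of $\frac{d^m}{d\lambda^m}\bigl(\Lambda_V(\lambda)-\Lambda_{\tilde V}(\lambda)\bigr)$ to the finite sum over $k<k_0$ is correct, and so is the decomposition $\Lambda_V(\lambda)-\Lambda_{\tilde V}(\lambda)=P(\lambda)+\Phi(\lambda)$ with $P$ an operator polynomial, which Proposition \ref{proposition-op} does kill, and
\[
\Phi(\lambda)=\sum_{k=1}^{k_0-1}\Bigl[\frac{(\cdot\,,\psi_V^k)_{L^2(\Gamma)}\,\psi_V^k}{\lambda_V^k-\lambda}-\frac{(\cdot\,,\psi_{\tilde V}^k)_{L^2(\Gamma)}\,\psi_{\tilde V}^k}{\lambda_{\tilde V}^k-\lambda}\Bigr].
\]
The final step, however, fails: a proper rational function vanishing at infinity with finitely many poles is \emph{not} forced to be identically zero ($1/(\lambda_V^1-\lambda)$ is a counterexample), and the decay furnished by Proposition \ref{proposition-op} gives no information beyond what $\Phi$ already satisfies automatically. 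To conclude $\Phi\equiv 0$ you would need the residues at the low eigenvalues to cancel, i.e.\ essentially that $\lambda_V^k=\lambda_{\tilde V}^k$ and $(\cdot\,,\psi_V^k)\psi_V^k=(\cdot\,,\psi_{\tilde V}^k)\psi_{\tilde V}^k$ also for $k<k_0$ --- which is not among the hypotheses and is morally equivalent to what is to be proved (note that $\psi_V^k=\gamma_1\phi_V^k\neq 0$ by unique continuation, so the individual rank-one residues do not vanish). Your argument therefore shows at best that $\Lambda_V(\lambda)-\Lambda_{\tilde V}(\lambda)$ \emph{equals} the explicit finite-rank rational function $\Phi$; it cannot deliver equality of the DN maps at any fixed $\lambda$, and the reduction to Theorem \ref{theorem-uniqueness} collapses.

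This is exactly why the paper's proof of Theorem \ref{isp2} takes a different route and never upgrades the asymptotic statement to an identity. It only establishes the decay \eqref{co1}, namely $\|\Lambda_V(\lambda)f-\Lambda_{\tilde V}(\lambda)f\|_{L^p(\Gamma)}\to 0$ along the region $D_s$ where $|\Im\lambda|\to\infty$ (a decay that the leftover finite sum $\Phi$ does satisfy), and then extracts $\mathscr{F}((V-\tilde V)\chi_\Omega)(\xi)$ directly from the asymptotics of the quantities $\mathcal{S}_V(\lambda,\theta,\omega)=\langle\Lambda_V(\lambda)\mathfrak{e}_{\lambda,\omega},\overline{\mathfrak{e}_{\lambda,-\theta}}\rangle$ via identity \eqref{co2}, choosing $\theta_k,\omega_k,\lambda_k$ so that $\sqrt{\lambda_k}(\theta_k-\omega_k)\to\xi$ with the exponentials uniformly bounded, and disposing of the resolvent remainder $\langle R_V(\lambda)(V\mathfrak{e}_{\lambda,\omega}),V\overline{\mathfrak{e}_{\lambda,-\theta}}\rangle$ by a Riesz--Thorin interpolation argument. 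To repair your proof you would have to replace your last step by such an asymptotic identification of the Fourier transform, not by an exact-equality-of-DN-maps argument.
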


\begin{proof}[Sketch of the proof]

We pick $V,\tilde{V}\in L^p(\Omega )$ satisfying the assumptions of Theorem \ref{isp2} and we define
\[
D_s=\mathbb{C}\setminus\left(\left\{ \lambda \in \mathbb{C};\; \Re \lambda \ge \frac{s}{2}(\Im \lambda ) ^2-1\right\}\cup \sigma (A_V)\right).
\]

We prove, where $f\in W^{2-1/p,p}(\Gamma )$,
\begin{equation}\label{co1}
\lim_{\lambda \in D_s, |\lambda |\rightarrow \infty}\|\Lambda_V(\lambda)f-\Lambda_{\tilde{V}}(\lambda )f\|_{L^p(\Gamma )}=0.
\end{equation}
 
For $\lambda \in \mathbb{C}\setminus [0,\infty )$ and $\omega \in \mathbb{S}^{n-1}$, we set
\[
\mathfrak{e}_{\omega ,\lambda}(x)=e^{i\sqrt{\lambda}\, x\cdot \omega}.
\]

We define, for $\lambda \in \mathbb{C}\setminus [0,\infty )$ and $\theta,\omega \in \mathbb{S}^{n-1}$,
\begin{align*}
\mathcal{S}_V(\lambda ,\theta ,\omega )&=\int_\Gamma \Lambda_V(\lambda)(\mathfrak{e}_{\lambda ,\omega})\mathfrak{e}_{\lambda ,-\theta}dS(x)
\\
&=\langle \Lambda_V(\lambda)\mathfrak{e}_{\lambda ,\omega},\overline{\mathfrak{e}_{\lambda ,-\theta} }\rangle.
\end{align*}
We define similarly $\mathcal{S}_{\tilde{V}}(\lambda ,\theta ,\omega )$.

One can establish in a straightforward manner the identity
\begin{align}
&\mathcal{S}_V(\lambda ,\theta ,\omega )=\int_\Omega e^{-i\sqrt{\lambda}(\theta -\omega)\cdot x}V(x)dx \label{co2}
\\
&\quad -\frac{\lambda}{2}|\theta -\omega|^2\int_\Omega e^{-i\sqrt{\lambda}(\theta -\omega)\cdot x}dx-\langle R_V(\lambda)(V\mathfrak{e}_{\lambda ,\omega}),V\overline{\mathfrak{e}_{\lambda ,-\theta} }\rangle .\nonumber
\end{align}

Fix $0\ne \xi \in \mathbb{R}^n$ and let $\eta \in \mathbb{S}^{n-1}$ satisfying $\eta \bot \xi$. For any integer $k\ge 1$, define
\[
\theta_k=c_k\eta +\frac{1}{2k}\xi,\quad \omega_k=c_k\eta -\frac{1}{2k}\xi,\quad \sqrt{\lambda_k}= k+i,
\]
where $c_k=\left( 1-|\xi|^2/(4k^2)\right)^{1/2}$.

Then $\theta_k, \omega_k\in \mathbb{S}^{n-1}$,
\begin{align*}
&\sqrt{\lambda_k}(\theta_k-\omega_k)\rightarrow \xi \quad \mbox{as}\; k\rightarrow \infty ,
\\
&\Im \lambda_k \rightarrow \infty \quad \mbox{as}\; k\rightarrow \infty ,
\\
&\sup_k\left| \Im \left( \sqrt{\lambda_k}\, \theta_k\right)\right|,\; \sup_k\left| \Im \left( \sqrt{\lambda_k}\, \omega_k\right)\right| <\infty .
\end{align*}

Noting that  $\sup_k\| \mathfrak{e}_{\lambda_k,\omega_k}\|_{L^\infty (\Omega)}<\infty$ and $\sup_k\| \mathfrak{e}_{\lambda_k,-\theta_k}\|_{L^\infty (\Omega)}<\infty$, \eqref{co1} together with H\"older's inequality yield
\begin{equation}\label{co3}
\lim_{k\rightarrow \infty}\left[ \mathcal{S}_V(\lambda_k ,\theta_k ,\omega_k )-\mathcal{S}_{\tilde{V}}(\lambda_k ,\theta_k ,\omega_k ) \right]=0.
\end{equation}

On the hand, an argument based on Riesz-Thorin's interpolation theorem gives
\[
\lim_{k\rightarrow \infty}\langle R_V(\lambda)(V\mathfrak{e}_{\lambda_k ,\omega_k}),V\overline{\mathfrak{e}_{\lambda_k ,-\theta_k} }\rangle=0.
\]
The same result holds when $V$ is substituted by $\tilde{V}$.

In light of identity \eqref{co2}, we then find
\[
\lim_{k\rightarrow \infty}\left[ \mathcal{S}_V(\lambda_k ,\theta_k ,\omega_k )-\mathcal{S}_{\tilde{V}}(\lambda_k ,\theta_k ,\omega_k ) \right]=\int_\Omega e^{-i\xi \cdot x}(V-\tilde{V})dx,\quad \xi \in \mathbb{R}^n.
\]
Comparing with \eqref{co3}, we end up getting $\mathscr{F}((V-\tilde{V})\chi_\Omega)=0$ and hence $V=\tilde{V}$. 
\end{proof}


\begin{thebibliography}{99}

\bibitem{BU} A. L. Bukhgeim ang G. Uhlmann,
\newblock Recovering a potential from partial Cauchy data,
\newblock Commun. Partial Different. Equat. 27 (2002), 653-658.

\bibitem{CKS} M. Choulli, Y. Kian and E. Soccorsi, 
\newblock Double logarithmic stability in the identification of a scalar potential by partial elliptic Dirichlet-to-Neumann map, 
\newblock Bulletin SUSU MMCS 8 (3) (2015) 78-94.


\bibitem{DKSU} D. Dos Santos Ferreira, C. E. Kenig, M. Salo and G. Uhlmann, 
\newblock Limiting Carleman weights and anisotropic inverse problems,
\newblock  Invent. Math. 178 (1) (2009), 119-171.

\bibitem{DKS} D. Dos Santos Ferreira, C. E. Kenig and M. Salo, Determining an unbounded potential from Cauchy data in admissible geometries,
\newblock  Comm. Partial Different. Equat. 38 (1) (2013),  50-68. 

\bibitem{GT}  D. Gilbarg and N. S. Trudinger, 
\newblock Elliptic partial differential equations of second order, 
\newblock 2nd ed., Springer-Verlag, Berlin, 1983.

 \bibitem{Gr} P. Grisvard, 
 \newblock Elliptic problems in nonsmooth domains,
 \newblock Monographs and Studies in Mathematics, 24. Pitman, Boston, MA, 1985. 
 
 \bibitem{WM} W. McLean,
 \newblock Strongly elliptic systems and boundary integral equations, 
 \newblock Cambridge University Press, Cambridge (2000).
 
\bibitem{Po} V. Pohjola, 
\newblock Multidimensional Borg-Levinson theorems for unbounded potentials
\newblock  Asymptot. Anal. 110 (3-4) (2018),  203-226.


\end{thebibliography}
\end{document}